\documentclass[11pt]{amsart}

\usepackage{xcolor} 
\usepackage{fancybox}
\usepackage{dashbox}

  \usepackage{float}

  \usepackage{pgffor}
  \usepackage{tikz}
  \usetikzlibrary{calc,3d,arrows,positioning,shapes.misc}

  \usepackage[english]{babel}
  \usepackage{amsthm}
  \usepackage{amsmath}
  \usepackage{amssymb}
  \usepackage[latin1]{inputenc}
  \usepackage{setspace}
  \usepackage{enumerate}
  \usepackage{stmaryrd}

  \usepackage[colorlinks=true,linkcolor=black,citecolor=black,filecolor=black,urlcolor=black,menucolor=black]{hyperref}

  \usepackage{array}
  \usepackage{graphicx}
  \usepackage[babel]{csquotes}
  \usepackage{geometry}
  \usepackage{bbm}
  \usepackage[all]{xy}
  \usepackage{mathrsfs}

\usepackage{mathtools}

  \theoremstyle{plain}
  \newtheorem{theorem}{Theorem}
  \newtheorem*{theorem*}{Theorem}
  \newtheorem{proposition}{Proposition}
  \newtheorem{corollary}{Corollary}
  
  \newtheorem{lemma}{Lemma}
  
  \theoremstyle{definition}
  \newtheorem{definition}{Definition}

  \newtheorem{remark}{Remark}
  \newtheorem*{notation*}{Notation}

  \newcommand{\step}[1]{\par\medskip\par\noindent\textit{#1}} 
	\allowdisplaybreaks[3]

  \def \H{\mathcal{H}}

  \def \R{\mathbb{R}}
  
	\def \S{\mathbb{S}}
   
  \newcommand {\trace} {\mathop \textup{tr}}

  \newcommand {\supp} {\mathop \mathrm{supp}}

  \newcommand {\eps} {\varepsilon}

  \newcommand {\uu} {\mathbf{u}}
  \newcommand {\ee} {\mathbf{e}}
  \newcommand {\ff} {\mathbf{f}}
  \newcommand {\torus} {\mathbb{T}^d}
  \newcommand {\cdotdot} {\mathbin{:}}
 
\newcommand{\dd}{\mathrm{d}}

\usepackage[
backend=biber,
style=numeric-comp,
giveninits=true,
url=false,
maxbibnames=4
]{biblatex}
\AtEveryBibitem{\clearfield{issn}}
\AtEveryBibitem{\clearfield{isbn}}
\renewbibmacro{in:}{%
  \ifentrytype{article}
    {}
    {\printtext{\bibstring{in}\intitlepunct}}}

  \title[Convergence of Allen--Cahn system to multiphase mean curvature flow]{Convergence of a vector-valued Allen--Cahn system to Brakke's multiphase mean curvature flow}

\author{Tim Laux}
\address{(Tim Laux) Institute for Mathematics and Interdisciplinary Center for Scientific Computing\\
         Heidelberg University\\
         Im Neuenheimer Feld 205\\
         D-69120 Heidelberg\\
         Germany}
\email{tim.laux@math.uni-heidelberg.de}

\author{Keisuke Takasao}
\address{(Keisuke Takasao) Department  of Mathematics\\
Graduate School of Science\\
        Kyoto University\\
        Kitashirakawa-Oiwakecho Sakyo\\
        Kyoto 606-8502\\
        Japan}
\email{k.takasao@math.kyoto-u.ac.jp}

    \date{\today}

\begin{document}

    \begin{abstract}
      Since Ilmanen's pioneering work~[J. Differential Geom. 38, 417-461, (1993)] it has been a major open problem to understand the sharp-interface limit of systems of coupled Allen--Cahn equations.
      We prove---for the first time---a global, unconditional convergence result for such a coupled system and show that the limit is a multiphase mean curvature flow in the sense of Brakke. 

      \medskip

  \noindent \textbf{Keywords:} Mean curvature flow; Allen--Cahn equation; sharp-interface limit; varifolds; Brakke flow; monotonicity formula

  \medskip

\noindent \textbf{Mathematical Subject Classification (MSC2020)}: 
53E10 (primary); 
49Q20; 
35A15; 
35B25; 
35D30 (secondary) 
  \end{abstract}
\maketitle

\tableofcontents

\section{Introduction}
We study the singular limit $\eps\to0$ of solutions $\uu_\eps$ to the vector-valued Allen--Cahn equation
\begin{align}\label{eq:allen cahn loosely}
  \eps \partial_t \uu_\eps = \eps \Delta \uu_\eps - \partial_{\uu} F_\eps(\uu_\eps).
\end{align}
Here, $F_\eps $ is a multi-well potential that in the limit $\eps\to0$ forces $\uu_\eps$ to take on only finitely many values. 
This is the $L^2$-gradient flow (on a slow time scale $\sim \eps$) of the Ginzburg--Landau (or Cahn--Hilliard) energy
\begin{align}\label{eq:Eeps loosely}
  E_\eps(\uu) 
  &= 
  \int \Big(\frac{\eps}2 |\nabla \uu|^2 + F_\eps (\uu) \Big)\, \dd x.
\end{align}
The present work proves for the first time in the vector-valued setting, that---globally in time and unconditionally---the solution of our Allen--Cahn system~\eqref{eq:allen cahn loosely} converges to a multiphase mean curvature flow. 
This draws a connection between a system of parabolic partial differential equations and one of the most studied geometric flows.

\medskip

More precisely, the solutions are pre-compact and any limit $\uu=\lim_{\eps\to0} \uu_\eps$ only takes values in the energy wells of $F_\eps$, therefore providing a natural partition $(\Omega_1,\ldots,\Omega_n)$ of the spatial domain, cf.\ Figure~\ref{fig:partition}. 
Second, the network of interfaces $\Sigma_{ij}=\partial \Omega_i \cap \partial \Omega_j$ $(1\leq i,j\leq n)$ that separate these domains moves by multiphase mean curvature flow. 
Loosely speaking, this means that the normal velocity is equal to the negative mean curvature, i.e., 
\begin{equation}\label{eq:MCF}
   V_{ij} = - H_{ij}\quad \text{on each interface } \Sigma_{ij},
\end{equation}
and triple junctions are in local equilibrium, i.e.,
\begin{equation}\label{eq:Herring}
  \nu_{ij}+\nu_{jk}+\nu_{ki} =0 \quad \text{along each triple junction } \Sigma_{ij}\cap \Sigma_{jk}\cap \Sigma_{ki}.
\end{equation}
\begin{figure}
  \includegraphics[width=0.4\linewidth]{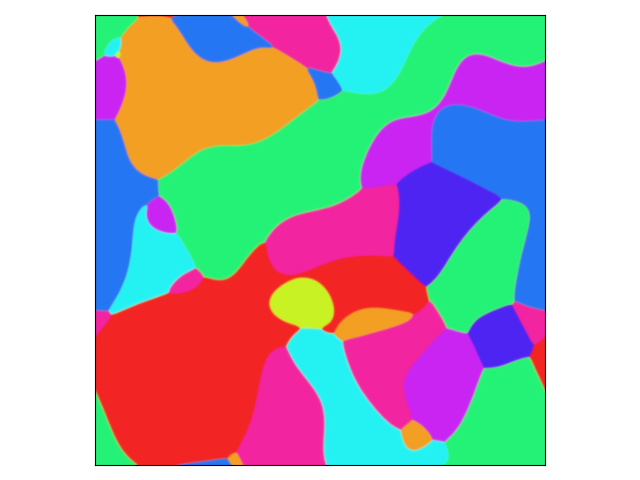}%
  \includegraphics[width=0.4\linewidth]{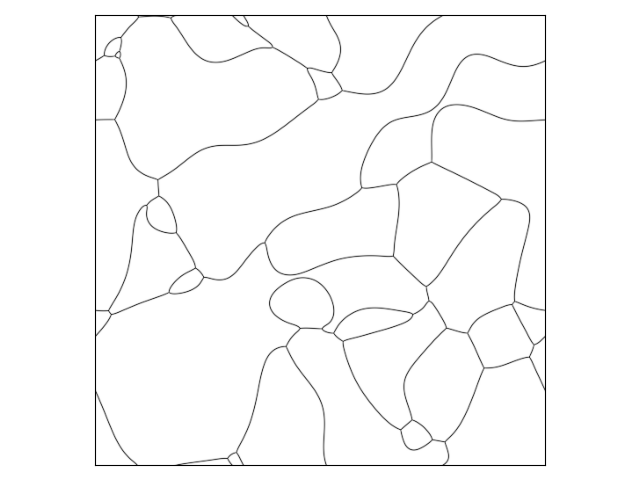}
  \label{fig:partition}
\caption{A partition: Indication of the phases, different values of $\uu$ correspond to different colors (left); the corresponding network of interfaces (right).}
\end{figure}%
Here, $V_{ij}$, $H_{ij}$, and $\nu_{ij}$ denote the velocity, mean curvature and unit normal field of the interface $\Sigma_{ij}$. 
Classical solutions to the curve shortening flow with junctions have been extensively studied over the years, starting with the works~\cite{BronsardReitich,KinderlehrerLiu,MantegazzaNovagaTortorelli}; see~\cite{IlmanenNevesSchulze,MantegazzaNovagaPludaSchulze} and the references therein.
However, since multiphase mean curvature flow generically undergoes topological changes and develops singularities in finite time, we need to resort to a weak formulation of~\eqref{eq:MCF}--\eqref{eq:Herring} in the context of geometric measure theory. 
Our main result, Theorem~\ref{thm:ACtoBrakke}, can be informally stated as follows.
\begin{theorem*}
  For well-prepared initial data and suitable multi-well potentials, the solutions of the Allen--Cahn equation~\eqref{eq:allen cahn loosely} converge to a weak solution of multiphase mean curvature flow~\eqref{eq:MCF}--\eqref{eq:Herring} in the sense of Brakke.
\end{theorem*}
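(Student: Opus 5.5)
Following Ilmanen's strategy for the scalar case, I will pass to the limit in the diffuse-interface energy measures $\mu_t^\eps = \big(\frac{\eps}{2}|\nabla \uu_\eps|^2 + \frac1\eps F_\eps(\uu_\eps)\big)\,\dd x$ (suitably normalized) and show that the limit is a Brakke flow carrying the multiphase partition. For the multi-well potential $F_\eps$, I will choose a specific structure that makes the system behave like $n$ decoupled scalar problems near interfaces. Concretely, I take wells at the vertices of a simplex, with $F_\eps$ built from one-dimensional double-well profiles along the edges, and with a penalization (scaled in $\eps$) that forces $\uu_\eps$ to stay close to the union of edges. With this choice every two-phase interface has the standard one-dimensional optimal profile, so the surface tensions are equal and \eqref{eq:Herring} holds with $120^\circ$ angles.

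\textbf{Step 1: a priori estimates and compactness.} The energy dissipation identity
\[
\frac{\dd}{\dd t}E_\eps(\uu_\eps)=-\int \eps|\partial_t\uu_\eps|^2\,\dd x
\]
gives uniform mass bounds and $L^2$ control of the velocity. Composing $\uu_\eps$ with the phase-field functions $\psi_i(\uu)$ (geodesic distance to the wells, Modica--Mortola style), I get $BV$ bounds in space and $W^{1,1}$ bounds in time. This yields compactness of $\psi_i(\uu_\eps)$ in $L^1$ and a limit partition $(\Omega_1,\ldots,\Omega_n)$ of finite perimeter. Along a subsequence, $\mu^\eps_t\to\mu_t$ for all $t$, using the standard Brakke-type semi-decreasing argument.

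\textbf{Step 2: vanishing discrepancy.} The crucial analytic input is the vanishing of the discrepancy measure $\xi_\eps=\frac\eps2|\nabla\uu_\eps|^2-\frac1\eps F_\eps(\uu_\eps)$. For this I need:
\begin{enumerate}[(a)]
\item a maximum-principle bound $\xi_\eps\le C$ (or $\le\eps^{-\beta}$), obtained from a vector-valued analogue of Ilmanen's parabolic estimate. This requires the potential to be convex transversally to the edges, so that a suitable auxiliary function of $\uu_\eps$ is a subsolution;
\item Huisken-type monotonicity for Gaussian-weighted $\mu^\eps_t$ with the error term $\int\xi_\eps\rho$;
\item the conclusion, via density bounds and the positive part of $\xi_\eps$, that $|\xi_\eps|\to0$.
\end{enumerate}
I expect (a) to be the main obstacle. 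The vector structure destroys the scalar comparison argument, since $|\nabla\uu|^2$ mixes components and the junction regions are not one-dimensional. I would first try to show that $\uu_\eps$ stays in an $O(\eps^\alpha)$-neighborhood of the edge network, by a maximum principle applied to the transversal distance function using the penalization in $F_\eps$. Then I would run the scalar computation in a tubular coordinate, treating the junction cores as error terms controlled by the penalization.

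\textbf{Step 3: limit equations.} With $\xi_\eps\to0$, the first variation of $\mu^\eps_t$ converges, and Allard's and Ilmanen's rectifiability arguments give that $\mu_t$ is integral, with mean curvature $H\in L^2$. Passing to the limit in the localized dissipation inequality
\[
\frac{\dd}{\dd t}\int\phi\,\dd\mu^\eps_t\le-\int\phi\,\eps\Big|\Delta\uu_\eps-\tfrac{1}{\eps^2}\partial_\uu F_\eps\Big|^2+\ldots
\]
and using lower semicontinuity yields Brakke's inequality.

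\textbf{Step 4: compatibility with the partition.} Finally, I show that $\mu_t\ge\sum_{i}\frac12\H^{d-1}\llcorner\partial^*\Omega_i(t)$ and that $\partial^*\Omega_i$ move with $L^2$ normal velocity given by the Brakke mean curvature. The first claim follows by lower semicontinuity of $|\nabla\psi_i(\uu_\eps)|\le\sqrt{2F}|\nabla\uu|$. The second follows by passing to the limit in $\partial_t\psi_i(\uu_\eps)$ via $\int|\partial_t\psi_i|\le\int\frac12(\eps|\partial_t\uu|^2+\eps^{-1}2F)$ and a Radon--Nikodym argument. This gives the multiphase Brakke flow of the theorem.
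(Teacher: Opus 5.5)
\textbf{The genuine gap is Step~2(a).} This step is the heart of the problem, and your proposed route to it cannot work. You want a maximum principle that confines $\uu_\eps$ to an $O(\eps^\alpha)$-neighborhood $U$ of the edge network, followed by a scalar Ilmanen computation in tubular coordinates.

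This confinement is false as soon as the flow has a triple junction. Take a small circle linking a junction of $\Omega_1,\Omega_2,\Omega_3$, and suppose each $\Sigma_{ij}$ is realized by a direct $\ee_i$--$\ee_j$ transition, as it must be to carry the correct tension. Along that circle, $\uu_\eps$ traverses the cycle $\ee_1\to\ee_2\to\ee_3\to\ee_1$ of the $1$-skeleton. This cycle is not contractible in the thin neighborhood $U$, since $U$ retracts onto the $1$-skeleton, a graph. So the continuous map $\uu_\eps(\cdot,t)$ must leave $U$ on the disk bounded by the circle, i.e. inside every junction core.

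Hence no global maximum principle can give this bound. If you excise the cores instead, you lose the maximum principle altogether: you have no boundary information on these unknown, moving sets, and they are exactly where the discrepancy is hardest to control. A pointwise discrepancy bound for a strongly coupled vectorial system is precisely what is not known.

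\textbf{The missing idea is to weaken the coupling rather than strengthen the confinement.} The paper takes
$F_\eps(\uu)=\frac1\eps\sum_i W(u_i)+\frac1{2\eps^\alpha}\big(\sigma-\sum_i\phi(u_i)\big)^2$ with $\alpha\in(0,1)$. Then:
\begin{itemize}
\item Each $u_i$ solves a scalar Allen--Cahn equation with forcing $v_\eps=-\eps\partial_t u_i+\eps^{-\alpha}\sqrt{2W(u_i)}\big(\sigma-\sum_j\phi(u_j)\big)$, and $\eps\int_0^T\!\int v_\eps^2\lesssim\eps+\eps^{2-2\alpha}\to0$.
\item Since the coupling term is nonnegative, the vectorial discrepancy is bounded above by $\sum_i\big(\frac\eps2|\nabla u_i|^2-\frac1\eps W(u_i)\big)$.
\end{itemize}
Purely scalar tools then apply component by component. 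Chen's integral estimate gives $\int_0^T\!\int\xi^\eps_+\to0$. A Hutchinson--Tonegawa maximum principle gives $\frac\eps2|\nabla u_i|^2-\frac1\eps W(u_i)\le C\eps^{-(1+\alpha)/2}$.

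\textbf{Step~2(c) is not a formality either.} Even granting $\xi_\eps\le C\eps^{-\beta}$ with $\beta<1$, inserting this into the monotonicity formula only bounds the error $\int_0^s\frac1{2(s-t)}\int\rho_{(y,s)}\,\dd\xi^\eps_{t,+}\,\dd t$ by $C\eps^{-\beta}\sqrt s$, which blows up.

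The paper needs the localized estimate $\int_{B_r(y)}\xi_+\le C\eps^{\beta'-\beta}r^{d-1}$ for $r\ge\eps^{\beta'}$, where $\beta'=\frac{1+\beta}2$. It has two ingredients:
\begin{itemize}
\item a volume bound on the transition region, from lower density bounds on time intervals of length $\sim\eps^{2\beta'}$ combined with the upper density bound;
\item exponential decay of $\int\varphi^2|\nabla\uu|^2$ where every $u_i$ lies near $0$ or $1$, using $W''>0$ there and the fact that the coupling contributes only $O(\eps^{-\alpha})\ll\eps^{-1}$ to $\partial^2_{\uu}F_\eps$ (again a consequence of weak coupling).
\end{itemize}
This controls the singular-kernel error by $C\eps^{\beta'-\beta}|\log\eps|$. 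It then closes a stopping-time bootstrap for the upper density bound, after which Ilmanen's argument removes $\xi_-$.

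\textbf{Step~3 claims integrality, which is not available.} The paper leaves integrality open, and the diffuse varifolds are not integral. So neither integrality nor, via Brakke's perpendicularity theorem, normality of $\vec H_t$ comes for free. You need Allard's rectifiability theorem together with a lower density bound. You also need a separate argument, Ilmanen's seven-epsilon proof, that $\vec H_t$ is normal, as the definition of a rectifiable Brakke flow requires.

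Step~4 goes beyond the statement and is not needed.
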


Loosely speaking, Brakke's formulation of multiphase mean curvature flow encodes~\eqref{eq:MCF}--\eqref{eq:Herring} in a family of energy-dissipation relations of the form
\begin{equation}\label{eq:Brakke_loosely}
  \frac{\dd}{\dd t}  \sum_{i\neq j}\int_{\Sigma_{ij}(t)} \varphi(\cdot,t) \,\dd S
  \leq   \sum_{i\neq j}\int_{\Sigma_{ij}(t)} 
  \Big( 
    -\varphi H_{ij}^2 +  H_{ij} \nu_{ij} \cdot \nabla \varphi + \partial_t \varphi
  \Big)
  \, \dd S
\end{equation}
for all non-negative, sufficiently smooth test functions $\varphi\geq0$.

\medskip
Such a convergence result is highly challenging for several reasons. 
While multiphase mean curvature flow is formally a gradient flow, namely of the total interface area functional
\[
  E(\uu ) =  \sum_{i\neq j} \mathrm{Area}(\Sigma_{ij}),
\]
the corresponding metric in the gradient-flow structure, the $L^2$-metric on normal velocities, is completely degenerate~\cite{MichorMumford}. 
Moreover, in contrast to the scalar case, the geometry is much more complex in the multiphase case, and no maximum principle is available.

\medskip

Ever since Ilmanen's pioneering work~\cite{Ilmanen_allencahn}, it has been a major open question to prove convergence for any type of Allen--Cahn system to multiphase mean curvature flow, cf.~\cite[Section 13, Question 5]{Ilmanen_allencahn}.
There are three partial results in this direction in the literature: 
Bronsard and Reitich~\cite{BronsardReitich} provided formal asymptotic expansions near a triple junction that suggest this limiting behavior. 
In a joint work with Simon~\cite{LauxSimon_allencahn}, the first author proved a \emph{conditional} convergence result towards~$BV$ solutions for a large class of potentials~$F_\eps$, which hinges on the assumption that the energy converges in the limit~$\eps\to0$.
More recently, building on the scalar result~\cite{FLS_allencahn}, Fischer and Marveggio~\cite{FischerMarveggio_vectorAC_special3wells} proved the convergence for short time, more precisely, under the condition that the limiting mean curvature flow remains smooth, and for specific, carefully constructed potentials. 
In contrast, the result of the present paper is unconditional, and global in time.
The only restriction---but also the key idea of the present work---is the design of the potential $F_\eps$, whose construction is elementary, even for any number of phases, but has a special structure. 
The main point is that the coupling between the different components is slightly weaker than in~\cite{LauxSimon_allencahn}. 
This allows us to use integral estimates by X.\ Chen~\cite{Chen_global} to prove the vanishing of the discrepancy measure in the limit.
Moreover, we can even get stronger, pointwise bounds on the discrepancy measure, based on prior work of Tonegawa et al.~\cite{HutchinsonTonegawa,MizunoTonegawa,TakasaoTonegawa_transport,RoegerTonegawa}, which in turn allows us to prove a monotonicity formula in the spirit of Ilmanen~\cite{Ilmanen_allencahn}.

\medskip

The singular limit of the Allen--Cahn equation has a long history in the mathematical literature.
In the static case, the convergence of minimizers has been answered by Modica and Mortola~\cite{ModicaMortola,Modica}. Baldo~\cite{Baldo} generalized this to the setting of a broad class of multi-well potentials. 
The asymptotics of critical points of the Cahn--Hilliard energy, i.e., steady-state solutions of~\eqref{eq:allen cahn loosely}, are more delicate and have only been addressed in the scalar case. 
Hutchinson and Tonegawa~\cite{HutchinsonTonegawa} proved the convergence of critical points to generalized minimal surfaces, which are integral varifolds with vanishing first variation. 
If the energy converges in the limit, the earlier work of Luckhaus and Modica~\cite{LuckhausModica} shows that the first variation of the energies converge, in particular showing that the limiting minimal surface has unit multiplicity.
This conditional result also generalizes to the vector-valued case, as Simon and one of the authors~\cite[Proposition~3.1]{LauxSimon_allencahn} showed, but also in this static case, no unconditional vectorial convergence result is known.

\medskip

In our time-dependent case~\eqref{eq:allen cahn loosely}, there are two main avenues: either proving global-in-time convergence results using compactness, or local-in-time results for smooth initial data by exploiting the smoothness of mean curvature flow. 
For short time, before the onset of singularities, there are two methods available.
X.\ Chen~\cite{chen_smooth} and De~Mottoni and Schatzman~\cite{deMottoniSchatzmann} independently proved quantitative convergence of the scalar Allen--Cahn equation to two-phase mean curvature flow by controlling the error between the true solution of the Allen--Cahn equation to a smooth profile around the limiting mean curvature flow.
This, however, requires an intricate spectral gap estimate which has not been generalized to the vectorial multiphase setting, see~\cite{AbelsMoser_vectorAC_2wells,Linetal23KellerRubinsteinSternberg} for vectorial versions with only two wells.
The other, more recent approach is the relative energy method for phase-field models by Fischer, Simon and one of the authors~\cite{FLS_allencahn}, which is more adapted to the geometric nonlinearity of (multiphase) mean curvature flow, see also~\cite{FHLS} for the general structure in the multiphase case.
This method has proven to be versatile and has since been applied to several variants of the Allen--Cahn equation, among which a mass-preserving version that approximates volume-preserving mean curvature flow~\cite{KroemerLaux_nonlocalAC} and the above-mentioned work~\cite{FischerMarveggio_vectorAC_special3wells}, which also uses the idea of gradient-flow calibrations from~\cite{FHLS}.

\medskip

In order to prove convergence on long time scales also past singularities, weak notions have to be employed. 
Evans, Soner and Souganidis~\cite{EvansSonerSouganidis} proved the convergene of the scalar Allen--Cahn equation to the viscosity solution of two-phase mean curvature flow via the comparison principle. 
However, in the multiphase case, these comparison techniques do not apply.
Instead, one should resort to the gradient-flow structures of the Allen--Cahn equation and of multiphase mean curvature flow.
This idea was first pointed out by Bronsard and Kohn~\cite{BronsardKohn}, who also proved convergence in the scalar, radially symmetric case. 
Also Brakke's weak formulation~\eqref{eq:Brakke_loosely} is guided by the gradient-flow structure. 
Ilmanen~\cite{Ilmanen_allencahn} proved the convergence of the scalar Allen--Cahn equation to a two-phase Brakke flow.
The key of his work is a monotonicity formula that goes beyond the simple monotonicity of the total energy
\begin{equation}\label{eq:EDI_loosely}
  \frac{\dd}{\dd t} E_\eps(\uu_\eps(\cdot,t)) 
  = -\int \frac1\eps \big|\eps \partial_t \uu_\eps \big|^2\,\dd x,
\end{equation}
and instead states that the energy on small scales at later times can be controlled by the energy on larger scales at earlier times up to an error on which we have precise control. 
This precisely mirrors Huisken's celebrated monotonicity formula for mean curvature flow~\cite{Huisken_monotonicity}. 
Such monotonicity formulas have an intimate connection to regularity theory.
Ilmanen's work has been generalized to several other cases: One of the authors showed the convergence in the volume-preserving case~\cite{Takasao_volpres,Takasao_vol_preshigher_dim}, in the presence of obstacles~\cite{Takasao_obstacle}, and---together with Tonegawa---included a given advective field~\cite{TakasaoTonegawa_transport}; Hensel and the other author~\cite{HenselLaux_degiorgi} showed that Ilmanen's limit is also a De~Giorgi solution and therefore also enjoys a weak-strong uniqueness principle. 
However, this method did not seem to extend to the vectorial case.
The only long-time convergence results in the vectorial setting hinge on the assumption that the energies converge in the limit: Simon and one of the authors~\cite{LauxSimon_allencahn} showed the convergence to a $BV$ solution, the other author showed convergence to a Brakke flow~\cite{takasao2018LL}, and Steinke~\cite{Steinke} proved convergence to a De~Giorgi solution.

\medskip

The main difficulty in the vectorial Allen--Cahn equation is that such a monotonicity formula must depend on the potential $F_\eps$ in intricate ways: Many other physical systems have the same formal structure as the vectorial Allen--Cahn equation~\eqref{eq:allen cahn loosely}, but do not exhibit such a monotonicity formula. 
For example, the complex-valued Ginzburg--Landau equation corresponds to~\eqref{eq:allen cahn} with $n=2$, with the crucial difference that the potential $F_\eps (\uu) = \frac1\eps (1-|\uu|^2)^2$ vanishes along a circle. 
If $d\geq3$, the singular set of the limit $\uu$ moves by (co-dimension $2$) mean curvature flow, see e.g.~\cite{Lin98,JerrardSoner98}, but the monotonicity formula of Ilmanen does not hold in this case.
More generally, one can consider potentials that vanish on a submanifold of the target space, or a union of submanifolds, cf.~\cite{LinWang12}. 
One of these examples is the Keller--Rubinstein--Sternberg model, for which $\{F_\eps=0\}= O(d) \subset \R^{d\times d}$ is continuous but has two connected components. Fei, Lin, Wang, and Zhang~\cite{Linetal23KellerRubinsteinSternberg} were able to carry out the program of error bounds for matched asymptotic expansions~\cite{chen_smooth,deMottoniSchatzmann} for this model, and proved that for short time, solutions converge to two-phase mean curvature flow.
Another interesting example comes from liquid crystals at the critical temperature in which the wells of $F_\eps$ are $\{0\} \cup \mathcal{U}$ where $\{0\}$ is the isotropic phase and $\mathcal U$ is the nematic phase, a three-dimensional submanifold with a conical singularity. Liu and one of the authors proved the convergence towards two-phase mean curvature flow for short time~\cite{LauxLiu21} via the method from~\cite{FLS_allencahn}.
Global-in-time convergence results for such systems are widely open, even in the case when $\{F_\eps=0\}$ only has two connected components.
A key obstacle in proving long-time results for systems like~\eqref{eq:allen cahn loosely} is the crucial difference between the cases when the wells $\{F_\eps=0\}$ are discrete or continuous. 
In our case of discrete wells, it has been expected that, like in the scalar case, the energy is equally distributed between two terms~$\frac\eps2 |\nabla \uu_\eps|^2$ and~$F_\eps(\uu_\eps)$, which we will show then implies a monotonicity formula. 
This is in stark contrast to the case of continuous wells, where of course, the field can freely move within the well and, for example in the case of the complex-valued Ginzburg--Landau equation, it is known that the energy is concentrated in the first term $\frac\eps2 |\nabla \uu_\eps|^2$. 
In that case, one can only prove a monotonicity formula with a kernel that rescales with co-dimension $2$, cf.~\cite{BethuelOrlandiSmets}, which is natural as the limiting singular sets are co-dimension $2$ surfaces. But in our case we need the sharper scaling of co-dimension $1$ as we expect to see networks of hypersurfaces in the limit.
A related question is the structure of solutions of vector-valued Allen--Cahn equations close to triple junctions. 
Bronsard, Gui, and Schatzman raised this question in~\cite{BronsardGuiSchatzman}, and it has received continuous attention since then with significant contributions by Alikakos starting with~\cite{Alikakos11}; see the work by Alikakos, Geng, and Zarnescu~\cite{Alikakosetal22} as well as Sandier and Sternberg~\cite{SandierSternberg24}, and references therein.
In Alikakos' phrasing, our monotonicity formula corresponds to the long sought-after ``strong monotonicity formula'', while for other systems, only the weak monotonicity formula (like the one for the complex-valued Ginzburg--Landau equation) has been known. 

\medskip

Brakke flows are well-studied objects in geometric measure theory.  
We refer the interested reader to the lecture notes of Tonegawa~\cite{Tonegawa_SpringerBriefs}. 
In particular, for sufficiently regular evolving surfaces, the localized energy-dissipation inequality~\eqref{eq:Brakke_loosely} contains sufficient information to characterize them as solutions to~\eqref{eq:MCF}--\eqref{eq:Herring}, see~\cite[Proposition~2.1]{Tonegawa_SpringerBriefs}.
Moreover, under suitable additional conditions, Brakke flows are amenable to a partial regularity theory, first established by Brakke~\cite{brakke} (see also \cite[Theorem~6.1]{Tonegawa_SpringerBriefs}), and subsequently studied by White~\cite{White2005}, Kasai and Tonegawa~\cite{Kasai-Tonegawa,Tonegawa2014}, and Stuvard and Tonegawa~\cite{stuvard2025epsilonregularitytheorembrakkeflows}. 
These regularity theorems must also account for singularities such as junctions. 
The precise structure of these singularities was analyzed by Cheeger, Haslhofer, and Naber~\cite{Cheeger-Haslhofer-Naber2013}.
It should be noted that these regularity results require the Brakke flow to have unit density. 
Whether the solutions obtained in this paper possess unit density remains a subject for future research.

\medskip

Our convergence result has ramifications in other areas in- and outside of mathematics.
First, to simulate multiphase mean curvature flow~\eqref{eq:MCF}--\eqref{eq:Herring} through singularities and topological changes, one only needs to solve the Allen--Cahn equation~\eqref{eq:allen cahn loosely}, which is a system of parabolic differential equations. 
Second, the limit $\eps\to0$ justifies an effective field theory.
It connects the model~\eqref{eq:allen cahn loosely} which is part of Ginzburg--Landau theory, which has a long history in physics, dating back to the work of Landau~\cite{Landau36} and Ginzburg--Landau~\cite{GinzburgLandau}, to a purely geometric flow.
Third, while our result concerns the sharp-interface limit of the Allen--Cahn equation, one may also read it as an existence result for multiphase mean curvature flow, which is an interesting problem in geometric measure theory in its own right.
For two-phase problems, weak solutions can be constructed by various methods. 
Notable examples are the construction of Brakke flows via elliptic regularization by Ilmanen~\cite{Ilmanen1994}, and the construction of viscosity solutions using the level-set method by Chen, Giga, and Goto \cite{ChenGigaGoto} and Evans and Spruck \cite{EvansSpruckI}. 
In particular, Evans and Spruck also showed that these viscosity solutions form Brakke flows~\cite{EvansSpruckIV}. 
Vice versa, Ullrich and one of the authors~\cite{LauxUllrich} showed that generically, two-phase Brakke flows are in fact equivalent to the viscosity solution, as long as their enclosed volume is continuous in time. 
In the multiphase case, Kim, Stuvard, and Tonegawa~\cite{KimTonegawa,StuvardTonegawa} proved the global existence of a multiphase Brakke flow by establishing a time-discrete variational approximation scheme that requires two procedures per time step, originally proposed by Brakke.
While this procedure provides a solid foundation, adapting it to other situations requires substantial technical effort, see e.g.~\cite{scapin2026} for the adaptation of the method from the whole space to a half-space with homogeneous Neumann conditions.
In contrast, our method offers the advantage of avoiding complex procedures, as the proof relies fundamentally on the properties of solutions to the system of reaction-diffusion equations.
We are convinced that our method will shed light on several related problems as well.

\section{Main results}

\begin{figure}
  \begin{tikzpicture}[scale=1.7]
    \draw[->] (-0.1,0) -- (1.5,0) node [right]{$w_1$};
    \draw[->] (0,-0.1) -- (0,1.5) node [above]{$w_2$};
    
    \foreach \i in {0,1}
    {
          \draw[dashed] (0,\i) -- (1,\i);
          \draw[dashed] (\i,0) -- (\i,1);
    }

    \draw[thick,blue] (1,0) -- (0,1);

    \foreach \i in {0,1}
    {
      \foreach \j in {0,1}
      {
        \node[red] at (\i,\j) {\textbullet};
      }
    }

  \end{tikzpicture}
  $\qquad\quad$
  \begin{tikzpicture}[scale=1.7]
    \draw[->] (-0.1,0,0) -- (1.5,0,0) node [right]{$w_1$};
    \draw[->] (0,-0.1,0) -- (0,1.5,0) node [above]{$w_2$};
    \draw[->] (0,0,-0.1) -- (0,0,1.5) node [below left]{$w_3$};

    \foreach \i in {0,1}
    {
      \foreach \j in {0,1}
      {
          \draw[dashed] (0,\i,\j) -- (1,\i,\j);
          \draw[dashed] (\i,0,\j) -- (\i,1,\j);
          \draw[dashed] (\i,\j,0) -- (\i,\j,1);
      }
    }

    \node[red] at (0,0,0) {\textbullet};
    \draw[thick,blue] (1,0,0) -- (0,1,0) -- (0,0,1) -- cycle;
    \draw[fill,blue,opacity=0.3] (1,0,0) -- (0,1,0) -- (0,0,1) -- cycle;

    \foreach \i in {0,1}
    {
      \foreach \j in {0,1}
      {
        \foreach \k in {0,1}
        {
          \ifthenelse{\i=0 \AND \j =0 \AND \k=0}{}{
            \node[red] at (\i,\j,\k) {\textbullet};
          }
        }
      }
    }

  \end{tikzpicture}
  
  \caption{Illustrations of the energy landscape for $n=2$ (left) and $n=3$ (right) in the transformed coordinates $w_i=\frac1\sigma\phi(u_i)$. 
  The wells of $\frac1\eps \sum_{i=1}^n W(\phi^{-1}(\sigma w_i))$ (red points) are the $2^n$ vertices of the unit $n$-cube $(0,1)^n$. 
  The well of $\frac{\sigma^2}{2\eps^\alpha} \big( 1 - \sum_{i=1}^n w_i\big)^2$ (blue surface) is the standard $(n-1)$-simplex $\mathrm{conv}\{\ee_1,\ldots,\ee_n\}$.
  The intersection of the two is precisely the desired pure phases $\{\ee_1,\ldots\ee_n\}$.}
  \label{fig:potential}
\end{figure}
We first motivate our potential. 
Let $W(u)=u^2 (1-u)^2$, $u\in \R$, denote the standard double-well potential, $\phi(u)= \int_0^u \sqrt{2W(s)}\,\dd s$ and $\sigma= \phi(1)$.
We can think of $u\mapsto w$ with $w_i=\frac1\sigma \phi(u_i)$ as a coordinate transformation in our state space.
Then $w = 1$ if $u= 1$ and $w= 0$ for $u= 0$. 
We want to define a potential that enforces the constraint $\uu \in \{\ee_1,\dots,\ee_n\}$ in the limit $\eps\to0$, but that keeps the coupling between the different components of $\uu_\eps$ sufficiently weak. 
For $\uu = (u_1,\ldots,u_n)\in \R^n$, we therefore set
\begin{equation}\label{eq:potential}
  F_\eps(\uu):= \frac1\eps \sum_{i=1}^n W(u_i) + \frac1{2\eps^\alpha} \Big( \sigma - \sum_{i=1}^n \phi (u_i)\Big)^2
  \quad \text{with } \alpha\in (0,1),
\end{equation}
see Figure~\ref{fig:potential}.
In terms of the change of variables above, the second term then simply reads $\frac{\sigma^2}{2\eps^\alpha} \big(1-\sum_{i=1}^n w_i\big)^2$ and couples the different components by weakly enforcing the sum constraint $\sum_{i=1}^n w_i =1$.

Then the energy $E_\eps$ from~\eqref{eq:Eeps loosely} of a vector-valued function $\uu = (u_1,\ldots,u_n)\colon \torus \to \R^n$ reads
\begin{align}
  \label{eq:Eeps}
  E_\eps(\uu)=
 \sum_{i=1}^n \int_{\torus} \frac{\eps}2 |\nabla u_i|^2 + \frac1\eps W(u_i)  \, \dd x + \int_{\torus} \frac1{2\eps^\alpha}\Big( \sigma - \sum_{i=1}^n \phi ( u_i)\Big)^2 \, \dd x.
\end{align}
Here and throughout, we restrict ourselves to the case of periodic functions, i.e., functions defined on the flat torus $\torus$.
The first integral is the usual scalar Allen--Cahn energy for each component of $\uu$, while the second integral is a penalty term for violating the condition $\sum_{i=1}^n w_i=1$. 
Hence, in the limit $\varepsilon \to 0$, the constraint $\sum_{i=1}^n w_i=1$ is enforced, together with the usual constraints  $u_i\in \{0,1\}$ (coming from the penalty $\frac1\eps W(u_i)$).
Hence, we expect that any $\uu_\eps$ with $\sup_{\eps\in(0,1]} E_\eps (\uu_\eps) <\infty$ and with $\uu_\eps \to \uu$ in $L^1$ yields a partition $\uu = \sum_{i=1}^n \chi_{\Omega_i} \ee_i$ in the limit.
One can easily show (see Proposition~\ref{prop:Gamma_convergence}) that $E_\eps$ indeed $\Gamma$-converges to the optimal partition energy
\begin{align}\label{eq:E}
  E(\uu) = 
    \sigma \sum_{i=1}^n P(\Omega_i) \quad \text{for} \quad \uu = \sum_{i=1}^n \chi_{\Omega_i} \ee_i \text{ with } (\Omega_i)_{i=1}^n\text{ a partition of } \torus.
\end{align}
Here $P(\Omega):= \sup\{\int_{\Omega} \nabla \cdot X \,\dd x \colon X\in C^1(\torus;\R^d), |X|\leq 1 \text{ in }\torus\}$ denotes the perimeter of a measurable set $\Omega\subset \torus$. 
This means in particular that while the penalty term enforces the sum constraint $\sum_{i=1}^n w_i=1$ in the limit, it does not change the value of the energy functional at finite energy configurations.

\medskip

In spirit, the effect of the potential on our dynamics is reminiscent of the Voronoi implicit interface method by Saye and Sethian~\cite{SayeSethian}, an efficient and versatile time-dicretization of multiphase mean curvature flow based on the level set method of Osher and Sethian~\cite{OsherSethian}.
This method alternates between two steps: first, individual mean curvature flow for each set in the partition; second, filling the voids around triple junctions by geometric optics.
In our case, while the gradient flow of the first integral in the energy~\eqref{eq:Eeps} would evolve each phase by a scalar Allen--Cahn equation, the second integral in~\eqref{eq:Eeps} couples the equations to fill the void.

\medskip

We write equation~\eqref{eq:allen cahn loosely} explicitly as the following system, for $1\leq i \leq n$,
\begin{align}
  \label{eq:allen cahn}
  \eps\partial_t u_i 
  &= \eps\Delta u_i -\frac1\eps W'(u_i) + \frac{1}{\eps^{\alpha}}\sqrt{2W(u_i)} \Big(\sigma - \sum_{j=1}^n \phi(u_j)\Big), 
  &&\text{in } \torus \times (0,\infty),
  \\
  u_i(\cdot,0) 
  &= u_{0,i} 
  &&\text{on } \torus \times\{t=0\},
\end{align}
with a given initial datum $\uu_0=(u_{0,i})_{i=1,\cdots,n}$, see Figure~\ref{fig:allen-cahn} for an example.

\begin{figure}
  \includegraphics[width=0.2\textwidth]{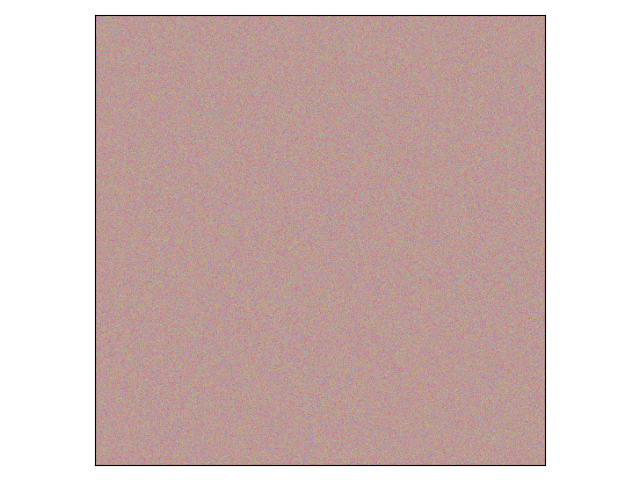}%
  \includegraphics[width=0.2\textwidth]{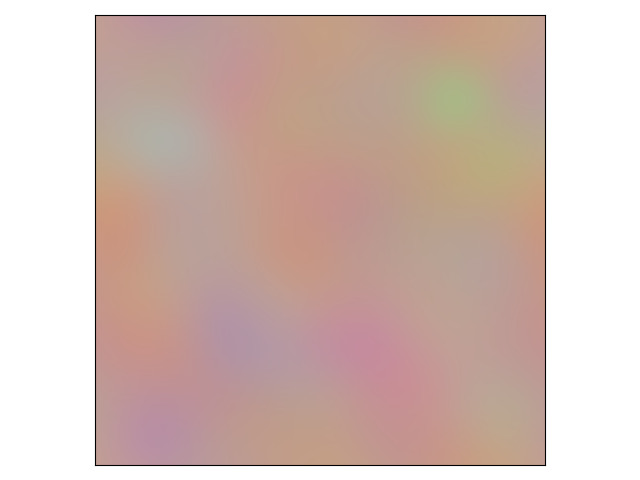}%
  \includegraphics[width=0.2\textwidth]{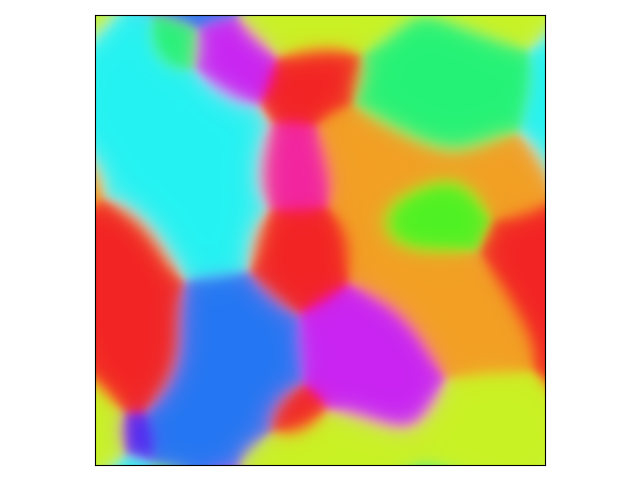}%
  \includegraphics[width=0.2\textwidth]{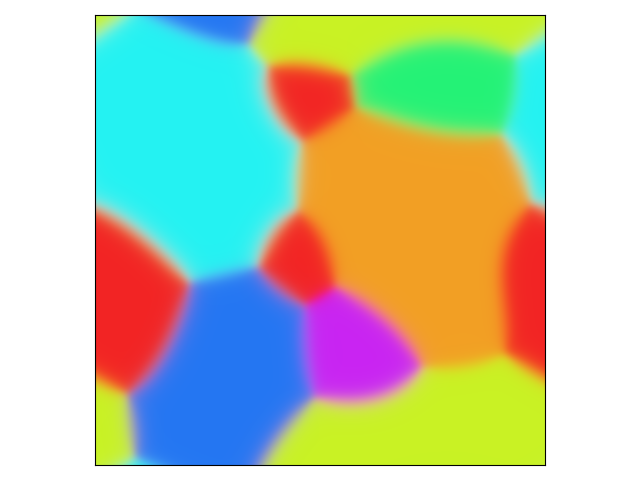}%
  \includegraphics[width=0.2\textwidth]{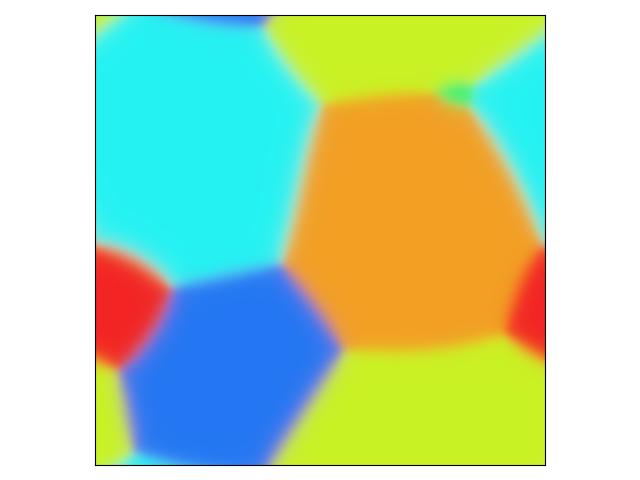}%

  \vspace{10pt}
  \includegraphics[width=0.2\textwidth]{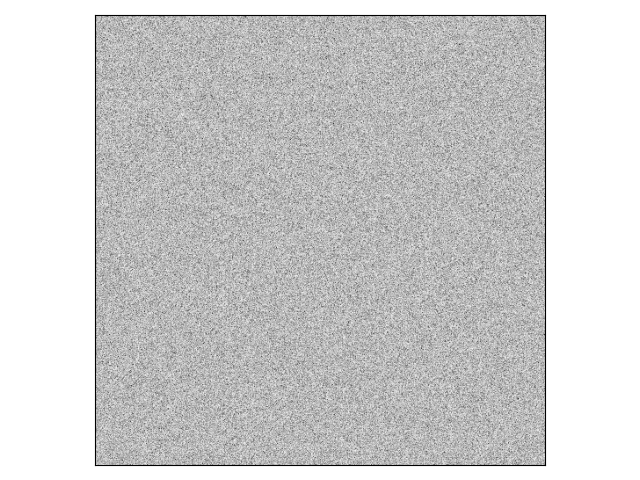}%
  \includegraphics[width=0.2\textwidth]{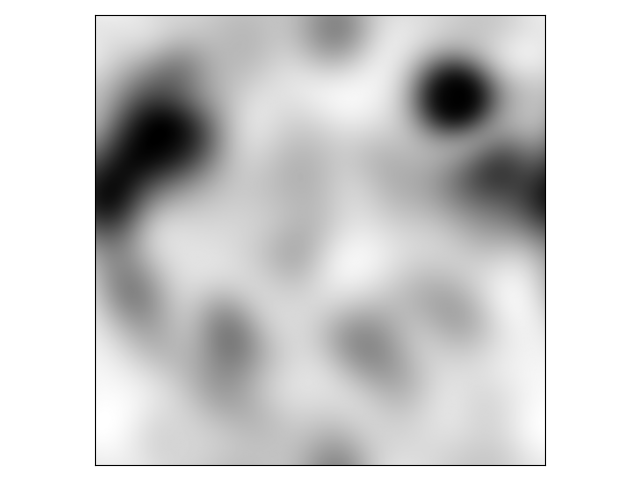}%
  \includegraphics[width=0.2\textwidth]{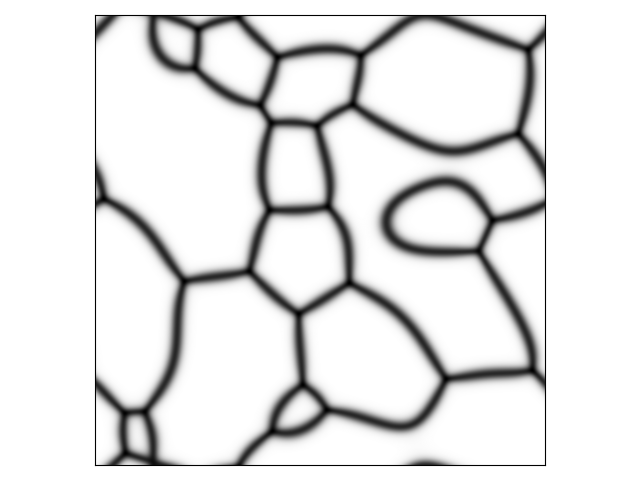}%
  \includegraphics[width=0.2\textwidth]{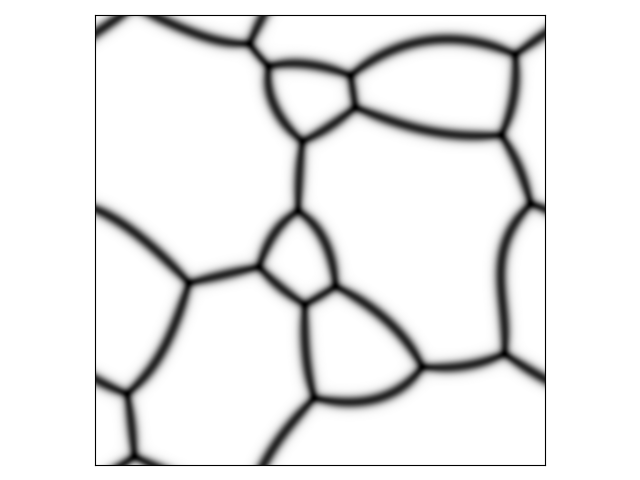}%
  \includegraphics[width=0.2\textwidth]{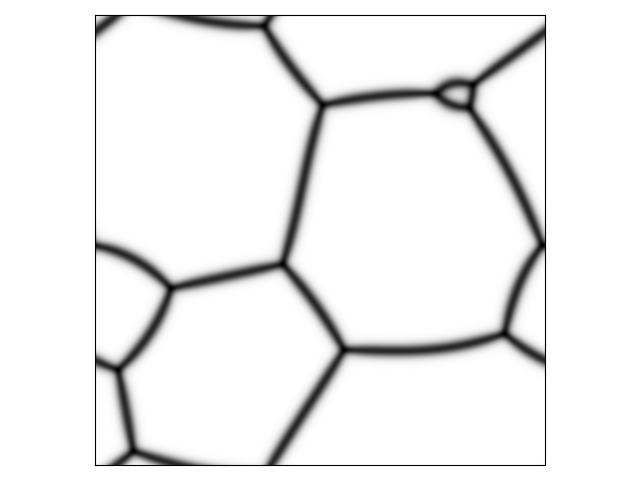}%
  \caption{The solution to the Allen--Cahn system~\eqref{eq:allen cahn} starting from random initial data in dimension $d=2$. 
  After a short time, the data becomes well-prepared, then our result, Theorem~\ref{thm:ACtoBrakke}, predicts that the evolution effectively behaves like multiphase mean curvature flow. 
  \\
  Top: The function $\uu_\eps(\cdot,t)$ is almost constant in large parts of the domain, mostly taking values close to $\{\ee_1,\ldots,\ee_n\}$.  
  \\
  Bottom: The corresponding energy density $\omega^\eps_t$ (black corresponds to high energy) concentrates on the transition layers, close to a network of curves that moves to decrease its total length.}
  \label{fig:allen-cahn}
\end{figure}
We will show that any limit of solutions $\uu_\eps$ of the Allen--Cahn system~\eqref{eq:allen cahn} is a rectifiable Brakke flow according to the following definition, cf.~\cite{brakke}.
\begin{definition}[Rectifiable Brakke flow]\label{def:Brakke}
  We say that a weakly measurable family of mass measures $(\omega_t)_{t\in[0,\infty)}$ is a \emph{rectifiable Brakke flow with initial condition $\omega_0$} if
  \begin{enumerate}[(i)]
    \item For almost every $t>0$, $\supp \omega_t$ is $(d-1)$-rectifiable and the uniquely associated varifold $\mu_t= \omega_t \otimes (\delta_{T_x \omega_t})_{x\in \torus}$ has locally bounded first variation with mean curvature vector $\vec{H}_t$, i.e., $\delta \mu_t = -\vec{H}_t \omega_t$. Moreover, $\vec{H}_t$ points in normal direction, i.e., $(T_{x} \omega_t) \vec{H}_t(x) =0$ for $\omega_t$-a.e.\ $x\in \torus$.
    \item For any $0\leq t_1<t_2<\infty$ and any test function $\varphi\in C^1(\torus\times[0,\infty);[0,\infty))$ we have
    \begin{align}\label{eq:Brakke}
      \int_{\torus} \varphi \, \dd \omega_{(\cdot)}\Big|_{t_1}^{t_2} \leq \int_{t_1}^{t_2} \int_{\torus} \big(  -\varphi |\vec{H}_t|^2 + \vec{H}_t \cdot \nabla \varphi + \partial_t \varphi \big)\,\dd \omega_t\, \dd t.
    \end{align}
  \end{enumerate}
\end{definition}

Now we can rigorously formulate our main result which states that any weak limit of the Allen--Cahn system~\eqref{eq:allen cahn} is a Brakke flow.
\begin{theorem}\label{thm:ACtoBrakke}
  Let $\uu_\eps \colon \torus \times (0,\infty) \to \R^n$ be a solution to the Allen--Cahn system~\eqref{eq:allen cahn} with well-prepared initial conditions $\uu_{\eps,0}$ in the sense of Definition~\ref{def:well-prepared}.
  Then the sequence $\uu_\eps$ is pre-compact in $L^1$, the energy measures $\omega^\eps_t:= \big(\eps |\nabla \uu_\eps|^2 + F_\eps(\uu_\eps)\big) \,\dd x$ are pre-compact in the sense that any sequence $\eps\to0$ contains a subsequence $\eps'\to0$ such that $\omega^{\eps'}_t \stackrel{\ast}{\rightharpoonup} \omega_t $ in $C_c(\torus)^\ast$ for every $t\geq0$. 
  Any such limit point of $(\omega_t)_{t\geq0}$ is a rectifiable Brakke solution to multiphase mean curvature flow with initial condition
   \[
    \dd \omega_0 = \operatorname{weak}^\ast-\lim_{\eps\downarrow0} \Big( \frac\eps2 |\nabla \uu_{\eps,0}|^2 + F_\eps(\uu_{\eps,0})\Big) \,\dd x.
   \]
\end{theorem}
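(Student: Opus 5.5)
The proof follows Ilmanen's scheme for the scalar Allen--Cahn equation, with the monotonicity formula and the vanishing of the discrepancy adapted to the coupled system~\eqref{eq:allen cahn}.

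\textbf{Step 1: a priori estimates and compactness.}
From the energy identity~\eqref{eq:EDI_loosely} and well-preparedness we get $\sup_t E_\eps(\uu_\eps(\cdot,t))\le C$ and $\int_0^\infty\int \eps|\partial_t\uu_\eps|^2\,\dd x\,\dd t\le C$.
We then use the Modica--Mortola trick componentwise: $|\nabla \phi(u_i)|\le \frac\eps2|\nabla u_i|^2+\frac1\eps W(u_i)$, and a similar bound holds for $\partial_t\phi(u_i)$. This gives $BV$ bounds in space-time on $w_i=\phi(u_i)/\sigma$.
\begin{itemize}
\item Truncation or a maximum principle keeps each $u_i$ in a bounded range, e.g.\ $[0,1]$ if the initial data are.
\item Hence $L^1$ precompactness follows, and the limit satisfies $u_i\in\{0,1\}$.
\item The penalty term forces $\sum_i w_i=1$, so the limit is a partition.
\end{itemize}
For the measures $\omega^\eps_t$, we test against $\varphi$ and bound $\frac{\dd}{\dd t}\int\varphi\,\dd\omega^\eps_t\le C(\varphi)$ using the localized energy identity. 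A countable dense family of test functions plus a diagonal argument then yields convergence for every $t\ge0$, as in Ilmanen.

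\textbf{Step 2: discrepancy bounds (the main obstacle).}
Set $\xi_\eps=\frac\eps2|\nabla\uu_\eps|^2-F_\eps(\uu_\eps)$. We need:
\begin{enumerate}
\item a pointwise upper bound $\xi_\eps\le C\eps^{-\beta}$, for some small $\beta$, away from $t=0$ (well-prepared data should give this from the start);
\item $|\xi_\eps|\to0$ in $L^1_{loc}$ of space-time.
\end{enumerate}
For the pointwise bound, I would compute the parabolic equation satisfied by $\frac\eps2|\nabla u_i|^2-\frac1\eps W(u_i)-G_\eps$, where $G_\eps$ is a suitable share of the penalty, as in Tonegawa's scalar argument, and apply the maximum principle to each component separately.

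The key design point is that $\alpha<1$. The coupling term $\eps^{-\alpha}\sqrt{2W(u_i)}(\sigma-\sum_j\phi(u_j))$ is then lower order relative to $\eps^{-1}W'$, so it can be absorbed. In particular:
\begin{itemize}
\item the penalty energy itself tends to $0$ in $L^1$, because $\sum_j\phi(u_j)$ deviates from $\sigma$ only on a set of size $O(\eps^{\alpha}\cdot)$;
\item the vector discrepancy reduces to the sum of the scalar discrepancies plus a vanishing term.
\end{itemize}
For the $L^1$ vanishing I would run Chen's integral estimates for each component, treating the coupling as a forcing bounded in $L^2(\eps^{-1}\dd x\,\dd t)$ times a small factor. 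I expect this step, especially controlling the coupling under the maximum principle near triple-junction regions, to be the hardest.

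\textbf{Step 3: monotonicity, rectifiability and the Brakke inequality.}
With the upper bound on $\xi_\eps$, Huisken's backward heat kernel $\rho_{y,s}$ gives
\[
\frac{\dd}{\dd t}\int\rho\,\dd\omega^\eps_t\le \frac{1}{2(s-t)}\int\rho\,\xi_\eps\,\dd x ,
\]
up to harmless terms; the right-hand side is integrable by Step 2. This yields uniform density upper bounds and positive lower bounds on the support. Passing to the limit in the first variation, with the discrepancy vanishing, shows that the associated varifolds have first variation with $L^2$ mean curvature $\vec H_t=\lim(-\eps\Delta\uu_\eps+\partial_\uu F_\eps)\nabla\uu_\eps/\ldots$, controlled by the dissipation. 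Allard-type arguments as in Hutchinson--Tonegawa then give rectifiability, and Brakke's perpendicularity theorem gives normality of $\vec H_t$.

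Finally, we localize the energy identity with $\varphi\ge0$:
\[
\int\varphi\,\dd\omega^\eps_t\Big|_{t_1}^{t_2}=\int_{t_1}^{t_2}\!\!\int\Big(-\varphi\eps|\partial_t\uu_\eps|^2-\eps\,\nabla\varphi\cdot\nabla\uu_\eps^{T}\partial_t\uu_\eps+\partial_t\varphi\,e_\eps\Big),
\]
where $e_\eps$ denotes the energy density. Lower semicontinuity, as in Ilmanen and Hutchinson--Tonegawa, together with the vanishing discrepancy turns this into~\eqref{eq:Brakke}, with $\omega_0$ being the weak-$\ast$ limit of the initial energy densities.
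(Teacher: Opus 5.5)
\textbf{Gap 1: the discrepancy and the monotonicity error.} Chen's estimate (Theorem~\ref{thm:chen}) controls only the \emph{positive} part $\big(\frac\eps2|\nabla u_i|^2-\frac1\eps W(u_i)\big)_+$. Applied componentwise with $v_\eps=-\eps\partial_t u_i+\eps^{-\alpha}\sqrt{2W(u_i)}\big(\sigma-\sum_j\phi(u_j)\big)$ and $\eps\int\!\!\int v_\eps^2\lesssim\eps+\eps^{2-2\alpha}\to0$, it gives exactly $\int_0^T\!\int\xi^\eps_+\,\dd x\,\dd t\to0$. It says nothing about $\xi^\eps_-$, so your claim that $|\xi_\eps|\to0$ in $L^1_{loc}$ does not follow.

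Even the positive part, in this form, does not make the monotonicity error small, so ``the right-hand side is integrable by Step 2'' fails. The error is integrated against the singular kernel $\frac{1}{2(s-t)}\rho_{(y,s)}(x,t)$. Chen's $L^1$ convergence comes with no rate. The pointwise bound is $\xi^\eps_+\le C\eps^{-\beta}$ with $\beta=\frac{1+\alpha}{2}$, which is not small, and it only controls time intervals of length $\ll\eps^{2\beta}$ below $s$. So you get no density upper bound uniformly in $(y,s)$.

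The missing ingredient is a scale-invariant local estimate $\int_{B_r(y)}\xi^\eps_+\,\dd x\le C\eps^{\beta'-\beta}r^{d-1}$ for $r\ge\eps^{\beta'}$ (Lemma~\ref{lem4.4}). The paper proves it in a bootstrap with a stopping time $T_\eps$ for the density bound:
\begin{enumerate}
\item lower density bounds near points where some $u_i\in(\frac14,\frac34)$, together with a Vitali covering, show that the transition region in $B_r(y)$ has measure $O(\eps^{\beta'}r^{d-1})$;
\item outside that region, $W''\ge c>0$ near the wells forces $\int\eps|\nabla\uu|^2$ to decay on the time scale $\eps^2$.
\end{enumerate}
Only this gives the monotonicity formula with error $C\eps^{\beta'-\beta}|\log\eps|$ uniformly in $(y,s)$ (Lemma~\ref{lemma9}), and hence the density bounds.

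The vanishing of $\xi^\eps_-$ is then a separate Ilmanen-type argument, absent from your plan. The term $\frac{1}{2(s-t)}\int\rho_{(y,s)}\,\dd\xi^\eps_-$ on the left of the monotonicity formula is integrated over base points $(y,s)$ against $\omega_s$. This is combined with the distance-to-wells lemma, the clearing-out lemma, and the fact that $\omega$ vanishes on the set of small forward Gaussian density.

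\textbf{Gap 2: normality of $\vec H_t$.} You cannot take this from Brakke's perpendicularity theorem, which requires an \emph{integral} varifold. Integrality of the limit is not known here, and for merely rectifiable varifolds perpendicularity can fail: a line with smoothly varying real multiplicity $\theta$ has tangential curvature $\nabla_\tau\theta/\theta$. The paper instead adapts Ilmanen's ``seven epsilon'' argument. The approximate first variation $\int X\cdot(\nabla\uu_\eps)^\ast\big(\eps\Delta\uu_\eps-\partial_{\uu}F_\eps(\uu_\eps)\big)\,\dd x$ depends on $X$ only through $(X\cdot\nu^\eps_i)\nu^\eps_i$. Approximating $(T_x\omega_t)^\perp X$ by a smooth field then carries this normality to the limit.

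A smaller point: your claim that the penalty energy vanishes because $\sum_j\phi(u_j)$ deviates from $\sigma$ only on a small set is unjustified, though the argument does not need it.
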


Our conditions on the initial data are the following.

\begin{definition}[Well-prepared initial data]\label{def:well-prepared}
  We say that initial data $\uu_{\eps,0}$ for the Allen--Cahn system are \emph{well-prepared} if the following conditions hold:
  \begin{enumerate}[(i)] 
    \item \label{item:pointwise_bounds} We have the pointwise bounds
 \begin{equation*}
    0\leq u_{0,\eps,i}\leq 1 \quad \text{in $\torus$ for all $i=1,\ldots,n$},
  \end{equation*}
  \item \label{item:Eeps0} The initial energy is uniformly bounded
  \begin{equation*}
    E_0:=\sup_{\eps \in (0,1]} E_\eps(\uu_{\eps,0}) <\infty.
  \end{equation*}
  \item We have the gradient bound
  \begin{align*}
    \sup_{\eps \in (0,1]} \sup_{\torus} \eps |\nabla \uu_{\eps,0}| <\infty.
  \end{align*}
  \item The initial discrepancy is bounded from above, i.e., there exists $C<\infty$ such that for any $i=1,\ldots,n$, and any $\eps\in(0,1]$,
  \begin{align*}
    \frac\eps2 |\nabla u_{\eps,0,i}|^2 - \frac1\eps W(u_{\eps,0,i}) \leq  C\eps^{-\frac{1+\alpha}{2}}\quad \text{in } \torus.
  \end{align*}
  \item The $(d-1)$-dimensional density of $\omega^\eps_0$ is bounded from above:
  \begin{align*}
    D_0:=\sup_{\eps\in(0,1]} \sup_{x\in\torus, r\in(0,1]} \frac{\omega^\eps_0(B_r(x))}{\omega_{d-1}r^{d-1}} <\infty.
  \end{align*}
  \end{enumerate}  
\end{definition}
In Appendix~\ref{sec:initial_data}, we construct initial data that satisfy all the conditions of Definition~\ref{def:well-prepared}, see Proposition~\ref{prop7} and Remark~\ref{rem:initialconditions}.
Note that the fact that $\vec{H}_t$ is a normal vector in Definition~\ref{def:Brakke} is crucial to recover Huisken's monotonicity formula for the limiting varifold from Brakke's inequality~\eqref{eq:Brakke}.

\medskip

Since Ilmanen's pioneering work~\cite{Ilmanen_allencahn}, a major obstacle for any type of vectorial Allen--Cahn system has been the vanishing of the discrepancy measure:
\begin{align}\label{eq:vanishing_of_discrepancy}
 \dd \xi^\eps := \Big( \frac\eps2|\nabla \uu_\eps|^2 - F_\eps(\uu_\eps)\Big) \,\dd x \,\dd t \stackrel{\ast}{\rightharpoonup} 0.
\end{align}
The crucial estimate in this is the \emph{upper} bound, more precisely, the vanishing of the positive part of the discrepancy measure:
\begin{align}\label{eq:vanishing_of_discrepancy_positive}
 \dd \xi^\eps_+ = \Big( \frac\eps2|\nabla \uu_\eps|^2 - F_\eps(\uu_\eps)\Big)_+ \,\dd x \,\dd t \stackrel{\ast}{\rightharpoonup} 0.
\end{align}
In the scalar case, Ilmanen proves the latter by a maximum principle argument. So, for suitable initial conditions, one even knows that $\dd \xi^\eps \leq 0$ in the scalar case. 
However, Ilmanen's proof does not seem to apply in the vectorial setting.
Instead, the key ingredient for Theorem~\ref{thm:ACtoBrakke} is the following result that shows exactly this vanishing of the positive part of the discrepancy by using integral estimates instead of pointwise estimates.
\begin{theorem}\label{thm:positive part of disrepancy measure}
  Let $\uu_\eps \colon \torus\times (0,\infty)\to \R^n$ be a solution to the Allen--Cahn system~\eqref{eq:allen cahn} with well-prepared initial data, then
  \begin{equation}\label{eq:integral discprepancy positive vanishes}
    \lim_{\eps\to0} \int_0^T\int_{\torus} \Big( \frac\eps2 |\nabla \uu_\eps|^2 - F_\eps(\uu_\eps)\Big)_+ \dd x \,\dd t =0 \quad \text{for all $T<\infty$.}
  \end{equation}
 In particular, the positive part of the discrepancy measure vanishes in the limit, i.e., 
  \begin{equation}\label{eq:positive_part_of_discrepancy_weak-star_to_zero}
    \dd \xi^\eps_+ = \Big( \frac\eps2 |\nabla \uu_\eps|^2 - F_\eps(\uu_\eps)\Big)_+ \dd x \,\dd t \stackrel{\ast}{\rightharpoonup} 0 \quad\text{as }\eps\to0.
  \end{equation}
\end{theorem}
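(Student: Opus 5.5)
The plan is to work component-wise. The key structural point is that the coupling in \eqref{eq:potential} is weak (of order $\eps^{-\alpha}$ with $\alpha<1$). As a first step I reduce the vectorial discrepancy to scalar ones: since $F_\eps(\uu)\ge \frac1\eps\sum_i W(u_i)$, we have
\[
\Big(\frac\eps2|\nabla\uu_\eps|^2-F_\eps(\uu_\eps)\Big)_+\le \sum_{i=1}^n\Big(\frac\eps2|\nabla u_i|^2-\frac1\eps W(u_i)\Big)_+ .
\]
It therefore suffices to show that each scalar positive discrepancy $\xi_i:=\frac\eps2|\nabla u_i|^2-\frac1\eps W(u_i)$ has vanishing positive part in $L^1(\torus\times(0,T))$. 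Each $u_i$ solves a scalar Allen--Cahn equation with forcing $g_i:=\eps^{-\alpha}\sqrt{2W(u_i)}\,(\sigma-\sum_j\phi(u_j))$, that is, $\eps\partial_t u_i=\eps\Delta u_i-\frac1\eps W'(u_i)+g_i$. This is exactly the setting of Allen--Cahn with a transport/forcing term treated by Takasao--Tonegawa and by Chen's integral estimates.

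The second step records a priori bounds. I use the maximum principle component-wise to get $0\le u_i\le1$. At $u_i=0$ or $u_i=1$ the forcing vanishes because $\sqrt{2W}=0$, so the box $[0,1]^n$ is invariant. Next comes the energy identity \eqref{eq:EDI_loosely}, giving $E_\eps\le E_0$ and $\int_0^T\!\int \eps|\partial_t\uu_\eps|^2\le E_0$. From the penalty term I get
\[
\int\Big(\sigma-\sum_j\phi(u_j)\Big)^2\,\dd x\le 2E_0\eps^\alpha .
\]
Combining this with $\int \frac1\eps W(u_i)\le E_0$ yields $\|g_i\|_{L^2}^2\lesssim \eps^{-2\alpha}\cdot\eps\cdot\eps^{\alpha}=\eps^{1-\alpha}$ in the appropriate scaling, where $\eps\int|g_i/\eps|^2\,\dd x\lesssim \eps^{-\alpha}$. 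More useful is a pointwise bound: parabolic regularity (after rescaling by $\eps$) together with the gradient bound (iii) gives $\eps|\nabla u_i|\le C$ and $|g_i|\le C\eps^{-\alpha}$.

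The third step is a pointwise upper bound for $\xi_i$ via the maximum principle, in the spirit of Ilmanen and of Mizuno--Tonegawa / Takasao--Tonegawa. I compute $(\partial_t-\Delta)\xi_i$ and use the modified quantity $\xi_i-G(u_i)$ with a small correction $G$. The forcing contributes terms of size $\eps^{-1}|\nabla g_i|\,\eps|\nabla u_i|$ and $\eps^{-1}|g_i|\,|W'|$. Comparing these with the good term $-\frac{2}{\eps}\xi_i^2$-type absorption should give $\sup\xi_i\le C\eps^{-\frac{1+\alpha}{2}}$, consistent with the well-prepared assumption (iv), which is preserved. This alone is too weak for \eqref{eq:integral discprepancy positive vanishes}, since $\eps^{-(1+\alpha)/2}\cdot|\{\text{layer}\}|\sim\eps^{(1-\alpha)/2}\cdot\eps^{-1}\cdot\eps$... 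Precisely: the positive part is bounded by $C\eps^{-(1+\alpha)/2}$ on a set whose measure must be controlled.

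The fourth step, and the main obstacle, is the integral estimate. I follow Chen's approach: test the scalar equation to show that $\int_0^T\!\int(\xi_i)_+$ is controlled by the forcing. Concretely, I use the quantity $\eps|\nabla u_i|-\sqrt{2W(u_i)}/\eps^{0}$ and the identity
\[
\xi_i=\tfrac12\Big(\sqrt\eps|\nabla u_i|-\tfrac{1}{\sqrt\eps}\sqrt{2W(u_i)}\Big)\Big(\sqrt\eps|\nabla u_i|+\tfrac1{\sqrt\eps}\sqrt{2W(u_i)}\Big),
\]
so it suffices to show $\int_0^T\!\int \big(\sqrt\eps|\nabla u_i|-\eps^{-1/2}\sqrt{2W(u_i)}\big)_+^2\to0$. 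Away from the wells and with $\eps|\nabla u_i|$ large relative to $\sqrt{2W}$, the solution is nearly a one-dimensional profile in the direction of $\nabla u_i$. Chen's estimate bounds the excess via $\int\eps|\partial_tu_i|^2$ and the forcing; the forcing enters through $\int \frac{1}{\eps}|g_i|^2\eps\lesssim\eps^{1-\alpha}\cdot(\ldots)$, and since $\alpha<1$ this tends to zero. The delicate part is the region where $W(u_i)$ is tiny but $\nabla u_i\ne0$, that is, near the wells. There I use the pointwise bound from the third step, $\xi_i\le C\eps^{-(1+\alpha)/2}$, together with the smallness of the set where $u_i$ is within $\eps^{\beta}$ of the wells but has non-negligible gradient. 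This smallness comes from the energy bound and a coarea argument with $\phi(u_i)$.

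Summing over $i$ gives \eqref{eq:integral discprepancy positive vanishes}. Then \eqref{eq:positive_part_of_discrepancy_weak-star_to_zero} follows immediately, since a nonnegative measure whose total mass on $\torus\times(0,T)$ tends to zero converges weakly-$\ast$ to zero.
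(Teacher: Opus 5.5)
Your skeleton is the paper's. You reduce to the scalar discrepancies via $F_\eps\ge\frac1\eps\sum_iW(u_i)$, apply Chen's integral estimate to each component for a.e.\ $t$ with the coupling treated as forcing, and note that the forcing is harmless because $\alpha<1$.

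However, Step~4 misreads Chen's result. Theorem~\ref{thm:chen} is a \emph{global} estimate on all of $\torus$: for every $\eta\in(0,\eta_0]$ and $\eps\le 1/M(\eta)$,
\[
\int_{\torus}(\xi_i)_+\,\dd x\le\eta\int_{\torus}\Big(\frac\eps2|\nabla u_i|^2+\frac1\eps W(u_i)\Big)\,\dd x+M(\eta)\,\eps\int_{\torus}v^2\,\dd x,\qquad v=-\eps\partial_t u_i+g_i .
\]
So there is no separate near-well region to treat, and neither the pointwise bound of Step~3 nor a coarea argument is needed. You integrate in $t$, let $\eps\to0$ with $\eta$ fixed to get $\limsup_{\eps\to0}\int_0^T\!\int(\xi_i)_+\le\eta TE_0$, and then let $\eta\to0$.

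The near-well argument you sketch would not close anyway. The energy only gives $|\{\eps|\nabla u_i|\ge\lambda\}|\lesssim\eps\lambda^{-2}$, so the pointwise bound contributes $\eps^{(1-\alpha)/2}\lambda^{-2}$ on that set. On the complement you only know $(\xi_i)_+\le\lambda^2/(2\eps)$, on a set of measure $O(1)$. The two bounds multiply to $\frac12\eps^{-(1+\alpha)/2}$, so no choice of $\lambda$ makes both small. Exploiting the pointwise bound requires the whole machinery of Section~\ref{sec:vanishing_discrepancy}: density bounds via monotonicity and the stopping time $T_\eps$, lower density bounds, and exponential gradient decay where $W''\ge c$ (cf.\ Lemma~\ref{lem4.4}).

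Your forcing bookkeeping also needs fixing. What must vanish is $\eps\int_0^T\!\int_{\torus}v^2$, and the prefactor $\eps$ matters.
\begin{itemize}
\item The time-derivative part is at most $2\eps^2\int_0^T\!\int\eps|\partial_t u_i|^2\le 2\eps^2E_0$ by \eqref{eq:EDI}.
\item For $g_i$ you must pair an $L^\infty$ bound with an $L^1$ bound. From $0\le u_j\le1$ you get $|\sigma-\sum_j\phi(u_j)|\le C$, and the energy gives $\int W(u_i)\,\dd x\le\eps E_0$. Hence $\eps\int_0^T\!\int g_i^2\le C\eps^{1-2\alpha}\int_0^T\!\int W(u_i)\le CT\eps^{2-2\alpha}E_0\to0$.
\end{itemize}
Your Step~2 bound $\|g_i\|_{L^2}^2\lesssim\eps^{-2\alpha}\cdot\eps\cdot\eps^{\alpha}$ multiplies two $L^1$ bounds (on $W(u_i)$ and on the penalty), which is not legitimate. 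Moreover, with the unweighted $\int g_i^2$ you write in Step~4, the available bound $\eps^{1-2\alpha}$ does not vanish for $\alpha\ge\frac12$.

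With Chen's theorem used as a global black box, the corrected forcing estimate, and Step~3 deleted, your proposal becomes the paper's proof.
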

The estimate \eqref{eq:integral discprepancy positive vanishes} from Theorem~\ref{thm:positive part of disrepancy measure} is crucial, as the discrepancy measure appears in the monotonicity formula (cf.\ Lemma~\ref{lemma:monotonicity formula} below). 
Such an estimate has not been available in the literature for any type of vectorial Allen--Cahn system.
While the resulting weak-$\ast$ convergence in~\eqref{eq:positive_part_of_discrepancy_weak-star_to_zero} allows testing against continuous functions, the kernel $\frac1{s-t}\rho_{(y,s)}(x,t)\chi_{\{0<t<s\}}$ against which the measure is integrated in the monotonicity formula has a singularity as $t\to s$. 
We prove that also these test functions are allowed in the convergence~\eqref{eq:positive_part_of_discrepancy_weak-star_to_zero}. This comes from more refined estimates on the positive part of the discrepancy measure, see Lemma~\ref{lemma9} and Corollary~\ref{cor:discrepancy decay} below. 
These refined estimates, together with the monotonicity formula then imply the vanishing of the negative part of the discrepancy measure, so that the full discrepancy measure vanishes in the limit.
\begin{theorem}\label{thm:discrepancy measure full}
 Under the conditions of Theorem~\ref{thm:positive part of disrepancy measure}, the discrepancy measure itself vanishes in the limit: 
  \begin{equation}
    \dd \xi^\eps \stackrel{\ast}{\rightharpoonup} 0 \quad\text{as }\eps\to0.
  \end{equation}
\end{theorem}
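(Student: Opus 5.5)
We follow Ilmanen's strategy, with the missing pointwise upper bound on the discrepancy replaced by Theorem~\ref{thm:positive part of disrepancy measure} and its refined versions (Lemma~\ref{lemma9} and Corollary~\ref{cor:discrepancy decay}). Since $\xi^\eps = \xi^\eps_+ - \xi^\eps_-$ and $\xi^\eps_+\stackrel{\ast}{\rightharpoonup}0$, it remains to show that every subsequential weak-$\ast$ limit $\xi_-$ of $\xi^\eps_-$ vanishes. We have $0\le \xi^\eps_-\le \omega^\eps_t\,\dd t$, so $\xi_-$ is a nonnegative Radon measure, absolutely continuous with respect to $\dd\omega_t\,\dd t$. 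It therefore suffices to show that for $\dd\omega_t\,\dd t$-a.e.\ $(y,s)$ the density of $\xi_-$ vanishes.

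\textbf{Step 1: monotonicity formula for the limit.} From the $\eps$-monotonicity formula (Lemma~\ref{lemma:monotonicity formula}), the error term is
\[
\int_t^s\int \frac{1}{2(s-\tau)}\rho_{(y,s)}\,\dd\xi^\eps .
\]
The positive part has the favourable sign for proving monotonicity only up to the bad part, and the refined decay estimates of Corollary~\ref{cor:discrepancy decay} show that its contribution near the singular time $\tau\to s$ tends to zero. (Those estimates give roughly $\xi^\eps_+\lesssim \eps^{\beta}$-small pointwise or in integrated form, which is integrable against $(s-\tau)^{-1}\rho$ once $s-\tau\gtrsim\eps^{\gamma}$; the region $s-\tau\lesssim \eps^\gamma$ is handled by the uniform density bounds.) Passing to the limit, we obtain for all $0\le t<s$
\[
\int\rho_{(y,s)}(\cdot,s')\,\dd\omega_{s'}\Big|_{t}^{s'}+\int_t^{s'}\!\!\int\frac{\rho_{(y,s)}}{2(s-\tau)}\,\dd\xi_-(x,\tau)\le 0 ,
\]
together with the upper density bounds propagated from $D_0$. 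In particular
\[
\int_0^s\int\frac{\rho_{(y,s)}(x,\tau)}{s-\tau}\,\dd\xi_-(x,\tau)\le C\,(D_0,E_0) <\infty
\]
for every $(y,s)$.

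\textbf{Step 2: integrate the finiteness.} Integrating this bound in $(y,s)$ against $\dd\omega_s\,\dd s$ over $\torus\times(0,T)$ and using Fubini gives
\[
\int\!\!\int \Big(\int_\tau^T\!\!\int \frac{\rho_{(y,s)}(x,\tau)}{s-\tau}\,\dd\omega_s(y)\,\dd s\Big)\,\dd\xi_-(x,\tau)<\infty .
\]
Following Ilmanen, the lower density bound is what matters here: it comes from the clearing-out lemma/positive density of $\omega$ on its support, which the paper should provide via the same Tonegawa-type estimates. Using it, for $\dd\omega_t\,\dd t$-a.e.\ $(x,\tau)$ in the support we have $\int\rho\,\dd\omega_s\ge c>0$ for $s$ slightly above $\tau$. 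The inner integral then behaves like $\int_\tau \frac{c}{s-\tau}\,\dd s=\infty$ wherever $\omega$ has positive density. Hence $\xi_-$ is concentrated on a set where $\omega$ has zero density, and that set is $\omega\,\dd t$-null. By absolute continuity, $\xi_-=0$.

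\textbf{Main obstacle.} The delicate point is Step 1: justifying the passage to the limit of the singular kernel against $\xi^\eps_+$ uniformly near $\tau=s$. This requires quantitative smallness of $\xi^\eps_+$ at scales $\sqrt{s-\tau}\gg\eps$, which we will take from Lemma~\ref{lemma9}. A second concern is the lower density bound needed in Step 2; we plan to derive it from the monotonicity formula combined with a clearing-out argument, as in Ilmanen's scalar proof.
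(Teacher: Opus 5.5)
Your architecture matches the paper's: reduce to $\xi_-$ via Theorem~\ref{thm:positive part of disrepancy measure}, pass to the limit in \eqref{MF2_a} to get $\int\frac{\rho_{(y,s)}}{2(s-t)}\,\dd\xi_-\le C$, integrate against $\dd\omega_s\,\dd s$ with Fubini, and show $\xi_-$ lives on an $\omega$-null set. Step 1 is fine once you simply cite Corollary~\ref{cor:discrepancy decay}. One correction to your heuristic there: the pointwise bound \eqref{discrepancy1} is $C\eps^{-\beta}$, which is large, not small. The near-singular window $s-\tau\lesssim\eps^{2\beta'}$ is handled by this bound times the short time length, not by density bounds.

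The gap is in Step 2. Write $f(s):=\int\rho_{(y,s)}(x,\tau)\,\dd\omega_s(y)$. You claim that for $\omega$-a.e.\ $(x,\tau)$ one has $f(s)\ge c$ for \emph{all} $s$ slightly above $\tau$, and attribute this to clearing out. That is not what the clearing-out machinery gives.
\begin{itemize}
\item Lemma~\ref{lemma:clearing out} only says that smallness of $f$ at one scale produces a hole in $\supp\omega$ at the later time $2s-\tau$.
\item The covering argument behind Lemma~\ref{Lower bounds on forward Gaussian density} needs holes at all small scales. It therefore yields only $\omega(Z^-)=0$, with $Z^-$ defined via $\limsup_{s\downarrow\tau}f(s)<\delta_0$. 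So you get $\limsup_{s\downarrow\tau} f(s)\ge\delta_0$ for $\omega$-a.e.\ point, not a liminf bound.
\item The spatial lower density bounds for time slices are unrelated to $f$.
\item $\limsup f\ge\delta_0$ alone does not make $\int_\tau\frac{f(s)}{s-\tau}\,\dd s$ diverge, so your conclusion does not follow.
\end{itemize}

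The missing ingredient is approximate monotonicity of $f$ across nearby scales. Since $f(s)=\int\rho_{(x,2s-\tau)}(y,s)\,\dd\omega_s(y)$, apply the limit of \eqref{MF2_a} with centre $(x,2s_1-\tau)$ between times $s_2<s_1$. This bounds $f(s_1)$ by a Gaussian integral of $\omega_{s_2}$ at a scale exceeding $s_2-\tau$ by a factor close to $1$. Comparing Gaussians with such scales, using the tail bound in Lemma~\ref{lemma:ilmanen}(ii) and the uniform density bound of Theorem~\ref{thm:density}, shows that $f(s_1)\ge\delta_0$ forces $f\ge\delta_0/2$ on $[s_1-\eta(s_1-\tau),s_1]$. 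This interval has fixed logarithmic length, so $\limsup f\ge\delta_0$ makes the inner integral infinite.

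Consequently, finiteness of the inner integral gives $\limsup_{s\downarrow\tau}f(s)<\delta_0$ (indeed $\lim f=0$) for $\xi_-$-a.e.\ point. Hence $\xi_-$ is carried by $Z^-$, and $\xi_-(Z^-)\le\omega(Z^-)=0$. This is exactly the step the paper imports from Ilmanen and Mizuno--Tonegawa. Without it, or with your unproven liminf claim in its place, the argument does not close.
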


The rest of the paper is organized as follows. In Section~\ref{sec:monotonicity}, we derive a monotonicity formula that contains the discrepancy measure as an error. 
To handle this error, in Section~\ref{sec:discrepancy upper bound}, we derive upper bounds on the discrepancy measure and the vanishing of its positive part, and prove in particular Theorem~\ref{thm:positive part of disrepancy measure}.
In Section~\ref{sec:compactness}, we prove standard compactness of the energy measures and the convergence of the Allen--Cahn system to some limit partition.
Section~\ref{sec:vanishing_discrepancy} is devoted to the proof of the vanishing of the discrepancy measure (Theorem~\ref{thm:discrepancy measure full}). This is the key ingredient to prove that the limit is a Brakke flow (Theorem~\ref{thm:ACtoBrakke}), which in turn, is done in Section~\ref{sec:convergence}. 
In Appendix~\ref{sec:Gamma-convergence}, we check the $\Gamma$-convergence of the energy $E_\eps$ to the optimal partition energy $E$, and in Appendix~\ref{sec:initial_data} we construct well-prepared initial data for the Allen--Cahn system.

\begin{notation*}
We denote by $|s| = \max\{s,-s\}$, $s_+=\max\{s,0\}$, and $s_-=\max\{-s,0\}$ the absolute value, positive, and negative part of a real number $s$, respectively.
The constant $\omega_d$ denotes the volume of the $d$-dimensional unit ball.
We denote by $\torus$ the $d$-dimensional flat torus, i.e., the cube $[0,1)^d$ equipped with periodic boundary conditions.
For a matrix $A\in \R^{n \times d}$, we denote by $A^\ast \in \R^{d\times n}$ its transpose. 
For two vectors $a,b\in \R^d$ and two matrices $A,B \in \R^{d\times d}$ we denote by $a\cdot b = a^\ast b = \sum_{i=1}^d a_ib_i$ and $A\cdotdot B = \trace(A^\ast B) = \sum_{i}^n\sum_{j=1}^d A_{ij}B_{ij}$ the standard scalar products, the induced norms are denoted by $|a|= \sqrt{a\cdot a}$ and $|A| = \sqrt{ A\cdotdot A}$.
We write $a\otimes b= ab^\ast = (a_ib_j)_{ij}$ for the  tensor product of two vectors $a, b\in \R^d$. 
For a function $f\colon \torus \to \R$ we denote by $\nabla f = (\partial_j f)_{j=1,\ldots, d}\in\R^d$ its gradient.
For a vector field $\mathbf f = (f_i)_{i=1}^n \colon \torus \to \R^n$ we denote by $\nabla \mathbf f = (\partial_j f_i)_{i=1,\ldots,n, j=1,\ldots,d} \in \R^{n\times d}$ its Jacobi matrix.
The Hessian of a function is $\nabla^2 f = \nabla \nabla f \in \R^{d\times d}$.
The divergence of a vector field $\mathbf f$ is denoted by $\nabla \cdot \mathbf f = \sum_{i=1}^d\partial_i f_i$.
\end{notation*}

\section{Monotonicity formula and discrepancy measure}
\label{sec:monotonicity}
The gradient-flow structure manifested in~\eqref{eq:EDI_loosely} has important implications: The total energy is non-increasing in time and for all $T\in(0,\infty)$
\begin{equation}\label{eq:EDI}
  E_\eps(\uu_{\eps}(\cdot,T)) + \int_0^{T}  \int_{\torus} \frac1\eps \big|\eps\Delta\uu_\eps - \partial_\uu F_\eps(\uu_\eps)\big|^2 \,\dd x  \,\dd t = E_\eps(\uu_{\eps,0}).
\end{equation}
However, we want to localize this with a test function to obtain an $\eps$-version of Brakke's inequality.
\begin{lemma}\label{lemma:epsBrakke}
  Let $\uu \colon \torus\times(0,\infty)\to \R^n$ solve the Allen--Cahn system~\eqref{eq:allen cahn}, then the energy measure
  \[
    \dd \omega^\eps_t := \Big(\frac\eps2 |\nabla \uu(\cdot,t)|^2 + F_\eps(\uu(\cdot,t))\Big)\,\dd x
  \]
  satisfies
  \begin{align}
    \notag
    \frac{\dd}{\dd t} \int_{\torus} \varphi  \,\dd \omega_t^\eps
    =
    - &\int_{\torus} \varphi \, \frac1\eps \big|\eps\Delta \uu {-} \partial_{\uu}F_\eps(\uu)\big|^2 \,\dd x
    \\
    - &\int_{\torus} \sum_{i=1}^n (I{-}\nu_i\otimes \nu_i) \cdotdot \nabla^2 \varphi \, \eps |\nabla u_i|^2\, \dd x 
    + \int_{\torus}   \Delta \varphi \,  \dd \xi^\eps_t
    \label{eq:epsBrakkedt}
  \end{align}
  for any $\varphi\in C^2(\torus;[0,\infty))$ and almost every $t>0$, where $\nu_i = \frac{\nabla u_i}{|\nabla u_i|} \in \S^{d-1}$ denotes the normal to the level sets of $u_i$.
\end{lemma}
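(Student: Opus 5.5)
The plan is a direct computation: differentiate the localized energy in time, use the equation to produce the dissipation term, and rewrite the remaining spatial terms through the stress-tensor identity that defines the discrepancy. We assume the solution is smooth enough (classical parabolic regularity on the torus) to differentiate under the integral and integrate by parts with no boundary terms.

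\emph{Step 1: time derivative and dissipation.} Differentiating $\int\varphi\,\dd\omega^\eps_t$ gives
\[
\int_{\torus}\varphi\big(\eps\nabla\uu\cdotdot\nabla\partial_t\uu+\partial_\uu F_\eps(\uu)\cdot\partial_t\uu\big)\,\dd x .
\]
Integrating the gradient term by parts yields
\[
\int_{\torus}\varphi\big(-\eps\Delta\uu+\partial_\uu F_\eps(\uu)\big)\cdot\partial_t\uu\,\dd x-\int_{\torus}\eps\,(\nabla\uu)^\ast\partial_t\uu\cdot\nabla\varphi\,\dd x .
\]
Substituting $\partial_t\uu=\frac1\eps(\eps\Delta\uu-\partial_\uu F_\eps)$ from \eqref{eq:allen cahn}, the first integral becomes exactly $-\int\varphi\frac1\eps|\eps\Delta\uu-\partial_\uu F_\eps|^2\,\dd x$. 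The second integral becomes
\[
-\int_{\torus}\nabla\varphi\cdot(\nabla\uu)^\ast\big(\eps\Delta\uu-\partial_\uu F_\eps(\uu)\big)\,\dd x .
\]

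\emph{Step 2: stress-tensor identity.} Use the pointwise identity
\[
(\nabla\uu)^\ast\big(\eps\Delta\uu-\partial_\uu F_\eps(\uu)\big)=\nabla\cdot T,
\qquad
T:=\eps(\nabla\uu)^\ast\nabla\uu-\Big(\tfrac\eps2|\nabla\uu|^2+F_\eps(\uu)\Big)I .
\]
This follows componentwise from $\nabla\cdot(\eps\nabla u_i\otimes\nabla u_i)=\eps\Delta u_i\nabla u_i+\frac\eps2\nabla|\nabla u_i|^2$ and the chain rule $\nabla(F_\eps(\uu))=(\nabla\uu)^\ast\partial_\uu F_\eps$. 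Integrating by parts once more gives $\int\nabla^2\varphi\cdotdot T\,\dd x$, with a plus sign since $-\int\nabla\varphi\cdot\nabla\cdot T=\int\nabla^2\varphi\cdotdot T$.

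\emph{Step 3: the discrepancy rewrite.} Write $(\nabla\uu)^\ast\nabla\uu=\sum_i|\nabla u_i|^2\nu_i\otimes\nu_i$. The $\omega$-density is $\frac\eps2|\nabla\uu|^2+F_\eps=\eps|\nabla\uu|^2-\big(\frac\eps2|\nabla\uu|^2-F_\eps\big)$, so
\[
T=-\sum_i\eps|\nabla u_i|^2(I-\nu_i\otimes\nu_i)+\Big(\frac\eps2|\nabla\uu|^2-F_\eps\Big)I .
\]
Pairing with $\nabla^2\varphi$ gives precisely the second and third terms of \eqref{eq:epsBrakkedt}, since $I\cdotdot\nabla^2\varphi=\Delta\varphi$. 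On $\{\nabla u_i=0\}$ the term vanishes regardless of how $\nu_i$ is defined.

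The only delicate point is justification rather than algebra: the computation requires regularity for a.e.\ $t$, i.e.\ $\partial_t\nabla\uu$ and $\nabla^2\uu$ locally integrable, which holds for classical solutions of this semilinear parabolic system with smooth nonlinearity. Beyond that, the main care is tracking the signs in the two integrations by parts.
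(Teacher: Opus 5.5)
Your proposal is correct and is essentially the paper's proof. The paper first derives a pointwise identity for $\partial_t\omega^\eps_t$ and tests it with $\varphi$ only afterwards, whereas you test with $\varphi$ from the start. In both, the key step is the stress-tensor identity \eqref{eq:IBPLuckhausModica}, which is exactly your $\nabla\cdot T$ with $T=\eps(\nabla\uu)^\ast\nabla\uu-\omega^\eps_t I$, followed by the same integration by parts and the same split $\omega^\eps_t=\eps|\nabla\uu|^2-\xi^\eps_t$.
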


\begin{proof}[Proof of Lemma~\ref{lemma:epsBrakke}]
   We compute (identifying $\omega^\eps_t$ with its density)
  \begin{align}
    \notag
    \partial_t \omega^\eps_t
    &= \eps\nabla \uu \cdotdot \nabla \partial_t \uu +  \partial_{\uu}F_\eps(\uu)\cdot \partial_t \uu 
    \\
    \notag
    &= \eps \nabla \cdot \big( (\nabla \uu)^\ast \partial_t \uu\big) - \eps \Delta  \uu \cdot  \partial_t \uu + \partial_{\uu} F_\eps(\uu)\cdot \partial_t \uu 
    \\
    \label{eq:IBPenergydensity}
    &=  \nabla \cdot \Big( (\nabla \uu)^\ast \big(\eps\Delta \uu  - \partial_{\uu} F_\eps(\uu)\big)\Big)
     -  \frac1\eps \big|\eps\Delta \uu  - \partial_{\uu} F_\eps(\uu)\big|^2.
  \end{align}
  Using 
  \begin{align}
    \notag
    (\nabla \uu)^\ast (\eps \Delta \uu  -\partial_{\uu} F_\eps(\uu) )
  &= \nabla \cdot \big(\eps(\nabla \uu)^\ast \nabla \uu\big) - \nabla \big(\frac\eps2 |\nabla \uu|^2 \big) -\nabla (F_\eps \circ \uu)
  \\
  \label{eq:IBPLuckhausModica}
  &= \nabla \cdot \big( \eps (\nabla \uu)^\ast \nabla \uu\big) - \nabla \omega^\eps_t,
  \end{align}
  we obtain an identity for the backward diffusion operator on $\omega^\eps$:
  \begin{align*}
     \partial_t \omega_t^\eps  = - \Delta \omega^\eps_t + \nabla \cdot \big(\nabla \cdot\big( \eps (\nabla \uu)^\ast \nabla \uu\big)\big) - \frac1\eps \big|\eps\Delta \uu  - \partial_{\uu} F_\eps(\uu)\big|^2.
  \end{align*}
  Hence, when testing with $\varphi$, integrating by parts, and using $(\nabla \uu)^\ast \nabla \uu = \sum_{i=1}^n \nabla u_i \otimes \nabla u_i$, we obtain the claim.
\end{proof}

Following Huisken~\cite{Huisken_monotonicity}, we introduce the inverse heat kernel 
\begin{align}\label{eq:heat kernel}
  \rho_{(y,s)}(x,t) := \frac{1}{(4\pi (s-t))^{\frac{d-1}{2}}} \exp\Big(-\frac{|x-y|^2}{4(s-t)}\Big)
\end{align}
with the scaling of the dimension $d-1$ of our moving interfaces, which solves a variant of the backward diffusion equation
\begin{align}\label{eq:rho inverse heat equation}
  \partial_t \rho + (I-\nu\otimes \nu) \cdotdot \nabla^2 \rho + \frac{(\nu \cdot \nabla \rho)^2}{\rho} = 0
\end{align}
for any $\nu \in \mathbb{S}^{d-1}$.
Similar to Ilmanen~\cite{Ilmanen_allencahn}, we have the following almost-monotonicity formula for our Allen--Cahn system~\eqref{eq:allen cahn} in which the discrepancy measure appears as an error.

\begin{lemma}\label{lemma:monotonicity formula}
  Let $\uu \colon \torus\times(0,\infty)\to \R^n$ solve the Allen--Cahn system~\eqref{eq:allen cahn}, let $y\in \torus$ and $s>0$, and let $\rho_{(y,s)}$ denote the inverse heat kernel~\eqref{eq:heat kernel}. 
  Then, identifying $\omega^\eps_t$ and $\xi^\eps_t$ with their periodic extensions to $\R^d$, we have
\begin{align}
  \frac{\dd}{\dd t} \int_{\R^d} \rho_{(y,s)}(x,t)  \,\dd \omega^\eps_t(x)
  \leq
  \frac1{2(s-t)} \int_{\R^d} \rho_{(y,s)}(x,t)\,\dd \xi^\eps_t(x)
\end{align}
for any $t\in(0,s)$. Hence, decomposing $\xi^\eps_t = \xi^\eps_{t,+} - \xi^\eps_{t,-}$ into its positive and negative part, we get
\begin{align}
  \frac{\dd}{\dd t} \int_{\R^d} \rho_{(y,s)} (x,t) \,\dd \omega^\eps_t(x)
  &+\frac1{2(s-t)} \int_{\R^d} \rho_{(y,s)}(x,t) \,\dd \xi^\eps_{t,-} (x)
  \leq 
  \frac1{2(s-t)} \int_{\R^d} \rho_{(y,s)}(x,t)\,\dd \xi^\eps_{t,+} (x).
\end{align}
\end{lemma}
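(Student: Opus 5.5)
We will apply Lemma~\ref{lemma:epsBrakke} with the time-dependent test function $\varphi(x,t)=\rho_{(y,s)}(x,t)$, working on $\R^d$ with the periodic extensions. Since $\rho$ decays like a Gaussian and $\omega^\eps_t$ is locally uniformly bounded, all integrals converge and integration by parts is justified (e.g.\ by cutting off at large radius and letting the radius tend to infinity). The time derivative of $\varphi$ adds the term $\int \partial_t\rho\,\dd\omega^\eps_t$, so
\begin{align*}
\frac{\dd}{\dd t}\int \rho\,\dd\omega^\eps_t = &-\int \rho\,\frac1\eps|\eps\Delta\uu-\partial_\uu F_\eps(\uu)|^2\,\dd x
-\sum_{i=1}^n\int (I-\nu_i\otimes\nu_i)\cdotdot\nabla^2\rho\,\eps|\nabla u_i|^2\,\dd x\\
&+\int\Delta\rho\,\dd\xi^\eps_t+\int\partial_t\rho\,\dd\omega^\eps_t .
\end{align*}

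Next we rewrite $\omega^\eps_t=\sum_i\eps|\nabla u_i|^2\,\dd x-\xi^\eps_t$, since $\frac\eps2|\nabla\uu|^2+F_\eps=\eps|\nabla\uu|^2-(\frac\eps2|\nabla\uu|^2-F_\eps)$. The $\partial_t\rho$ term then splits into a part against $\eps|\nabla u_i|^2$ and a part $-\int\partial_t\rho\,\dd\xi^\eps_t$. For each $i$, wherever $\nabla u_i\neq0$, we apply \eqref{eq:rho inverse heat equation} with $\nu=\nu_i$. This gives
\[
\partial_t\rho+(I-\nu_i\otimes\nu_i)\cdotdot\nabla^2\rho=-\frac{(\nu_i\cdot\nabla\rho)^2}{\rho}.
\]
On the set where $\nabla u_i=0$ the $i$-th term vanishes anyway. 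The discrepancy terms combine to $\int(\Delta\rho-\partial_t\rho)\,\dd\xi^\eps_t$. A direct computation from \eqref{eq:heat kernel} gives
\[
\partial_t\rho+\Delta\rho=-\frac{\rho}{2(s-t)},
\]
because $\rho$ carries the $(d-1)$-dimensional normalization in $d$ dimensions. Hence $\Delta\rho-\partial_t\rho=-\frac{\rho}{2(s-t)}-2\partial_t\rho$. A cleaner way to organize this is to use $(I-\nu\otimes\nu)\cdotdot\nabla^2\rho=\Delta\rho-\nu\cdot\nabla^2\rho\,\nu$ and keep track so that exactly $\frac{\rho}{2(s-t)}\,\dd\xi^\eps_t$ remains. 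Tracking the sign convention of the statement is the bookkeeping step I expect to be most delicate.

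Finally we complete the square. We expand $\eps\Delta u_i-\partial_{u_i}F_\eps$ and use the first-variation identity \eqref{eq:IBPLuckhausModica}, i.e.\ $\int\nabla\rho\cdot(\nabla\uu)^\ast(\eps\Delta\uu-\partial_\uu F_\eps)$ written two ways. This combines the negative terms into
\[
-\int\frac1\eps\Big|\eps\Delta\uu-\partial_\uu F_\eps(\uu)-\frac{(\nabla\uu)\nabla\rho}{\rho}\,\eps\Big|^2\rho\,\dd x ,
\]
plus the leftover $\sum_i\eps\frac{(\nu_i\cdot\nabla\rho)^2}{\rho}|\nabla u_i|^2$ versus $\eps\frac{|(\nabla\uu)\nabla\rho|^2}{\rho}$. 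Since the components are summed separately, $|(\nabla\uu)\nabla\rho|^2=\sum_i(\nabla u_i\cdot\nabla\rho)^2$, so these match exactly. This is where the componentwise structure of $(\nabla\uu)^\ast\nabla\uu=\sum_i\nabla u_i\otimes\nabla u_i$ is essential.

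The main obstacle is making this completion of squares and the discrepancy bookkeeping close with exactly the coefficient $\frac1{2(s-t)}$, following Ilmanen's scalar computation line by line. Once the nonpositive square is dropped, the first inequality follows. The second is immediate by writing $\xi^\eps_t=\xi^\eps_{t,+}-\xi^\eps_{t,-}$ and using $\rho>0$.
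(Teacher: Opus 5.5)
You use the same ingredients as the paper: test the energy identity with $\rho$, complete a square via \eqref{eq:IBPLuckhausModica}, substitute $\omega^\eps_t=\eps|\nabla\uu|^2\,\dd x-\xi^\eps_t$, and use \eqref{eq:rho inverse heat equation} componentwise together with $\partial_t\rho+\Delta\rho=-\frac{\rho}{2(s-t)}$. In the order you propose, however, the argument does not close, and it fails exactly at the step you set aside as bookkeeping.

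Your first display is correct. Substituting $\omega^\eps_t=\sum_i\eps|\nabla u_i|^2\,\dd x-\xi^\eps_t$ into $\int\partial_t\rho\,\dd\omega^\eps_t$ puts the coefficient $\partial_t\rho-(I-\nu_i\otimes\nu_i)\cdotdot\nabla^2\rho$ in front of $\eps|\nabla u_i|^2$. But \eqref{eq:rho inverse heat equation} concerns $\partial_t\rho+(I-\nu_i\otimes\nu_i)\cdotdot\nabla^2\rho$, so it cannot be applied there. Likewise, the coefficient of $\xi^\eps_t$ is $\Delta\rho-\partial_t\rho$, not $-(\Delta\rho+\partial_t\rho)=\frac{\rho}{2(s-t)}$. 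Both second-order terms carry the wrong sign, and rewriting $(I-\nu\otimes\nu)\cdotdot\nabla^2\rho=\Delta\rho-\nu\cdot\nabla^2\rho\,\nu$ cannot change that.

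Your final step cannot repair this as described. Set $\ff_\eps:=-(\eps\Delta\uu-\partial_\uu F_\eps(\uu))$. If your middle step were valid, you would already have $-\int\frac{\rho}{\eps}|\ff_\eps|^2\,\dd x-\int\eps\frac{|\nabla\uu\nabla\rho|^2}{\rho}\,\dd x+\int\frac{\rho}{2(s-t)}\,\dd\xi^\eps_t$, and no cross term would be left to complete a square with. In fact this expression differs from the true one by the sign-indefinite term $2\int\nabla\rho\cdot(\nabla\uu)^\ast\ff_\eps\,\dd x$. Your square also has the wrong sign: it must be $|\eps\Delta\uu-\partial_\uu F_\eps(\uu)+\eps\frac{\nabla\uu\nabla\rho}{\rho}|^2=|\ff_\eps-\eps\frac{\nabla\uu\nabla\rho}{\rho}|^2$.

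Your remark about using \eqref{eq:IBPLuckhausModica} ``two ways'' is the right instinct; what it must accomplish is a sign flip. By \eqref{eq:IBPLuckhausModica} and an integration by parts, the two middle terms of Lemma~\ref{lemma:epsBrakke} with $\varphi=\rho$ add up to exactly $T:=\int\nabla\rho\cdot(\nabla\uu)^\ast\ff_\eps\,\dd x$. They are the transport term, already integrated by parts. The square needs $2T$, so write $T=2T-T$. Then $-\int\frac{\rho}{\eps}|\ff_\eps|^2\,\dd x+2T=-\int\frac{\rho}{\eps}|\ff_\eps-\eps\frac{\nabla\uu\nabla\rho}{\rho}|^2\,\dd x+\int\eps\frac{|\nabla\uu\nabla\rho|^2}{\rho}\,\dd x$. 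Meanwhile $-T=\sum_i\int(I-\nu_i\otimes\nu_i)\cdotdot\nabla^2\rho\,\eps|\nabla u_i|^2\,\dd x-\int\Delta\rho\,\dd\xi^\eps_t$, which is the pair of middle terms with both signs reversed.

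Only after this flip does the substitution $\omega^\eps_t=\eps|\nabla\uu|^2\,\dd x-\xi^\eps_t$ work. Using $|\nabla\uu\nabla\rho|^2=\sum_i(\nabla u_i\cdot\nabla\rho)^2$, as you rightly observe, it gives the coefficient $\frac{(\nu_i\cdot\nabla\rho)^2}{\rho}+(I-\nu_i\otimes\nu_i)\cdotdot\nabla^2\rho+\partial_t\rho=0$ for each $i$. It gives $-(\partial_t\rho+\Delta\rho)=\frac{\rho}{2(s-t)}$ for $\xi^\eps_t$. So the square must be completed before \eqref{eq:rho inverse heat equation} is invoked, not after.

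The paper avoids this detour by starting from \eqref{eq:IBPenergydensity}, where the cross term $\nabla\rho\cdot(\nabla\uu)^\ast\ff_\eps$ appears explicitly. It adds zero in the form $2T-T$ and rewrites only the subtracted copy via \eqref{eq:IBPLuckhausModica} and an integration by parts. This gives an exact identity, and dropping the square yields the lemma.
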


Here, again, just like in Ilmanen's work, the discrepancy measure needs to be controlled in order to extract information from this inequality. More precisely, we need an \emph{upper} bound on it.

\begin{proof}[Proof of Lemma~\ref{lemma:monotonicity formula}]
  Starting from the identity~\eqref{eq:IBPenergydensity} in the proof of Lemma~\ref{lemma:epsBrakke}, testing with $\rho=\rho_{(y,s)}$, setting $\ff_\eps:= -\big(\eps\Delta \uu  - \partial_{\uu} F_\eps(\uu)\big)$, and adding zero yields
  \begin{align*}
    \frac{\dd}{\dd t} \int_{\R^d} \rho \,\dd \omega^\eps_t 
    &= \int_{\R^d} \Big(- \rho  \frac1\eps |\ff_\eps |^2 
      +2 \nabla\rho \cdot (\nabla \uu)^\ast \ff_\eps
      + \nabla\rho \cdot \Big( (\nabla \uu)^\ast \big(\eps\Delta \uu  - \partial_{\uu} F_\eps(\uu)\big)\Big)
      + \partial_t \rho\,  \omega^\eps_t\Big).
  \end{align*}
  Completing the square and using~\eqref{eq:IBPLuckhausModica}, this is equal to
  \begin{align*}
    &\int_{\R^d} \Big(- \rho  \frac1\eps \Big|\ff_\eps - \eps\frac{\nabla \uu \nabla \rho}{\rho} \Big|^2 
      +\eps \frac{|\nabla \uu \nabla \rho|^2}{\rho}
      + \Delta \rho \, \omega^\eps_t - \eps \nabla^2 \rho \cdotdot (\nabla\uu)^\ast \nabla \uu
      + \partial_t \rho \, \omega^\eps_t\Big).
  \end{align*}
  Now we rewrite the remainder, denoting $\nu_i:=\frac{\nabla u_i}{|\nabla u_i|}$ and using $ \omega^\eps_t = \eps |\nabla \uu|^2 - \xi^\eps_t$, as
  \begin{align*}
      \eps \frac{|\nabla \uu \nabla \rho|^2}{\rho}
      &+ \Delta \rho \, \omega^\eps_t - \eps \nabla^2 \rho \cdotdot (\nabla\uu)^\ast \nabla \uu
      + \partial_t \rho \, \omega^\eps_t
      \\
      &= \sum_{i=1}^n \Big( \frac{|\nu_i \cdot \nabla \rho|^2}{\rho} 
         - \nabla^2 \rho \cdotdot \nu_i \otimes \nu_i + \partial_t \rho + \Delta \rho\Big) \eps |\nabla u_i|^2 - (\partial_t \rho +\Delta \rho)\xi^\eps_t.
  \end{align*}
  The term in the parentheses vanishes due to~\eqref{eq:rho inverse heat equation}, and we compute
  \[
    \partial_t \rho + \Delta \rho = -\frac{1}{2(s-t)} \rho,
    \]
  so that 
  \[
    \frac{\dd}{\dd t} \int_{\R^d} \rho \,\dd \omega^\eps_t 
    =
    -\int_{\R^d}\rho  \frac1\eps \Big|\ff_\eps - \eps\frac{\nabla \uu \nabla \rho}{\rho} \Big|^2 \,\dd x
    + \frac1{2(s-t)}\int_{\R^d} \rho \, \dd \xi^\eps_t,
  \]
  and in particular, the inequality from the lemma follows.
\end{proof}

\section{Proof of Theorem~\ref{thm:positive part of disrepancy measure}: Upper bound on the discrepancy}
\label{sec:discrepancy upper bound}
Before we prove the vanishing of the positive part of the discrepancy measure, let us first record a simple upper bound on it.

\begin{remark}
  We have
  \begin{align*}
    \frac\eps2 |\nabla \uu|^2 - F_\eps (\uu)
    &= \sum_{i=1}^n \Big( \frac\eps2 |\nabla u_i|^2 - \frac1\eps W(u_i)\Big) - \frac1{2\eps^\alpha} \Big( \sigma - \sum_{i=1}^n \phi (u_i)\Big)^2
    \\&
    \leq
    \sum_{i=1}^n \Big( \frac\eps2 |\nabla u_i|^2 - \frac1\eps W(u_i)\Big).
  \end{align*}
  Thus, in order to obtain~\eqref{eq:vanishing_of_discrepancy}, we only need to show
  \begin{align}
    \int_0^T \int_{\torus} \Big( \frac\eps2 |\nabla u_i|^2 - \frac1\eps W(u_i)\Big)_+ \,\dd x \, \dd t \longrightarrow 0 \quad \text{as $\eps\to0$} \quad \text{for all $i=1,\ldots,n$.} 
  \end{align}
\end{remark}

This remark allows us to resort to the following integral estimate due to Chen~\cite[Theorem 3.6]{Chen_global}, that gives an upper bound on the discrepancy measure in the scalar setting with a well-behaved right-hand side.

\begin{theorem}[Theorem 3.6 in~\cite{Chen_global}]\label{thm:chen}
  Let 
  \begin{align}
    \mathcal{K}_\eps := \Big\{ (u,v)\in H^2(\torus) \cap L^2(\torus) \,\Big|\, -\eps \Delta u +\frac1\eps W'(u) = v \text{ in } \torus\Big\}.
  \end{align}
  Then there exists a constant $\eta_0\in (0,1]$ and a 
  continuous, non-increasing function 
  \[
    M \colon (0,\eta_0] \to [0,\infty)
  \] 
  such that 
  for every $\eta \in (0,\eta_0]$, every $\eps \in (0,\frac1{M(\eta)}]$, 
  and every $(u_\eps,v_\eps) \in \mathcal{K}_\eps$
  \begin{align}
    \int_{\torus} \Big(\frac\eps2 |\nabla u_\eps|^2 - \frac1\eps W(u_\eps) \Big)_+\,\dd x
    \leq \eta \int_{\torus}\Big(\frac\eps2 |\nabla u_\eps|^2 +\frac1\eps W(u_\eps) \Big) \,\dd x
    +  M(\eta) \eps \int_{\torus} v_\eps^2\,\dd x.
  \end{align}
\end{theorem}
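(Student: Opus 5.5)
The plan is to argue as in Chen~\cite{Chen_global}: rescale to the interface scale $\eps$, use a compactness/contradiction argument on unit cubes, and rule out positive discrepancy in the limit by Modica's gradient bound for entire solutions. Set $x=\eps y$, $\tilde u(y)=u_\eps(\eps y)$ and $f(y)=\eps v_\eps(\eps y)$. Then $-\Delta\tilde u+W'(\tilde u)=f$ on the rescaled torus of side $1/\eps$. The rescaled energy and discrepancy densities are $e=\frac12|\nabla\tilde u|^2+W(\tilde u)$ and $\xi=\frac12|\nabla\tilde u|^2-W(\tilde u)$. Since $\dd x=\eps^d\,\dd y$ and $\eps\int v_\eps^2\,\dd x=\eps^{d-1}\int f^2\,\dd y$, all three terms in the claimed inequality carry the same prefactor $\eps^{d-1}$. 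It therefore suffices to prove
\[
\int \xi_+\,\dd y\le \eta\int e\,\dd y+M(\eta)\int f^2\,\dd y .
\]
I then tile the rescaled torus by unit cubes $Q$ and let $Q^\ast$ denote the concentric cube of side $3$, so the enlarged cubes overlap only boundedly many times. The inequality should be proved cube by cube in the form $\int_Q\xi_+\le \eta\int_{Q^\ast}e+M\int_{Q^\ast}f^2$, and then summed.

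Fix a cube $Q$ and split into cases. \emph{(a) Forcing-dominated cubes}, where $\int_{Q^\ast}f^2\ge\delta\int_{Q^\ast}e$ for a small $\delta=\delta(\eta)$. There I use the crude bound $\xi_+\le e$, which gives $\int_Q\xi_+\le\delta^{-1}\int_{Q^\ast}f^2$; this fixes $M\ge\delta^{-1}$. \emph{(b) Cubes where $\tilde u$ is large}, meaning $|\tilde u|$ exceeds a fixed bound somewhere on $Q$. Since $W'$ grows like $u^3$, interior elliptic estimates with $f\in L^2$ should show that $W(\tilde u)$ dominates $|\nabla\tilde u|^2$ there, up to $f$-terms. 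Hence $\int_Q\xi_+$ is again controlled by $\int_{Q^\ast}f^2$ plus a small multiple of the energy. \emph{(c) Near-well cubes}, where $\int_{Q^\ast}e$ is below a small threshold $\tau$. Then $\tilde u$ stays close to a single well, $0$ or $1$. Linearizing the equation around that well and using $W''>0$ there, local estimates should give $\int_Q|\nabla\tilde u|^2\le C\int_{Q^\ast}\big(W(\tilde u)+f^2\big)$. Since I need only $\xi_+$ and not the full gradient term, I will aim for the sharper bound $\int_Q\xi_+\le\eta\int_{Q^\ast}e+C\int_{Q^\ast}f^2$.

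\emph{(d) The main case}: $\tilde u$ is bounded on $Q^\ast$, $\int_{Q^\ast}e\ge\tau$, and $\int_{Q^\ast}f^2<\delta\int_{Q^\ast}e$. Here I argue by contradiction. Suppose there are solutions $(\tilde u_k,f_k)$ with $\int_{Q}\xi_{k,+}>\eta\int_{Q^\ast}e_k$ while $\int_{Q^\ast}f_k^2/\int_{Q^\ast}e_k\to0$. The $L^\infty$ bound and the energy lower bound $\tau$, together with $W^{2,2}$ and Hölder estimates, give subsequential convergence in $C^{1}_{\mathrm{loc}}$ to a solution of $-\Delta u+W'(u)=0$. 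To compare this limit with Modica's estimate $\frac12|\nabla u|^2\le W(u)$, which holds for bounded entire solutions, the cubes must be enlarged: I rerun the case split on cubes of side $R$, with $R=R(\eta)$ large, instead of side $1$. The limit is then a bounded solution on a large cube. Modica's argument, a maximum principle for $P=\frac12|\nabla u|^2-W(u)$, is localized with a cutoff as in Hutchinson--Tonegawa~\cite{HutchinsonTonegawa}. It should give $\sup_{Q}P_+\le C/R$, hence $\int_Q\xi_+\le\frac\eta2\int e$ for $R$ large. This contradicts the assumption in the limit.

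Summing over cubes with bounded overlap proves the inequality. The thresholds $\tau,\delta,R$ depend only on $\eta$, which yields $M(\eta)$ and the restriction $\eps\le 1/M(\eta)$. I expect the main obstacle to be step (d): the compactness needs uniform $C^1$ control and a genuine localization of Modica's bound with an error uniform in the forcing. A secondary difficulty is case (b), because no a priori $L^\infty$ bound is assumed in $\mathcal K_\eps$. I would first try to handle it by testing the equation with $(\tilde u-1)_+$ and $\tilde u_-$ against cutoffs.
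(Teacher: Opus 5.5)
The paper does not prove this statement. It quotes \cite[Theorem~3.6]{Chen_global}, merging Chen's two functions into $M=\max\{M_1,M_2\}$ and noting that the Neumann argument transfers to $\torus$. Your reconstruction therefore has to stand on its own, and it has a genuine gap in its decisive step (d) and the final summation.

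\textbf{The scale mismatch.} The localized Modica bound $\sup_Q P_+\le C/R$ (or any polynomial rate $R^{-\gamma}$) is pointwise. It cannot be turned into $\int_Q\xi_+\le\frac\eta2\int e$ under either reading of your enlargement.
\begin{enumerate}[(i)]
\item If $Q$ has side $R$ and $Q^\ast$ side $3R$, so overlaps stay bounded, you only get $\int_Q\xi_+\le CR^{d-1}$. Your case hypothesis gives only $\int_{Q^\ast}e\ge\tau$, and even a single flat interface through $Q^\ast$ carries energy of order only $\sigma R^{d-1}$. So the pointwise bound never yields $\int_Q\xi_+\le\frac\eta2\int_{Q^\ast}e$, and the limit produces no contradiction.
\item If $Q$ stays a unit cube and only the comparison cube has side $R$, the single-cube contradiction can be closed once $C/R<\eta\tau$. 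But then the cubes $Q^\ast$ overlap about $R^d\gtrsim(\eta\tau)^{-d}$ times, and summing gives $\frac\eta2R^d\int e$, which is not small.
\end{enumerate}
The same problem already appears in (c). With a fixed side-$3$ enlargement, the near-well solution $\tilde u\approx2\delta\sinh(\sqrt2\,y_1)$ has constant discrepancy $4\delta^2$ and a fixed positive ratio $\int_Q\xi_+/\int_{Q^\ast}e$, not an arbitrarily small one.

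\textbf{The missing idea.} You need to split \emph{inside} each cube into the transition set $\{\theta\le\tilde u\le1-\theta\}$ and its complement, and use two different mechanisms whose overlap costs are affordable.
\begin{enumerate}[(i)]
\item Away from the transition set, coercivity of $-\Delta+W''$ near the wells gives exponential decay: for $f=0$ the energy in $B_r$ grows at least like $e^{cr}$. A comparison radius $L\sim\log(1/\eta)$ then suffices, and the overlap factor $L^d$ is harmless.
\item Near the transition set, a unit ball centred at a transition point carries energy at least $c_0$ when $f$ is locally small. Hence the $L$-neighbourhood of the transition set has volume $\lesssim L^d c_0^{-1}\int e$. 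There you only need \emph{pointwise} smallness $\xi_+\le\eta'$ with $\eta' L^d\lesssim\eta c_0$.
\end{enumerate}
That pointwise smallness is exactly what your compactness/Modica argument can deliver. It works on a scale $R(\eta')$ whose large overlap factor enters only the coefficient $M(\eta)$ of $\int f^2$, which may be as large as needed. This is the same architecture as the paper's parabolic Lemma~\ref{lem4.4}: a Vitali covering of the transition set controlled by the density lower bound of Lemma~\ref{lemma43}, and exponential damping off it coming from $W''\ge C$.

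\textbf{A secondary issue.} $\mathcal K_\eps$ carries no $L^\infty$ bound and $f$ is only in $L^2$. So the $C^1_{\mathrm{loc}}$ compactness and pointwise closeness to a single well that you invoke do not follow from $W^{2,2}$ estimates when $d\ge4$. For the paper's application it would suffice to prove the estimate under $0\le u\le 1$, which the maximum principle provides there.
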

Chen's original statement contains two such functions $M_1$ and $M_2$. 
As the result is purely qualitative in these functions, we simply take $M=\max\{M_1,M_2\}$.
Moreover, his result is stated on a bounded domain with Neumann boundary conditions, but it applies verbatim in our case of periodic boundary conditions.

Theorem~\ref{thm:chen} becomes extremely useful if, next to a uniform energy bound 
\[
  \sup_{\eps\in (0,1]}\int_{\torus}\Big(\frac\eps2 |\nabla u_\eps|^2 +\frac1\eps W(u_\eps) \Big) \,\dd x <+\infty,
\] 
one has the vanishing of the limit
\begin{equation}
    \lim_{\eps\to0} \, \eps \int_{\torus} v_\eps^2\,\dd x  =0.
  \end{equation}
Indeed, then one can take first $\eps\to0$ and then $\eta \to0$ to obtain 
\[
  \lim_{\eps\to0}\int_{\torus} \Big(\frac\eps2 |\nabla u_\eps|^2 - \frac1\eps W(u_\eps) \Big)_+\,\dd x =0.
\]

This is precisely the proof strategy for Theorem~\ref{thm:positive part of disrepancy measure}, which in turn is the key to passing to the limit in the Allen--Cahn system.

\begin{proof}[Proof of Theorem~\ref{thm:positive part of disrepancy measure}]
  Let $i\in \{1,\ldots, n\}$ and set
  \[
    v_\eps := -\eps \partial_t u_{\eps,i}+ \frac{\sqrt{2W(u_{\eps,i})}}{\eps^\alpha} \Big(\sigma - \sum_{j=1}^n \phi(u_{\eps,j}) \Big).
  \]
  Then $(u_{\eps,i}(\cdot,t),v_\eps(\cdot,t)) \in \mathcal{K}_\eps$ for almost every $t\in(0,T)$.

  We claim that
  \begin{equation}\label{eq:estimate v_eps}
    \eps \int_0^T \int_{\torus} v_\eps^2 \,\dd x \, \dd t 
    \leq \big(2 \eps + 4(n-1) \sigma T \eps^{2-2\alpha}\big) E_\eps(\uu_{\eps,0}).
  \end{equation}
  Indeed, since $W'(0)=W'(1)=0$ and $\sqrt{2W(0)} =\sqrt{2W(1)} = 0$, by the maximum principle, we have
  \[
    0 \leq u_{\eps,j} \leq 1  \quad \text{for all } j=1,\ldots,n.
  \]
  Therefore, we have the uniform bound
  \[
    \Big|\sigma - \sum_{j=1}^n \phi(u_{\eps,j}) \Big| \leq (n-1)\sigma
  \]
  and hence by Young's inequality and the energy-dissipation inequality~\eqref{eq:EDI}
  \begin{align*}
    \int_0^T \int_{\torus} v_\eps^2 \,\dd x \, \dd t 
    &\leq 2\int_0^T\int_{\torus} \eps^2 |\partial_t \uu_\eps|^2\,\dd x\, \dd t
    + 2 (n-1)\sigma  T \sup_{t\in(0,T)} \int_{\torus} \frac2{\eps^{2\alpha}} W(u_{\eps,i})\,\dd x
    \\&
    \leq 
    2 E_\eps (\uu_{\eps,0}) + 4(n-1) \sigma T \frac{1}{\eps^{2\alpha-1}} E_\eps (\uu_{\eps,0}),
  \end{align*}
  which after multiplication by $\eps$ is precisely the claim~\eqref{eq:estimate v_eps}.

  Applying Theorem~\ref{thm:chen} for almost every time, integrating over $t\in(0,T)$, estimating the first right-hand side term by~\eqref{eq:EDI} and the second one by~\eqref{eq:estimate v_eps}, recalling the bound on the energy of the initial conditions, Definition~\ref{def:well-prepared}\eqref{item:Eeps0}, yields
  \[
    \int_0^T\int_{\torus} \Big( \frac\eps2 |\nabla \uu_\eps|^2 - F_\eps(\uu_\eps)\Big)_+ \dd x \,\dd t 
    \leq 
    \eta  T E_0 + M(\eta )\big(2 \eps + 4(n-1) \sigma T \eps^{2-2\alpha}\big) E_0
  \]
  for all $\eta\in(0,\eta_0]$ and $\eps\in (0,\min\{1,\frac1{M(\eta)}\}]$.
  Note that since $\alpha<1$, the second right-hand side term vanishes in the limit $\eps\to0$. 
  Hence, taking $\eps\downarrow0$ while keeping $\eta\in(0,\eta_0]$ fixed, we obtain
  \[
    \limsup_{\eps\to0}\int_0^T\int_{\torus} \Big( \frac\eps2 |\nabla \uu_\eps|^2 - F_\eps(\uu_\eps)\Big)_+ \dd x \,\dd t 
    \leq \eta T E_0.
  \]
  Since the left-hand side is independent of $\eta$, taking $\eta\to0$ yields the claim.
\end{proof}

\section{Compactness}
\label{sec:compactness}
In this section, we show the compactness of the functions, energy measures, and associated varifolds.
We first observe an almost-monotonicity of the energy measures.

\begin{lemma}[Semi-decreasing property]
  For all $\varphi \in C^2(\torus;[0,\infty))$ and all $\eps\in(0,1]$, the map
  \[
    (0,\infty) \ni t\mapsto \int_{\torus} \varphi \,\dd \omega^\eps_t - E_0\|\varphi\|_{C^2}t \quad \text{is monotonically non-increasing.}
  \]
\end{lemma}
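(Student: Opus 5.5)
The idea is to differentiate $t\mapsto \int_{\torus}\varphi\,\dd\omega^\eps_t$ using the localized energy identity of Lemma~\ref{lemma:epsBrakke}, and to bound every term that is not obviously nonpositive by $E_0\|\varphi\|_{C^2}$.

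For almost every $t>0$ and $\varphi\ge0$, identity~\eqref{eq:epsBrakkedt} gives
\[
\frac{\dd}{\dd t}\int_{\torus}\varphi\,\dd\omega^\eps_t
= -\int_{\torus}\varphi\,\frac1\eps\big|\eps\Delta\uu-\partial_\uu F_\eps(\uu)\big|^2\,\dd x
-\int_{\torus}\sum_{i=1}^n (I-\nu_i\otimes\nu_i)\cdotdot\nabla^2\varphi\,\eps|\nabla u_i|^2\,\dd x
+\int_{\torus}\Delta\varphi\,\dd\xi^\eps_t .
\]
The first term is nonpositive and is simply dropped.

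The remaining two terms are estimated pointwise against the energy density. Since $\frac\eps2|\nabla\uu|^2 - F_\eps(\uu) \le \frac\eps2|\nabla\uu|^2 + F_\eps(\uu)$, we have $|\xi^\eps_t|\le \omega^\eps_t$ as densities. Also $|I-\nu_i\otimes\nu_i|$ and $|\Delta\varphi|$ are bounded by a dimensional constant times $|\nabla^2\varphi|$, and $\eps|\nabla\uu|^2 \le 2\omega^\eps_t$. Together these give
\[
\frac{\dd}{\dd t}\int_{\torus}\varphi\,\dd\omega^\eps_t \le C\,\|\varphi\|_{C^2}\,\omega^\eps_t(\torus)\le C\,\|\varphi\|_{C^2}\,E_0 .
\]
The last inequality uses that the total energy is non-increasing by~\eqref{eq:EDI} and bounded initially by Definition~\ref{def:well-prepared}\eqref{item:Eeps0}.

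A caveat on normalization: $\omega^\eps_t$ in Theorem~\ref{thm:ACtoBrakke} carries the factor $\eps|\nabla\uu|^2$, whereas Lemma~\ref{lemma:epsBrakke} uses $\frac\eps2|\nabla\uu|^2$. In either case $\omega^\eps_t(\torus)\le 2E_\eps(\uu_\eps(t))\le 2E_0$. The resulting constant $C$ (depending on $d$ and this factor) has to be absorbed into the norm $\|\varphi\|_{C^2}$, for instance by defining that norm with appropriate weights. This constant bookkeeping is the only delicate point, and it does not affect the argument.

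Finally, the derivative bound holds for almost every $t$. Since $t\mapsto\int\varphi\,\dd\omega^\eps_t$ is absolutely continuous for the smooth solution, integrating from $t_1$ to $t_2$ gives
\[
\int\varphi\,\dd\omega^\eps_{t_2}-\int\varphi\,\dd\omega^\eps_{t_1}\le E_0\|\varphi\|_{C^2}(t_2-t_1),
\]
which is exactly the claimed monotonicity.
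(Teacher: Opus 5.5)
Your argument is valid, but it takes a different route from the paper's. As written, it proves the lemma only up to a dimensional constant.

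You start from Lemma~\ref{lemma:epsBrakke}, discard the dissipation term, and bound the Hessian pairing and the discrepancy term pointwise by the energy density. This loses a factor depending on $d$. With your bounds ($|I-\nu_i\otimes\nu_i|=\sqrt{d-1}$, $\eps|\nabla\uu|^2\le 2\omega^\eps_t$, $|\Delta\varphi|\le\sqrt d\,|\nabla^2\varphi|$) the factor is for example $2\sqrt{d-1}+\sqrt d$. Even an optimal pointwise bound on $\nabla^2\varphi\cdotdot\eps(\nabla\uu)^\ast\nabla\uu-\Delta\varphi\,\omega^\eps_t$ still leaves a factor $\sqrt d$. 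So what you actually prove is that $\int\varphi\,\dd\omega^\eps_t-C(d)E_0\|\varphi\|_{C^2}t$ is non-increasing. ``Absorbing $C(d)$ into the norm'' changes the statement rather than proving it.

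The paper avoids this loss by \emph{keeping} the dissipation:
\begin{itemize}
\item It writes $\frac{\dd}{\dd t}\int\varphi\,\dd\omega^\eps_t=-\int\eps\nabla\varphi\cdot(\nabla\uu)^\ast\partial_t\uu\,\dd x-\int\eps\varphi|\partial_t\uu|^2\,\dd x$.
\item It completes the square, so the cross term is absorbed by $-\int\eps\varphi|\partial_t\uu|^2$. What remains is $\int\eps\frac{|\nabla\uu\nabla\varphi|^2}{4\varphi}\,\dd x\le\int\frac{|\nabla\varphi|^2}{2\varphi}\,\dd\omega^\eps_t$.
\item It then uses the elementary inequality $\sup\frac{|\nabla\varphi|^2}{\varphi}\le2\sup|\nabla^2\varphi|$ for nonnegative $C^2$ functions.
\end{itemize}
This gives the derivative bound $\sup|\nabla^2\varphi|\,E_\eps(\uu_\eps(\cdot,t))\le\|\varphi\|_{C^2}E_0$ with constant one, independent of $d$. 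It needs neither the rewriting~\eqref{eq:IBPLuckhausModica} nor the bound $|\xi^\eps_t|\le\omega^\eps_t$.

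Your version has the merit of being an immediate corollary of Lemma~\ref{lemma:epsBrakke}. For the only use of the lemma, the compactness in Proposition~\ref{prop:compactness of energy measures} via Ilmanen's argument, the constant $C(d)$ is irrelevant.

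Finally, your normalization remark is slightly off. The identity of Lemma~\ref{lemma:epsBrakke} holds only for $\omega^\eps_t=(\frac\eps2|\nabla\uu|^2+F_\eps(\uu))\,\dd x$. For that normalization, $\omega^\eps_t(\torus)=E_\eps(\uu_\eps(\cdot,t))\le E_0$ directly, with no factor $2$. Your argument says nothing about the other normalization.
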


\begin{proof}
  Note that for any $\varphi \in C^2(\torus;[0,\infty))$, it holds\footnote{For $\eps>0$ fixed, $f_\eps:=\frac{|\nabla \varphi|^2}{\varphi+\eps}$, attains a maximum on $\torus$, and there we have $0= \nabla f_\eps = \frac{2\nabla^2 \varphi \nabla \varphi}{\varphi +\eps} - \frac{|\nabla \varphi|^2 \nabla \varphi}{(\varphi+\eps)^2}$. 
  If $\varphi\equiv 0$, there is nothing to prove; otherwise, the maximum of $f_\eps$ is positive, 
  so $\nabla\varphi$ cannot vanish at the maximum. 
  Hence 
  $\frac{|\nabla\varphi|^2}{\varphi+\eps} 
  = 2 \big| \nabla^2\varphi \frac{\nabla \varphi}{|\nabla \varphi|}\big|$. Taking $\eps\to0$ yields the uniform upper bound.}
  \begin{align}\label{eq:Dphi/phi vs D2phi}
    \sup \frac{|\nabla \varphi|^2}{\varphi} \leq 2 \sup |\nabla^2 \varphi|.
  \end{align}
  We compute
  \begin{align*}
    \frac{\dd}{\dd t} \int_{\torus} \varphi \,\dd \omega^\eps_t 
    &= - \int_{\torus} \eps \nabla \varphi \cdot (\nabla \uu_\eps)^\ast \partial_t \uu_\eps -  \int_{\torus} \eps \varphi |\partial_t \uu_\eps|^2 \,\dd x
    \\
    &= 
    \int_{\torus} \eps \frac{ |\nabla \uu_\eps\nabla \varphi|^2}{4\varphi} \,\dd x
    - \int_{\torus} \eps \varphi \Big| \partial_t \uu_\eps +\frac{\nabla \uu_\eps \nabla \varphi}{2\varphi} \Big|^2 \, \dd x
    \\
    &\leq
     \int_{\torus}  \frac{ |\nabla \varphi|^2}{2\varphi} \,\dd \omega^\eps_t.
  \end{align*}
  So the claim follows from the pointwise inequality~\eqref{eq:Dphi/phi vs D2phi} and the energy-dissipation estimate~\eqref{eq:EDI}.
\end{proof}

This monotonicity together with a standard separability argument implies the following pre-compactness of the energy measures.
\begin{proposition}\label{prop:compactness of energy measures}
  There exists a family of Radon measures $\{\omega_t\}_{t\geq0}$ and a subsequence such that $\omega^{\eps_j}_t \stackrel{\ast}{\rightharpoonup} \omega_t$ for all $t\geq0$. 
\end{proposition}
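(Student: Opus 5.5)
The plan is the classical Ilmanen/Brakke compactness argument, built on the semi-decreasing property just proved and on the uniform mass bound coming from the energy-dissipation identity~\eqref{eq:EDI}.

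\emph{Step 1: uniform mass bound.} For every $t\geq0$ and $\eps\in(0,1]$ we have $\omega^\eps_t(\torus)\leq 2E_\eps(\uu_\eps(\cdot,t))\leq 2E_0$ by~\eqref{eq:EDI} and Definition~\ref{def:well-prepared}\eqref{item:Eeps0}. Since $\torus$ is compact, the family $\{\omega^\eps_t\}$ is weak-$\ast$ precompact for each fixed $t$.

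\emph{Step 2: convergence on a countable dense set of times and test functions.} Fix a countable set $\{\varphi_k\}\subset C^2(\torus;[0,\infty))$ whose span is dense in $C(\torus)$; for instance, take nonnegative trigonometric polynomials together with the constant $1$. Here we use that $C^2$ is dense in $C$ and that every function can be written as a difference of nonnegative functions after adding a constant. Let $Q=\mathbb{Q}\cap[0,\infty)$. A diagonal argument gives a subsequence $\eps_j\to0$ such that $\omega^{\eps_j}_t\stackrel{\ast}{\rightharpoonup}\omega_t$ for every $t\in Q$.

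\emph{Step 3: extension to all times.} For each $k$, the semi-decreasing property says that
\[
  g^j_k(t):=\int\varphi_k\,\dd\omega^{\eps_j}_t - E_0\|\varphi_k\|_{C^2}t
\]
is non-increasing in $t$ and uniformly bounded on compact intervals. By Helly's selection theorem, after a further diagonal extraction, $g^j_k\to g_k$ pointwise on $[0,\infty)$ for all $k$, and each $g_k$ is non-increasing. The limit on rational times already identifies $g_k$ there.

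\emph{Step 4: defining $\omega_t$ for arbitrary $t$.} For arbitrary $t$, the values $\lim_j\int\varphi_k\,\dd\omega^{\eps_j}_t$ exist for all $k$. Together with the mass bound from Step 1, this forces all weak-$\ast$ subsequential limits of $\omega^{\eps_j}_t$ to agree on the dense family $\{\varphi_k\}$, hence to coincide. Call the common limit $\omega_t$; the full subsequence then converges to it.

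The only point needing care is Step 4, i.e.\ that convergence tested against the countable family $\{\varphi_k\}$ plus the bounded mass implies weak-$\ast$ convergence. This follows by approximating any $\psi\in C(\torus)$ uniformly by finite combinations of the $\varphi_k$ and using the bound $\omega^\eps_t(\torus)\leq 2E_0$. The extraction of countably many Helly subsequences is handled by a standard diagonal argument, so I expect no real obstacle.
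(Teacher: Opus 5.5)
Your proposal is correct and is essentially the argument the paper uses by citing Ilmanen's Section~5.4 (and Mizuno--Tonegawa): a uniform mass bound, convergence on a countable dense set of times, and the semi-decreasing property to pass to all $t\geq0$. Helly's selection theorem just packages Ilmanen's step of obtaining convergence at the continuity points of the monotone limits and then extracting once more on the countably many discontinuity times, which also makes your Step~2 redundant.
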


\begin{proof}
  This follows as in~\cite[Section~5.4]{Ilmanen_allencahn}, see also the proof of~\cite[Proposition~5.2]{MizunoTonegawa}.
\end{proof}

Next, we show the weak-$\ast$ pre-compactness in $BV$ of the functions themselves.
\begin{proposition}\label{prop:compactness of u}
There exist $u _i \in BV _{loc} (\mathbb{T}^d \times [0,\infty)) \cap C^{\frac12} _{loc} ([0,\infty); L^1 (\mathbb{T}^d))$ with $\sum_{i=1}^n u_i = 1$ a.e., and a subsequence $\varepsilon _j \to 0$ such that $u_{\varepsilon_j,i} \to u_i$ in $L^1 _{loc} (\mathbb{T}^d \times [0,\infty))$ and a.e.\ pointwise. 
Moreover, $\sum_{i=1} ^n | \nabla u_i (\cdot,t)| (\varphi ) \leq \frac{1}{\sigma} \omega _t (\varphi)$ 
for all $t \in [0,\infty)$ and $\varphi \in C(\mathbb{T}^d;[0,\infty))$.
\end{proposition}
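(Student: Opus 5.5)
The plan is the Modica--Mortola trick, applied componentwise via the transformed variables $w_{\eps,i}:=\phi(u_{\eps,i})$. By the maximum principle (as in the proof of Theorem~\ref{thm:positive part of disrepancy measure}) we have $0\le u_{\eps,i}\le 1$, so $0\le w_{\eps,i}\le\sigma$. Young's inequality gives
\[
 |\nabla w_{\eps,i}| = \sqrt{2W(u_{\eps,i})}\,|\nabla u_{\eps,i}| \le \frac\eps2|\nabla u_{\eps,i}|^2+\frac1\eps W(u_{\eps,i}).
\]
Summing over $i$ then yields
\[
 \sum_{i=1}^n|\nabla w_{\eps,i}|\,\dd x\le \omega^\eps_t \quad\text{as measures, for each } t,
\]
together with the uniform bound $\omega^\eps_t(\torus)\le E_\eps(\uu_\eps(\cdot,t))\le E_0$ from \eqref{eq:EDI}. (The normalization of $\omega^\eps_t$ in Lemma~\ref{lemma:epsBrakke} is the one with $\frac\eps2$, which is the one used here.)

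\textbf{Time regularity.} We estimate
\[
 \int_{\torus}|\partial_t w_{\eps,i}|\,\dd x\le \int_{\torus}\sqrt{2W(u_{\eps,i})}\,|\partial_t u_{\eps,i}|\,\dd x \le \Big(\int_{\torus} \frac{2}{\eps}W\Big)^{1/2}\Big(\int_{\torus}\eps|\partial_t u_{\eps,i}|^2\Big)^{1/2}.
\]
Combining this with \eqref{eq:EDI}, namely $\int_0^T\!\int\eps|\partial_t\uu_\eps|^2\le E_0$, gives
\[
 \|w_{\eps,i}(t_2)-w_{\eps,i}(t_1)\|_{L^1}\le C E_0|t_2-t_1|^{1/2}.
\]
Thus $w_{\eps,i}$ is bounded in $BV(\torus\times(0,T))$ and uniformly $C^{1/2}([0,T];L^1)$. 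By the compact embedding $BV\hookrightarrow L^1$ on $\torus\times(0,T)$ and a diagonal argument in $T$, a subsequence satisfies $w_{\eps_j,i}\to w_i$ in $L^1_{loc}$ and a.e. The Arzel\`a--Ascoli-type argument in $C([0,T];L^1)$ (pointwise-in-$t$ precompactness from the spatial $BV$ bound, plus equicontinuity) upgrades this to convergence for every $t$, and shows that $w_i\in C^{1/2}_{loc}([0,\infty);L^1)$.

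\textbf{Identifying the limit.} Since $\int_0^T\!\int \frac1\eps W(u_{\eps,i})\le TE_0$, we get $W(u_{\eps,i})\to0$ in $L^1$. Hence a.e.\ limits of $u_{\eps,i}$ lie in $\{0,1\}$. Because $\phi$ is a continuous strictly increasing bijection $[0,1]\to[0,\sigma]$, $u_{\eps,i}=\phi^{-1}(w_{\eps,i})\to u_i:=\phi^{-1}(w_i)$ a.e.\ and in $L^1$ (by dominated convergence, as everything is bounded), with $u_i\in\{0,1\}$ and $w_i=\sigma u_i$. So $u_i$ inherits the $BV$ and $C^{1/2}$ properties. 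For the constraint, the penalty term gives
\[
 \int\Big(\sigma-\sum_j\phi(u_{\eps,j})\Big)^2\le 2\eps^\alpha E_0\to0,
\]
so $\sum_j w_j=\sigma$, i.e.\ $\sum_j u_j=1$ a.e.

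\textbf{The measure bound.} For fixed $t$ and $\varphi\ge0$ continuous, $w_{\eps_j,i}(\cdot,t)\to\sigma u_i(\cdot,t)$ in $L^1(\torus)$. Lower semicontinuity of the weighted total variation, $v\mapsto\int\varphi\,\dd|\nabla v|$, under $L^1$ convergence gives
\[
 \sigma\sum_i|\nabla u_i(\cdot,t)|(\varphi)\le\sum_i\liminf_j\int\varphi|\nabla w_{\eps_j,i}|\le\liminf_j\omega^{\eps_j}_t(\varphi)=\omega_t(\varphi).
\]
Here the last equality uses Proposition~\ref{prop:compactness of energy measures} along a further common subsequence. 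Lower semicontinuity of the sum follows by applying it to each term, and the liminf of a sum dominates the sum of the liminfs.

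\textbf{Main obstacle.} The step needing most care is obtaining the convergence for every $t$ (not just a.e.) along a single subsequence that serves both this proposition and Proposition~\ref{prop:compactness of energy measures}. I would handle it with the uniform $C^{1/2}$-in-time $L^1$ estimate above, which makes a.e.-in-$t$ convergence automatically hold for all $t$.
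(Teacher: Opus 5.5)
Your proposal is correct and follows the paper's proof essentially step by step: the Modica--Mortola bound for $\phi(u_{\eps,i})$, $BV$ compactness in space-time, the $C^{1/2}$-in-time $L^1$ estimate from the energy-dissipation identity, the $\frac1\eps W(u_{\eps,i})$ bound to force $u_i\in\{0,1\}$, lower semicontinuity for the measure bound, and the penalty term for $\sum_i u_i=1$. The differences are only in details. You obtain the H\"older estimate by Cauchy--Schwarz instead of the paper's weighted Young inequality, and you use Arzel\`a--Ascoli in $C([0,T];L^1(\torus))$ to get convergence at every time $t$. This justifies the ``for all $t$'' measure inequality more cleanly than the paper, which only has a.e.-in-time convergence and redefines $w_i$ on a null set of times.
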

\begin{proof}
We define
\[
\Phi (s) = \frac1\sigma \phi (s) = \frac{\int_0 ^s \sqrt{2W (a)} \, \dd a}{\int_0 ^1 \sqrt{2W (a)} \, \dd a},
\quad 
\text{and}
\quad
w_{\varepsilon_j,i} = \Phi \circ u_{\varepsilon_j, i}.
\]
We compute
\begin{equation}
|\nabla w_{\varepsilon_j,i} |
=
\frac1\sigma |\nabla u_{\varepsilon_j,i}| \sqrt{2W(u_{\varepsilon_j,i})}
\leq 
\frac1\sigma
\left(
\frac{\varepsilon _j |\nabla u_{\varepsilon_j,i} |^2}{2} 
+\frac{W(u_{\varepsilon_j,i})}{\varepsilon_j} 
\right)
\label{eq5.9}
\end{equation}
and
\[
|\partial _t w_{\varepsilon_j,i}|
\leq 
\frac1\sigma
\left(
\frac{\varepsilon_j  |\partial_t u_{\varepsilon_j,i} |^2}{2} 
+\frac{W(u_{\varepsilon_j,i})}{\varepsilon_j} 
\right).
\]
Hence there exists $C>0$ such that for any $i=1,\dots,n$ we have
\[
\sup_{j\geq 1} \left\{ \max _{0\leq t \leq T} \int _{\mathbb{T}^d } |\nabla w_{\varepsilon_j,i} (x,t) | \, \dd x
+
\int _0 ^T \int _{\mathbb{T}^d} |\partial_t w_{\varepsilon_j,i} | \, \dd x \, \dd t \right\} \leq C.
\]
Thus $\{ w_{\varepsilon_j,i} \} _{j=1} ^\infty$
is bounded in $BV_{loc} (\mathbb{T}^d \times [0,T])$.
By $BV$ compactness, there exist a subsequence (denoted by the same index) and
$w_i \in BV_{loc} (\mathbb{T}^d \times [0,\infty))$
such that
\[
w_{\varepsilon_j,i} \to w_i 
\qquad
\text{strongly in } L^1 _{loc} (\mathbb{T}^d \times [0,\infty))
\] 
and a.e.\ pointwise. Since
\[
\sup _{j\geq 1, \, t\geq 0} \int _{\mathbb{T}^d} \frac{W (u_{\varepsilon_j,i} (x,t))}{\varepsilon _j} \, \dd x <\infty,
\]
we have $u_i(x,t) := \lim_{j\to \infty} u_{\varepsilon_j,i} (x,t) \in \{0,1\}$ for a.e.\ $(x,t)$, and hence
$w_i \in \{0,1\}$ for a.e.\ $(x,t)$. One can easily check that
$u_i =w_i$ a.e.\ on $\mathbb{T}^d \times [0,\infty)$ and thus $u_i \in BV_{loc} (\mathbb{T}^d \times [0,\infty))$.
Moreover, there exists $C >0$ such that for any $0\leq t_1 <t_2 <T$,
\begin{equation}
\begin{split} 
 \int _{\mathbb{T}^d} |w_{\varepsilon_j,i} (x,t_2) -w_{\varepsilon_j,i} (x,t_1)| \, \dd x 
&\leq  
\int _{\mathbb{T}^d} \int _{t_1} ^{t_2} |\partial _t w_{\varepsilon_j,i}| \, \dd t \, \dd x \\
&\leq  \frac1\sigma
\int _{\mathbb{T}^d} \int _{t_1} ^{t_2} 
\left(
\frac{\varepsilon_j  | \partial_t u_{\varepsilon_j,i} |^2}{2} \sqrt{t_2 -t} 
+\frac{W(u_{\varepsilon_j,i})}{\varepsilon_j \sqrt{t_2 -t} } 
\right)
  \dd t \, \dd x \\
& \leq  C \sqrt{t_2 -t_1} .
\end{split} 
\label{eq5.7}
\end{equation} 
By \eqref{eq5.7} and for a.e.\ $0\leq t_1 <t_2 <T$,
\[
\lim _{j \to \infty} \int _{\mathbb{T}^d } | w_{\varepsilon_j,i} (x,t_2) - w_{\varepsilon_j,i} (x,t_1)| \, \dd x
= \int _{\mathbb{T}^d} |w_i (x,t_2) -w_i (x,t_1)| \, \dd x,
\]
hence we can redefine $w_i$ on a set of measure zero so that $w_i \in C^{\frac12} _{loc} ([0,\infty) ;L^1 (\torus))$.
Moreover, \eqref{eq5.9} and $|\nabla u_i (\cdot,t)| =|\nabla w_i (\cdot,t)|$ imply 
$\sum_{i=1} ^n \int \varphi | \nabla u_i (\cdot,t)|  \leq \frac{1}{\sigma} \omega _t (\varphi )$ 
for all $t \in [0,\infty)$ and $\varphi \in C(\mathbb{T}^d;[0,\infty))$.

Finally, for any $t\geq0$ we have
\begin{align*}
  \int_{\torus} \frac{\sigma^2}{2\eps^\alpha} \Big(1 - \sum_{i=1}^n w_{\eps,i}\Big)^2 \,\dd x
  = \int_{\torus} \frac{1}{2\eps^\alpha} \Big(\sigma - \sum_{i=1}^n \phi(u_{\eps,i})\Big)^2 \,\dd x 
  \leq E_\eps(\uu_\eps) \leq E_0 <\infty.
\end{align*}
Hence, $\sum_{i=1}^n w_i(x,t) = 1$ for a.e.\ $x\in \torus$ and $t\geq0$. Since $w_i = u_i$ a.e., this implies that $\sum_{i=1}^n u_i = 1$ a.e.\ on $\torus \times [0,\infty)$.
\end{proof}

\section{Proof of Theorem~\ref{thm:discrepancy measure full}: Vanishing of the discrepancy}
\label{sec:vanishing_discrepancy}
In this section, we prove that the discrepancy measure vanishes in the limit $\varepsilon \to 0$.
We adapt the argument in \cite{TakasaoTonegawa_transport} to our system of Allen--Cahn equations.

Recall that due to our definition of well-prepared initial data, the following estimates are fulfilled:
\begin{equation}\label{gradient-est-initial}
0 \leq u_{i,0} \leq 1 \quad \text{on} \ \mathbb{T}^d, \quad
\sup_{\mathbb{T}^d } \varepsilon |\nabla \uu_0| \leq C
\qquad \text{for any} \ \varepsilon>0 \ \text{and} \ i =1,\dots, n,
\end{equation}
and
\begin{equation}\label{discrepancy0}
\frac{\varepsilon}2 |\nabla u_{i,0}|^2 -\frac1\eps W (u_{i,0}) \leq C \varepsilon ^{-\frac{1+\alpha}{2}} \qquad \text{on} \ \mathbb{T}^d .
\end{equation}
Then, using standard arguments, we can propagate these estimates to all positive times.
\begin{lemma}
There exists $C>0$ such that
\begin{equation}\label{gradient-est}
0 \leq u_{i} \leq 1  \quad \text{in} \ \mathbb{T}^d \times [0,\infty), \quad
\sup_{\mathbb{T}^d \times [0,\infty) } \varepsilon |\nabla \uu| \leq C
\qquad \text{for any} \ \varepsilon>0 \ \text{and} \ i =1,\dots, n.
\end{equation}
\end{lemma}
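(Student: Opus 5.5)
The bounds $0\le u_i\le 1$ follow from the comparison principle applied componentwise, as already used in the proof of Theorem~\ref{thm:positive part of disrepancy measure}. For fixed $\eps$ and a fixed solution $\uu$, write the $i$-th equation as a scalar semilinear equation
\[
\partial_t u_i - \Delta u_i = g(x,t,u_i), \qquad g(x,t,s) := -\frac{1}{\eps^2} W'(s) + \frac{1}{\eps^{1+\alpha}}\sqrt{2W(s)}\,\Big(\sigma - \sum_{j=1}^n \phi(u_j(x,t))\Big),
\]
where the other components $u_j$ are treated as given bounded coefficients. Since $W'(0)=W'(1)=0$ and $\sqrt{2W(0)}=\sqrt{2W(1)}=0$, the constants $0$ and $1$ are solutions of this scalar equation. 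The map $s\mapsto \sqrt{2W(s)} = |s(1-s)|$ is Lipschitz, and $g$ is locally Lipschitz in $s$ uniformly in $(x,t)$ on compact time intervals (smooth solution). So the scalar comparison principle on $\torus$ gives $0\le u_i\le 1$ for all times, using $0\le u_{0,i}\le 1$ from~\eqref{gradient-est-initial}. To avoid any circularity, one can run this on the maximal interval where all components stay in a neighborhood of $[0,1]$, or use the truncated system in which $u_j$ is replaced by its projection onto $[0,1]$ and then invoke uniqueness.

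For the gradient bound, rescale parabolically: set $\tilde u_i(y,\tau) := u_i(\eps y, \eps^2 \tau)$. Then
\[
\partial_\tau \tilde u_i - \Delta_y \tilde u_i = -W'(\tilde u_i) + \eps^{1-\alpha}\sqrt{2W(\tilde u_i)}\,\Big(\sigma - \sum_j \phi(\tilde u_j)\Big) =: \tilde g_i .
\]
By the first step $|\tilde g_i|\le C$ uniformly in $\eps\in(0,1]$, because $\alpha<1$ and $0\le \tilde u_j\le 1$. The claim $\eps|\nabla \uu|\le C$ is equivalent to $|\nabla_y \tilde \uu|\le C$ uniformly, on the rescaled torus of side $1/\eps$, with all constants independent of the domain size.

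For $\tau\ge 1$, interior parabolic estimates on unit cylinders $B_2(y_0)\times(\tau_0-1,\tau_0]$ apply: $L^p$ estimates with bounded right-hand side give $W^{2,1}_p$ bounds, and hence H\"older bounds on $\nabla\tilde u_i$ for $p>d+2$. These bound $|\nabla_y\tilde u_i|$ using only $\|\tilde u_i\|_\infty\le 1$ and $\|\tilde g_i\|_\infty\le C$. Equivalently, one can use the Duhamel formula on $[\tau_0-1,\tau_0]$ with the estimate $\|\nabla e^{s\Delta}f\|_\infty\le Cs^{-1/2}\|f\|_\infty$. For $\tau\in[0,1]$, use the Duhamel representation from time $0$ instead:
\[
\nabla\tilde u_i(\tau) = e^{\tau\Delta}\nabla\tilde u_{i,0} + \int_0^\tau \nabla e^{(\tau-s)\Delta}\tilde g_i(s)\,\dd s.
\]
The first term is bounded by $\|\nabla_y\tilde u_{i,0}\|_\infty = \eps\|\nabla u_{i,0}\|_\infty\le C$ from~\eqref{gradient-est-initial}, since the heat semigroup is an $L^\infty$ contraction. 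The second term is bounded by $C\int_0^1 s^{-1/2}\,\dd s$. Combining the two time ranges gives~\eqref{gradient-est}.

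The main point needing care is uniformity. The constants must depend neither on $\eps$ nor on the size $1/\eps$ of the rescaled torus. This holds because every estimate above is local, via unit-scale cylinders or heat-kernel bounds that are uniform on the periodic domain. The subsidiary point is that the coupling term is bounded only thanks to the maximum principle step and $\alpha<1$, since the factor in front of it is $\eps^{1-\alpha}\le 1$. No coupled or vector-valued maximum principle is needed, because each component is handled by a scalar comparison with the others frozen.
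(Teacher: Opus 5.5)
Your proposal is correct. For $0\le u_i\le 1$ it matches the paper: $0$ and $1$ are solutions, and comparison is applied to each component. For $t\ge\eps^2$ (i.e.\ $\tau\ge1$ after rescaling) you also match the paper, which invokes interior parabolic $L^p$ estimates on unit cylinders of the rescaled problem.

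The genuine difference is the short-time regime $t\in[0,\eps^2]$. There the paper differentiates the system and runs a Bernstein-type maximum principle argument. It derives
\[
\tfrac12\partial_t|\nabla\uu|^2=\tfrac12\Delta|\nabla\uu|^2-|\nabla^2\uu|^2-\tfrac1\eps\textstyle\sum_{i,j,k}\partial_k u_i\,\partial_k u_j\,\partial_{u_iu_j}F_\eps(\uu)
\]
and uses a Hessian bound $|\partial^2_{\uu}F_\eps|\le K/\eps$, which holds on $[0,1]^n$ because $\alpha<1$ and $\eps\le1$. This makes $e^{-Kt/\eps^2}|\nabla\uu|^2$ (up to a factor in the exponent) a subsolution of the heat equation, so $\eps|\nabla\uu|$ stays controlled by its initial value up to time $\eps^2$.

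You instead use the Duhamel formula and the smoothing bound $\|\nabla e^{s\Delta}f\|_\infty\le Cs^{-1/2}\|f\|_\infty$. This needs only $\|\tilde g_i\|_\infty\le C$, never derivatives of the nonlinearity in $\uu$. In particular you need no second-order information on $F_\eps$, which is delicate because $\sqrt{2W}$ is not differentiable at $0$ and $1$.

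Your route has two further advantages. The same semigroup estimate on $[\max\{0,\tau-1\},\tau]$ handles all times at once. It also makes uniformity in the size $1/\eps$ of the rescaled torus transparent, since periodic functions evolve under the whole-space heat kernel. By contrast, the paper defers the $t\ge\eps^2$ case to a cited lemma that it says must be adapted to an interior estimate. The paper's Bernstein argument is purely pointwise and gives the explicit short-time constant without semigroup theory. Your route is more self-contained and uniform.
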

\begin{proof}
Since $W'(0)=W'(1)=\sqrt{2W(0)}=\sqrt{2W(1)}=0$, $\underline{u} (x,t):=0$ and $\overline{u} (x,t):=1$ are sub- and the super-solutions to \eqref{eq:allen cahn}, respectively.
Hence, $0 \leq u_{i} \leq 1$ by the maximum principle and \eqref{gradient-est-initial}.

Next, we show the gradient estimate of $\uu$. By a standard calculation, we obtain
\[
\frac{1}{2} \partial _t |\nabla \uu|^2 
=
\frac12 \Delta |\nabla \uu|^2 - |\nabla^2 \uu|^2 
- \frac1\eps \sum_{i,j =1}^n \sum_{k=1}^d \partial _{x_k} u_i \partial _{x_k} u_{j} \partial_{u_i, u_j} F_\varepsilon (\uu), \qquad (x,t) \in \mathbb{T}^d \times (0,\infty).
\]
By the definition of $F_\varepsilon$, there exists $K>0$ such that $|\partial_{\uu}^2 F_\varepsilon (\uu)| \leq K/\varepsilon$ for any $\uu \in \mathbb{R}^n$.
Set $w(x,t):= e^{- \frac{Kt}{\varepsilon^2}} |\nabla \uu(x,t)|^2$. Then we have
\[
\partial _t w \leq \Delta w, \qquad (x,t) \in \mathbb{T}^d \times (0,\infty)
\]
and thus the maximum principle implies that 
\[
|\nabla \uu (x,t)| \leq e^{\frac{K}{2}} \max_{x \in \mathbb{T}^d} |\nabla \uu (x,0)|
\leq \frac{C}{\eps} e^{\frac{K}{2}} ,
\qquad (x,t) \in \mathbb{T}^d \times [0,\varepsilon^2],
\]
where we used~\eqref{gradient-est}. 
For $t \geq \varepsilon^2$, the gradient estimate can be obtained by a standard interior gradient estimates via parabolic $L^p$ regularity and Sobolev embeddings using a rescaling argument. See, for example, \cite[Lemma~4.1]{TakasaoTonegawa_transport} (note, however, that since this lemma involves estimates down to the initial time, it needs to be adapted as an interior estimate).
\end{proof}

In the next lemma, we prove uniform upper bounds on the discrepancy measure, based on an idea of Hutchinson and Tonegawa~\cite{HutchinsonTonegawa} in the case stationary points of the scalar Allen--Cahn equation.
\begin{lemma}
There exist $\eps _1 >0$ and $C>0$ such that for any $\varepsilon \in (0,\eps_1)$ and for any $i \in \{1,\dots,n\}$ we have
\begin{equation}\label{discrepancy1}
\frac{\varepsilon}2 |\nabla u_i|^2 -\frac1\eps W (u_i) \leq C \varepsilon ^{-\frac{1+\alpha}{2}} \qquad \text{on} \ \mathbb{T}^d \times [0,\infty).
\end{equation}
\end{lemma}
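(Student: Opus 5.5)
We follow the maximum-principle argument of Hutchinson--Tonegawa and Takasao--Tonegawa, applied to each component $u=u_i$ separately. Each component solves a scalar Allen--Cahn equation with a forcing term:
\[
\eps\partial_t u = \eps\Delta u - \tfrac1\eps W'(u) + \eps^{-\alpha} g, \qquad g:=\sqrt{2W(u)}\Big(\sigma-\sum_j\phi(u_j)\Big).
\]
The forcing satisfies $|g|\le C$, because $0\le u_j\le1$ by~\eqref{gradient-est}. Its gradient satisfies $|\nabla g|\le C|\nabla \uu|\le C/\eps$, by the gradient bound~\eqref{gradient-est}.

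We work in rescaled variables $\tilde x=x/\eps$, $\tilde t=t/\eps^2$. The equation becomes $\partial_t u=\Delta u-W'(u)+\eps^{1-\alpha}g$, where $|\tilde\nabla u|\le C$ and $|\tilde\nabla g|\le C$. In these variables the discrepancy to control is $\xi:=\frac12|\nabla u|^2-W(u)$, and the target bound~\eqref{discrepancy1} reads $\xi\le C\eps^{\frac{1-\alpha}{2}}$. So the plan is to show that $\xi$ is bounded by the square root of the forcing size $\delta:=\eps^{1-\alpha}$.

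Following Hutchinson--Tonegawa, we do not work with $\xi$ directly but with the modified quantity
\[
\tilde\xi:=\tfrac12|\nabla u|^2-W(u)-G(u),
\]
where $G\ge0$ is a small correction with $\sup G\lesssim \delta^{1/2}$. The standard computation gives, where $\nabla u\neq0$,
\[
(\partial_t-\Delta)\xi \le -\frac{|\nabla\xi|^2}{|\nabla u|^2}\cdot(\text{stuff}) + \delta\,\nabla g\cdot\nabla u - \delta\,W'(u)g + \text{terms from } \nabla\xi\cdot\nabla W'.
\]
More precisely, we use the identity $|\nabla^2u|^2|\nabla u|^2\ge|\nabla^2 u\nabla u|^2$ together with $\nabla\xi=\nabla^2u\nabla u-W'\nabla u$. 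This yields
\[
(\partial_t-\Delta)\xi\le -\frac{|\nabla\xi|^2}{|\nabla u|^2}+\frac{2W'\nabla u\cdot\nabla\xi}{|\nabla u|^2}\cdots+O(\delta)(|\nabla u|+1).
\]
The forcing errors are therefore $O(\delta)$. A favourable term arises from the $W'$ cross terms, in the form $-\frac{2\xi}{|\nabla u|^2}\cdot(\text{positive quantity})$. This favourable term is what Hutchinson--Tonegawa exploit by choosing $G(u)=\delta^{1/2}\big(1-\frac18(u-\frac12)^2\big)$. With this choice, at a positive interior maximum of $\tilde\xi$ exceeding $C\delta^{1/2}$, the good term is of order $\tilde\xi\,G''\cdot(\ldots)\gtrsim\delta^{1/2}\cdot\delta^{1/2}=\delta$. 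This beats the $O(\delta)$ forcing errors, provided the constant is large; here we use that $|\nabla u|$ is bounded above. The borderline case where $|\nabla u|$ is small is harmless, since there $\xi\le\frac12|\nabla u|^2$ is already small.

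The initial condition~\eqref{discrepancy0} gives $\xi\le C\delta^{1/2}$ at time $0$. Periodicity removes any boundary issue. The parabolic maximum principle then yields $\sup\tilde\xi\le C\delta^{1/2}$ for all times, hence $\xi\le C\delta^{1/2}+\sup G\le C\eps^{\frac{1-\alpha}2}$. Undoing the scaling, this is~\eqref{discrepancy1}. The smallness $\eps<\eps_1$ is needed so that $\delta$ is small enough for the absorption step to work.

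The main obstacle is the differential inequality at the maximum point: absorbing the forcing terms $\delta\nabla g\cdot\nabla u$ and $\delta W'g$ by the gain coming from $G$. This must be done uniformly, including near $u\in\{0,1\}$, where $W'$ degenerates. Our first approach will be to copy the explicit choice of $G$ and the case analysis from the proof of the corresponding lemma in~\cite{TakasaoTonegawa_transport}. There the transport term plays exactly the role of our $\delta g$, with $|\tilde\nabla g|\le C$ supplied by~\eqref{gradient-est}.
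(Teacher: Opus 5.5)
Your proposal is essentially the paper's proof. It uses the same parabolic rescaling, the same modified discrepancy with exactly $G(u)=\eps^{\frac{1-\alpha}{2}}\{1-\frac18(u-\frac12)^2\}$, the Cauchy--Schwarz bound $|\nabla^2u\,\nabla u|^2\le|\nabla^2u|^2|\nabla u|^2$, forcing errors of size $C\eps^{1-\alpha}$ from the gradient bound, and a contradiction at a positive-time maximum via the initial bound.

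One correction removes the ``main obstacle'' you anticipate. At the maximum point the inequality reduces to $0\le C\eps^{1-\alpha}+G''|\nabla u_i|^2-W'(u_i)G'(u_i)-(G'(u_i))^2$. The entire gain is $G''|\nabla u_i|^2\le-\frac14L\eps^{1-\alpha}$, which follows from $|\nabla u_i|^2\ge$ your $\tilde\xi\ge L\eps^{\frac{1-\alpha}{2}}$ there. It does not come from the $W'$ cross terms: those are merely nonpositive, because $W'$ and $G'$ have the same sign on $[0,1]$. Hence nothing degenerates near $u_i\in\{0,1\}$ and no case analysis is needed.
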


\begin{proof}
By the rescaling $\tilde u_i (x,t)= u_i (\varepsilon x, \varepsilon ^2 t)$, we have
\begin{equation}\label{rescaledeq}
\partial_t \tilde u_i 
  = \Delta \tilde u_i - W'( \tilde u_i) + \eps^{1-\alpha} \sqrt{2W(\tilde u_i)} g(\tilde \uu),
\end{equation}
where $g(\tilde \uu):=\Big(\sigma - \sum_{j=1}^n \phi(\tilde u_j)\Big)$.
Hereafter, the tilde on the function $\tilde u_i$ is omitted.

Set 
\[
\zeta_i := \frac12 |\nabla u_i|^2 -W(u_i) -G(u_i),
\]
where we will choose $G$ later. 
By a direct calculation and~\eqref{rescaledeq}, we have
\begin{align*}
\partial _t \zeta_i -\Delta \zeta_i 
&=  
\varepsilon ^{1-\alpha} \{ \nabla u_i \cdot \nabla (g(\uu) \sqrt{2W (u_i)}) - (W' (u_i) + G'(u_i)) g(\uu) \sqrt{2W (u_i)}\} \\
& ~~~+ (W'(u_i) + G'(u_i)) W'(u_i) - | \nabla^2 u_i |^2 + G'' (u_i) |\nabla u_i| ^2.
\end{align*}
By \eqref{gradient-est}, for the rescaled function we have $|\uu|\leq C$ and $|\nabla \uu|\leq C$. Therefore
\begin{equation*}
\begin{split}
\big|\nabla u_i \cdot \nabla (g(\uu) \sqrt{2W (u_i)}) - (W' (u_i) + G'(u_i)) g(\uu) \sqrt{2W (u_i)}\big| \leq C 
\end{split}
\end{equation*}
for some constant~$C>0$, where we used~$\frac{|W' (s)|}{\sqrt{2W(s)}} \leq C|s-\frac12|$.
In addition, by the Cauchy--Schwarz inequality, we have
\begin{equation*}
\begin{split}
& |\nabla \zeta_i |^2 + 2(W'(u_i) + G'(u_i)) \nabla \zeta_i \cdot \nabla u_i 
+ (W'(u_i) + G'(u_i))^2 |\nabla u_i|^2 \\
&~~~~=  | \nabla \zeta_i + (W' (u_i) + G' (u_i)) \nabla u_i |^2 \\
&~~~~=  | \nabla^2 u_i \nabla u_i|^2 
\leq |\nabla^2 u_i|^2|\nabla u_i|^2 .
 \end{split}
\end{equation*}
Hence, on~$(x,t)$ with~$|\nabla u_i (x,t)|\not=0$, we have
\begin{equation}\label{discrepancy2}
\begin{split}
&\partial _t \zeta_i -\Delta \zeta_i \\
\leq & \, 
\varepsilon ^{1-\alpha} \{ \nabla u_i \cdot \nabla (g(\uu) \sqrt{2W (u_i)}) - (W' (u_i) + G'(u_i)) g(\uu) \sqrt{2W (u_i)}\} \\
& + G'' (u_i) |\nabla u_i| ^2 
-\frac{2(W'(u_i) +G'(u_i)) \nabla \zeta \cdot \nabla u_i }{|\nabla u_i|^2}
-W'(u_i) G'(u_i) - (G'(u_i))^2 \\
\leq & \, 
\varepsilon ^{1-\alpha} C + G'' (u_i) |\nabla u_i|^2 
+\frac{2 |W'(u_i) +G'(u_i)| |\nabla \zeta|  }{|\nabla u_i|}
-W'(u_i) G'(u_i) - (G'(u_i))^2.
\end{split}
\end{equation}
Set 
\[
G(u_i) := \varepsilon ^{\frac{1-\alpha}{2}} \left\{ 1 -\frac18 \left(u_i -\frac12\right)^2\right\}.
\]
To get a contradiction, we assume there exists~$i\in \{1,\dots,n\}$ such that
\[
\sup_{ \varepsilon^{-1} \mathbb{T}^d \times [0,\infty)} \zeta_i \geq 2L \varepsilon ^{\frac{1-\alpha}{2}},
\]
where~$L>0$ will be chosen later.
We may assume that there exists~$\tilde T>0$ such that
\[
\max_{ \varepsilon^{-1} \mathbb{T}^d \times [0,\tilde T]} \zeta_i \geq L \varepsilon ^{\frac{1-\alpha}{2}}.
\]
Consider a maximum point~$(x_0,t_0)$ of~$\zeta_i $ on~$\varepsilon^{-1} \mathbb{T}^d \times [0,\tilde T]$. 
By \eqref{discrepancy0}, 
\[
\max_{ \varepsilon^{-1} \mathbb{T}^d } \zeta_i (\cdot,0) \leq C \varepsilon ^{\frac{1-\alpha}{2}}.
\]
Thus, by taking a sufficiently large constant~$L$, we may assume~$t_0 >0$. 
Hence, at~$(x_0,t_0)$, we have
\[
  \partial_t  \zeta_i \geq 0, \quad
  \nabla \zeta_i =0, \quad
  \text{and} \quad
  \Delta \zeta_i \leq 0.
\]
By this and \eqref{discrepancy2}, at $(x_0,t_0)$, we have
\begin{equation}\label{discrepancy3}
  \begin{split}
    0 \leq & \, 
    \varepsilon ^{1-\alpha} C + G'' (u_i) |\nabla u_i|^2 
    -W'(u_i) G'(u_i) - (G'(u_i))^2\\
    = & \, 
    \varepsilon ^{1-\alpha} C - \frac{1}{4} \varepsilon^{\frac{1-\alpha}{2}} |\nabla u_i|^2 
    -W'(u_i) G'(u_i) - (G'(u_i))^2 \\
    \leq & \, 
    \varepsilon ^{1-\alpha} C - \frac{1}{4} L \varepsilon^{1-\alpha} 
    -W'(u_i) G'(u_i) - (G'(u_i))^2,
  \end{split}
\end{equation}
where we used~$G''=-\frac14 \varepsilon ^{\frac{1-\alpha}{2}}$ and~$|\nabla u_i|^2 \geq \zeta_i \geq L\varepsilon ^{\frac{1-\alpha}{2}}$ at $(x_0,t_0)$. 
Since~$0\leq u_i\leq 1$, we have~$-W' G' \leq 0$. 
Then we get a contradiction for sufficient large~$L>0$ and sufficient small~$\varepsilon$. 
Hence, we obtain 
\[
  \left( \frac{|\nabla u_i|^2}{2} -W(u_i) \right) =\zeta_i +G(u_i) \leq (2L+1) \varepsilon ^{\frac{1-\alpha}{2}}. 
\]
Transforming this inequality back to the original scale yields the claim.
\end{proof}

Let $D_0$ be the constant in Definition~\ref{def:well-prepared} and define
\[
D_{1} := \frac{4(4\pi)^{\frac{d-1}{2}}}{e^{-\frac{1}{4}}\omega_{d-1}} D_{0} + 1.
\]
By Definition~\ref{def:well-prepared}, we have
$
D_0, D_1<\infty.
$ 
Let~$0<T<\infty$. We define
\[
  D_\varepsilon (t):= 
      \sup_{r \in (0,1), x \in \R^d} \frac{\omega_t ^\varepsilon (B_r (x))}{\omega_{d-1} r^{d-1}} ,
  \qquad t \in [0,T]
\]
and
\begin{equation}
T_\varepsilon :=
\sup \{ \tau \in [0,T] \mid \sup_{t \in [0,\tau)} D_\varepsilon (t) \leq D_1\}.
\end{equation}
Then, by $D_0<D_1$, for any $\varepsilon \in (0,1)$ it holds that $T_\varepsilon >0$.
We will prove that there exists $\eps_0 \in (0,1)$ such that
\begin{equation}
T_\varepsilon = T \qquad \text{for any} \ \varepsilon \in (0,\eps_0).
\end{equation}
More precisely, we will show in Theorem~\ref{thm:density} the upper bound on the density
\begin{equation}\label{upperbound of density}
\sup_{t \in [0,T)} \sup_{\varepsilon \in (0,\eps_0)} D_\varepsilon (t) \leq D_1.
\end{equation}

We first recall the following general lemma of Ilmanen~\cite[Lemma~3.4(i)--(iii)]{Ilmanen_allencahn}.
\begin{lemma}\label{lemma:ilmanen}
  Let $\omega$ be a Radon measure on $\torus$ satisfying 
  \[
    \frac{\omega(B_r(x))}{\omega_{d-1} r^{d-1}} \leq D_0 \quad \text{for all }x\in \torus\text{ and } r>0.
  \]
  Then the following hold:
  \begin{enumerate}[(i)]
    \item \label{item:ilmanen1} $\int_{\R^d} \rho_{(y,s)}(x,t) \, \dd\omega(x) \leq D_0$  for all $y\in \torus$ and $s>t\geq 0$.
    \item \label{item:ilmanen2}
    $
    \int_{\R^d\setminus B_R(y)} \rho_{(y,s)}(x,t) \, \dd\omega(x) 
    \leq 
    2^{d-1}  e^{-\frac{3R^2}{16(s-t)}}D_0
    $ for all $y\in \torus$ and $s>t\geq 0$.
    \item \label{item:ilmanen3} For any $\delta>0$ there exists $\gamma=\gamma(\delta)>0$ such that if $s>t\geq 0$, $y,y'\in \torus$, and $R>0$ satisfy $|y-y'|\leq \gamma (s-t)^\frac12$, then
    \[
      \int_{\R^d} \rho_{(y',s)}(x,t) \, \dd\omega(x) \leq 
      (1+\delta)\int_{\R^d} \rho_{(y,s)}(x,t) \, \dd\omega(x) +\delta D_0.      
    \]
  \end{enumerate}
\end{lemma}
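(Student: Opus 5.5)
The plan is to prove all three parts by direct computation with the layer-cake (Cavalieri) representation of the integral of the radially decreasing Gaussian against $\omega$. This converts the density bound on balls into a bound on $\rho$-integrals. Throughout, $\omega$ is identified with its periodic extension to $\R^d$, as in Lemma~\ref{lemma:monotonicity formula}. The density hypothesis is understood for all radii, and it is inherited by the extension, at least up to the harmless modification of allowing $r\le 1$ plus a large-scale term. Write $\tau=s-t>0$ and $f(r)=(4\pi\tau)^{-\frac{d-1}{2}}e^{-r^2/(4\tau)}$, so that $\rho_{(y,s)}(x,t)=f(|x-y|)$.

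\textbf{Part (i).} We use
\[
\int_{\R^d} f(|x-y|)\,\dd\omega(x)=\int_0^\infty \omega(B_r(y))\,(-f'(r))\,\dd r,
\]
valid since $f$ is decreasing and vanishes at infinity. Inserting $\omega(B_r(y))\le D_0\omega_{d-1}r^{d-1}$ bounds the right-hand side by $D_0$ times the same expression computed for $(d-1)$-dimensional Lebesgue measure on a hyperplane through $y$. By the normalization of the heat kernel in dimension $d-1$, this equals $\int_{\R^{d-1}}(4\pi\tau)^{-\frac{d-1}{2}}e^{-|z|^2/4\tau}\,\dd z=1$, which gives (i).

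\textbf{Part (ii).} Restrict the layer-cake formula to $|x-y|\ge R$. This yields the bound
\[
\int_{\R^d\setminus B_R(y)}\rho_{(y,s)}(x,t)\,\dd\omega(x)\le f(R)\,\omega(B_R(y))+\int_R^\infty \omega(B_r(y))(-f'(r))\,\dd r .
\]
Use the pointwise inequality $e^{-r^2/(4\tau)}\le e^{-3R^2/(16\tau)}e^{-r^2/(16\tau)}$ for $r\ge R$. This peels off the factor $e^{-3R^2/(16\tau)}$, and what remains is the kernel with variance $4\tau$ in place of $\tau$, up to the prefactor adjustment $(4\pi\tau)^{-\frac{d-1}{2}}=2^{d-1}(4\pi\cdot4\tau)^{-\frac{d-1}{2}}$. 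Applying the computation of (i) to this wider kernel (after redoing the integration by parts) gives the bound $2^{d-1}e^{-3R^2/(16\tau)}D_0$. Some care with the derivative term is needed; the cleanest route is to write everything as $\int_R^\infty \omega(B_r)\,(\cdots)\,\dd r$ and compare termwise.

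\textbf{Part (iii).} Set $h=y'-y$ with $|h|\le\gamma\tau^{1/2}$. Then
\[
\frac{\rho_{(y',s)}(x,t)}{\rho_{(y,s)}(x,t)}=\exp\Big(\frac{2(x-y)\cdot h-|h|^2}{4\tau}\Big)\le \exp\Big(\frac{\gamma|x-y|}{2\tau^{1/2}}\Big).
\]
Choose $R=\lambda\tau^{1/2}$. On $B_R(y)$ the ratio is at most $e^{\gamma\lambda/2}\le 1+\delta$ once $\gamma\le 2\log(1+\delta)/\lambda$. Outside $B_R(y)$, bound $\rho_{(y',s)}$ directly by part (ii) applied with center $y'$. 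Since $B_{R/2}(y')\subset B_R(y)$ when $\gamma\le\lambda/2$, the complement of $B_R(y)$ lies outside $B_{R/2}(y')$, and part (ii) gives at most $2^{d-1}e^{-3\lambda^2/64}D_0$. First fix $\lambda$ so large that this is at most $\delta D_0$, then choose $\gamma$ small; this proves (iii).

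The main obstacle is the constant bookkeeping in (ii): getting exactly $2^{d-1}$ and $3/16$ through the layer-cake derivative of the Gaussian. I would handle it by splitting the exponent as above and then invoking (i) for the kernel of scale $4\tau$.
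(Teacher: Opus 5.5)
Your proof is correct in substance and follows the standard route. The paper gives no argument of its own; it simply cites Ilmanen's Lemma~3.4, which is proved in the same way: a layer-cake comparison with a hyperplane for (i), comparison with the kernel at four times the scale for (ii), and a near/far splitting for (iii). Parts (i) and (iii) are complete as you wrote them. This includes the choice of $\lambda$ with $2^{d-1}e^{-3\lambda^2/64}\le\delta$ followed by $\gamma=\min\{\lambda/2,\,2\log(1+\delta)/\lambda\}$.

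\textbf{Part (ii).} The ``main obstacle'' you identify comes only from working with the layer-cake weights. Compare integrands instead. With $\tau=s-t$ and $|x-y|\ge R$,
\[
\rho_{(y,s)}(x,t)\le 2^{d-1}e^{-\frac{3R^2}{16\tau}}\,\rho_{(y,s+3\tau)}(x,t).
\]
Integrate over $\R^d\setminus B_R(y)$, enlarge the domain to $\R^d$, and apply (i) with $s$ replaced by $s+3\tau$. This gives exactly $2^{d-1}e^{-3R^2/(16\tau)}D_0$, with no integration by parts.

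The termwise comparison inside $\int_R^\infty\omega(B_r(y))(\cdots)\,\dd r$ that you suggest would not reach this constant. Applying $-\partial_r$ to the Gaussian produces the factor $r/(2\tau)$ at scale $\tau$ but only $r/(8\tau)$ at scale $4\tau$. So the weights compare only up to a factor $4$ near $r=R$. In addition, the exact restricted layer-cake identity carries $-f(R)\,\omega(B_R(y))$, not $+$, so your displayed bound is already lossy.

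\textbf{The periodic extension.} Your sentence that the all-scales density bound is ``inherited by the extension'' is not right. For the periodic extension of a nonzero measure, $\omega(B_r(x))$ grows like $r^d\,\omega(\torus)$. Indeed, $\int_{\R^d}\rho_{(y,s)}(\cdot,t)\,\dd\omega\approx\sqrt{4\pi(s-t)}\,\omega(\torus)$ as $s-t\to\infty$. So the large-scale term is not harmless for (i) with the constant exactly $D_0$.

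Your argument is complete for a Radon measure on $\R^d$ satisfying the density bound at every scale. That is Ilmanen's setting, and it is how the lemma has to be read. For periodic extensions with the bound only for $r\le 1$, one gets the same three statements with constants depending on an upper bound for $s-t$. This is an imprecision in the statement rather than a flaw in your reasoning.
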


Then, in our situation, we have the following basic estimate.
\begin{lemma}\label{lem6}
Assume $s,R,r>0$ satisfy $0\leq s-(\frac{R}{r})^2 \leq T_\varepsilon$. 
Define $\tilde s =  s-(\frac{R}{r})^2 $. Then, for any $y\in \mathbb{T}^d$, it holds that
\begin{equation}
\begin{split}
\int _{\R^d} \rho_{(y,s)} (x,\tilde s) \, \dd\omega_{\tilde s} ^\varepsilon (x)
\leq \left( \frac{r}{\sqrt{4\pi} R} \right)^{d-1} 
\omega_{\tilde s} ^\varepsilon (B_R(y)) 
+ 2^{d-1} D_1 e^{-\frac{3 r^2}{16}}.
\end{split}
\end{equation}
\end{lemma}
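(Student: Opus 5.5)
We split the integral $\int_{\R^d}\rho_{(y,s)}(x,\tilde s)\,\dd\omega^\eps_{\tilde s}(x)$ into the contribution from $B_R(y)$ and from its complement, and estimate each piece separately. The key observation is that $s-\tilde s=(R/r)^2$, so the kernel at time $\tilde s$ has the explicit form
\[
  \rho_{(y,s)}(x,\tilde s) = \Big(\frac{r^2}{4\pi R^2}\Big)^{\frac{d-1}{2}} \exp\Big(-\frac{r^2|x-y|^2}{4R^2}\Big).
\]

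For the inner part we simply bound the exponential factor by $1$. This gives
\[
  \int_{B_R(y)}\rho_{(y,s)}(x,\tilde s)\,\dd\omega^\eps_{\tilde s}(x)\leq \Big(\frac{r}{\sqrt{4\pi}R}\Big)^{d-1}\omega^\eps_{\tilde s}(B_R(y)),
\]
which is exactly the first term on the right-hand side.

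For the outer part we invoke Lemma~\ref{lemma:ilmanen}\eqref{item:ilmanen2}, applied to the measure $\omega=\omega^\eps_{\tilde s}$ (its periodic extension to $\R^d$) with density bound $D_1$ in place of $D_0$. Its tail estimate yields $2^{d-1}e^{-\frac{3R^2}{16(s-\tilde s)}}D_1 = 2^{d-1}D_1e^{-\frac{3r^2}{16}}$, which is the second term.

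The only point needing care is justifying that $\omega^\eps_{\tilde s}$ has density bound $D_1$, since the Ilmanen lemma requires this for all $r>0$, whereas $D_\eps(t)$ only controls radii $r\in(0,1)$ and only for $t<T_\eps$.

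\textbf{Time.} By definition of $T_\eps$, $D_\eps(t)\le D_1$ for $t<T_\eps$. For $\tilde s=T_\eps$ we use that $t\mapsto\omega^\eps_t(B_r(x))$ is continuous for fixed $\eps$, since the solution is smooth. Taking the limit $t\uparrow T_\eps$ extends $D_\eps(\tilde s)\le D_1$ to $\tilde s=T_\eps$.

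\textbf{Large radii.} The restriction $r<1$ is harmless up to constants: on the torus with periodic extension, for $r\ge1$ the mass $\omega(B_r(x))$ is bounded by roughly $r^d$ times the total energy, and the Gaussian tail absorbs this. In the proof of Lemma~\ref{lemma:ilmanen} itself, the large-radius contributions are controlled by the same dyadic-annulus summation. If necessary, I would redo that summation directly: decompose $\R^d\setminus B_R(y)$ into annuli $B_{2^{k+1}R}\setminus B_{2^kR}$, bound the mass of each annulus by $D_1\omega_{d-1}(2^{k+1}R)^{d-1}$, and sum the resulting Gaussian weights to recover the stated constant. I expect this bookkeeping, rather than any real analytic difficulty, to be the main obstacle.
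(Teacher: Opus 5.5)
Your core argument is the paper's proof. You split at $B_R(y)$, use $s-\tilde s=(R/r)^2$ to bound the kernel on $B_R(y)$ by its prefactor $\big(\frac{r}{\sqrt{4\pi}R}\big)^{d-1}$, and apply Lemma~\ref{lemma:ilmanen}\eqref{item:ilmanen2} with $D_1$ on the complement. The paper simply writes $D_\eps(\tilde s)\le D_1$. Your continuity argument for the endpoint $\tilde s=T_\eps$ correctly justifies this.

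You are also right that $D_\eps$ only controls radii in $(0,1)$, while Lemma~\ref{lemma:ilmanen} needs all radii; the paper passes over this point. However, your proposed repair does not work as written.

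\textbf{Why the repair fails.} Your annulus summation uses $\omega(B_{2^{k+1}R}(y))\le D_1\omega_{d-1}(2^{k+1}R)^{d-1}$, which is exactly the bound you lack once $2^{k+1}R\ge1$. For the periodic extension, $\omega(B_\rho(y))$ grows like $\omega_d\,\omega(\torus)\rho^d$. The Gaussian tail controls this only while $s-\tilde s=(R/r)^2$ stays bounded, and makes it negligible only when that quantity is small.

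In fact no bookkeeping recovers the stated bound for arbitrary $R$. Take $r=1$, $\omega=\omega^\eps_{\tilde s}\neq0$ and let $R\to\infty$. The left-hand side is asymptotic to $\sqrt{4\pi}\,\omega(\torus)R$. The right-hand side is $(4\pi)^{-\frac{d-1}{2}}\omega_d\,\omega(\torus)R+O(1)$, which is smaller by the factor $\omega_d(4\pi)^{-\frac d2}<1$.

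\textbf{What does work.} Since $\omega(B_1(y))=\lim_{\rho\uparrow1}\omega(B_\rho(y))\le D_1\omega_{d-1}$, the restriction $\omega\llcorner B_1(y)$ satisfies the density bound $D_1$ at all radii. Lemma~\ref{lemma:ilmanen}\eqref{item:ilmanen2} applied to it gives exactly $2^{d-1}D_1e^{-3r^2/16}$ for the contribution of $B_1(y)\setminus B_R(y)$.

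On $\R^d\setminus B_1(y)$, cover $B_\rho(y)$, $\rho\ge1$, by $C(d)\rho^d$ unit balls, each of mass at most $D_1\omega_{d-1}$. Gaussian decay then gives an extra error $C(d)D_1e^{-r^2/(8R^2)}$, provided $(R/r)^2\le1$.

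This modified estimate suffices for the only application, Lemma~\ref{lemma43}. There $(R/r)^2=s+\eps^2-t\le2\eps^{2\beta'}+\eps^2\to0$, so the extra error is $o(1)$ and is absorbed by the margin $a$ in~\eqref{427}.
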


\begin{proof}
By the definition of $\rho$, one can check that
\begin{equation}
\begin{split}
\int _{B_R (y)} \rho_{(y,s)} (x,\tilde s) \, \dd\omega_{\tilde s} ^\varepsilon (x)
= & \, \left( \frac{r}{\sqrt{4\pi} R} \right)^{d-1} 
\int _{B_R (y)} e^{-\frac{r^2 |x-y|^2}{4R^2}} \, \dd\omega_t ^\varepsilon (x) \\
\leq & \, \left( \frac{r}{\sqrt{4\pi} R} \right)^{d-1} \omega_{\tilde s} ^\varepsilon (B_R (y)).
\end{split}
\end{equation}
By Lemma~\ref{lemma:ilmanen}\eqref{item:ilmanen2}, we have
\begin{equation}
\int_{\R^d \setminus B_R (y)} \rho _{(y,s)} (x,\tilde s) \, \dd\omega_{\tilde s} ^\varepsilon 
\leq 2^{d-1} D_\varepsilon (\tilde s) e^{-\frac{3R^2}{8 \cdot 2(R/r)^2}}
\leq 2^{d-1} D_1 e^{-\frac{3r^2}{16}},
\end{equation}
which concludes the proof.
\end{proof}

Hereafter, we will need our assumption $0<\alpha<1$ from Definition~\ref{def:well-prepared} and denote
\[
\beta= \frac{1+\alpha}{2} \quad \text{and} \quad \beta' =\frac{1+\beta}{2}.
\]
Note that $0<\beta<\beta'<1$.
The next lemma provides a lower bound on the ($d-1$)-dimensional density of $\omega_t^\varepsilon$.
\begin{lemma}\label{lemma43}
There exist $c_1 \in (0,1)$, $c_2 \geq 1$, and $\varepsilon_{2}$ with $0 <\varepsilon_{2} \le \varepsilon_{1}$ such that
if $\varepsilon \in (0, \varepsilon_{2})$ and for some $i\in \{ 1,\dots,n \}$ it holds that $\frac14 < u_i (y, s) < \frac34$ with $s \in (0,T_{\varepsilon})$, then
for any $t \in [0, T_{\varepsilon}]$ with $\max \{0,s-2\varepsilon^{2\beta^{\prime}}\} \le t \le s$, we have
\begin{equation*}
c_1 \le \frac{1}{R^{d-1}} \omega _{t}^{\varepsilon} (B_{R}(y)),
\end{equation*}
where $R=c_2 (s+\varepsilon^{2}-t)^{1/2}$.
\end{lemma}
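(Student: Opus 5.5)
The plan follows Ilmanen's and Takasao--Tonegawa's lower density argument. There are three steps: a lower bound at the parabolic scale $\eps$ coming from the gradient bound, propagation backward in time by the monotonicity formula (Lemma~\ref{lemma:monotonicity formula}) with the kernel centred at $(y,s+\eps^2)$, and conversion of the kernel integral into a ball integral via Lemma~\ref{lem6}. The pointwise discrepancy bound \eqref{discrepancy1} is what makes the error term small. This is where the time window $2\eps^{2\beta'}$ and the gap $\beta'>\beta$ enter.

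\emph{Step 1 (scale $\eps$).} By \eqref{gradient-est} we have $|\nabla u_i|\le C/\eps$. Hence there is $\gamma\in(0,1)$, independent of $\eps$, with $\frac18<u_i(x,s)<\frac78$ for $x\in B_{\gamma\eps}(y)$. On this set $F_\eps(\uu)\ge \frac1\eps W(u_i)\ge c/\eps$, so $\omega^\eps_s(B_{\gamma\eps}(y))\ge c\,\eps^{d-1}$. Since $\rho_{(y,s+\eps^2)}(x,s)=(4\pi\eps^2)^{-\frac{d-1}{2}}e^{-|x-y|^2/(4\eps^2)}$, this yields
\[
\int_{\R^d}\rho_{(y,s+\eps^2)}(x,s)\,\dd\omega^\eps_s(x)\ge c_3>0
\]
with $c_3$ depending only on $d$, $W$ and the constant in \eqref{gradient-est}.

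\emph{Step 2 (monotonicity backward in time).} Integrate Lemma~\ref{lemma:monotonicity formula} with $(y,s+\eps^2)$ in place of $(y,s)$ over $\tau\in[t,s]$, where $s-t\le 2\eps^{2\beta'}$, and drop the $\xi_-$ term:
\[
\int\rho\,\dd\omega^\eps_s\le\int\rho\,\dd\omega^\eps_t+\int_t^s\frac{1}{2(s+\eps^2-\tau)}\int\rho\,\dd\xi^\eps_{\tau,+}\,\dd\tau .
\]
By the Remark at the beginning of Section~\ref{sec:discrepancy upper bound} and by \eqref{discrepancy1}, the density of $\xi^\eps_{\tau,+}$ is at most $nC\eps^{-\beta}$. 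Using $\int_{\R^d}\rho_{(y,s+\eps^2)}(x,\tau)\,\dd x=(4\pi(s+\eps^2-\tau))^{1/2}$, the error is bounded by
\[
C\eps^{-\beta}\int_t^s(s+\eps^2-\tau)^{-1/2}\,\dd\tau\le C\eps^{-\beta}(s-t)^{1/2}\le C\eps^{\beta'-\beta}.
\]
Since $\beta'>\beta$, choosing $\eps_2$ small makes this at most $c_3/2$. Therefore $\int\rho_{(y,s+\eps^2)}(x,t)\,\dd\omega^\eps_t\ge c_3/2$ for every admissible $t$, including $t=s$.

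\emph{Step 3 (from the kernel to a ball).} Apply Lemma~\ref{lem6} with $s+\eps^2$ in place of $s$, $\tilde s=t$ and $R=r(s+\eps^2-t)^{1/2}$. This is legitimate because $t\le T_\eps$, so the density bound $D_1$ is available, which is exactly why the statement restricts to $t\in[0,T_\eps]$. It gives
\[
\frac{c_3}{2}\le\Big(\frac{r}{\sqrt{4\pi}R}\Big)^{d-1}\omega^\eps_t(B_R(y))+2^{d-1}D_1e^{-\frac{3r^2}{16}}.
\]
Choose $r=c_2\ge1$ large enough that the last term is at most $c_3/4$. Then $\omega^\eps_t(B_R(y))\ge \frac{c_3}{4}(4\pi)^{\frac{d-1}{2}}c_2^{1-d}R^{d-1}$, which defines $c_1$.

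The main obstacle is Step 2. One must check that the integrated discrepancy error against the singular weight $\frac{1}{s+\eps^2-\tau}$ is genuinely small. The shift $s\mapsto s+\eps^2$ removes the singularity, and the pointwise bound $\eps^{-\beta}$ combined with the window length $\eps^{2\beta'}$ gives the decay $\eps^{\beta'-\beta}$. One also notes that $R\le c_2(3\eps^{2\beta'})^{1/2}$ is small, so periodicity causes no issue.
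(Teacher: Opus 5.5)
Your proposal is correct and follows the paper's proof essentially step for step: a lower bound at scale $\eps$ for the kernel centred at $(y,s+\eps^2)$, propagated backward by the monotonicity formula with the pointwise discrepancy bound \eqref{discrepancy1} (error $C\eps^{\beta'-\beta}$), then converted into a ball estimate via Lemma~\ref{lem6} with a large choice of $r=c_2$. The only differences are cosmetic. You use an explicit ball $B_{\gamma\eps}(y)$ and the slightly sharper bound $(s-t)^{1/2}$ in place of $(s+\eps^2-t)^{1/2}$. You also use the thresholds $c_3$, $c_3/2$, $c_3/4$ where the paper uses $5a$, $2a$, $a$.
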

\begin{proof}
Assume that $\frac14 < u_i (y,s)<\frac34$ for some $i \in \{1,\dots,n\}$.
Then we compute
\[
\int_{\mathbb{R}^d} \frac{e^{-\frac{|\tilde{x}|^2 }{4} }}{(4\pi )^{d-1}} W(\tilde u_i) \, \dd\tilde{x}
\leq
\int_{\mathbb{R}^d} \rho_{(y,s+\varepsilon^2)} (x,s) \, \dd\omega_{s}^{\varepsilon} (x),
\]
where $\tilde u_i (\tilde{x}, s) = u_i (y+\varepsilon \tilde{x}, s)$.
From the gradient estimate \eqref{gradient-est}, we have $\sup_{\tilde x, s}|\nabla_{\tilde x} \tilde u_i |\leq C$ for some constant $C>0$.
By this and $\frac14 < u_i(0,s) <\frac34$, there exists $a \in (0,1)$ such that 
\begin{equation}\label{422}
5a \le \int_{\R^d} \rho_{(y,s+\varepsilon^2)} (x,s) \, \dd\omega_{s}^{\varepsilon} (x),
\end{equation}
since there exists $\tilde R>0$ such that $\frac18 < u_i(\tilde x,s) <\frac78$ for any $\tilde x \in B_{\tilde R} (0)$ and thus there exists $c>0$ such that $W(\tilde u_i )>c$ on $B_{\tilde R}(0)$.
From the monotonicity formula and the pointwise estimate \eqref{discrepancy1}, for $\rho=\rho _{(y,s+\varepsilon^2)} (x,s)$, we have
\begin{equation*}
\begin{split}
\left. \int_{\R^d } \rho \,  \dd\omega_{\tau}^{\varepsilon} \right|_{\tau = t}^{s}  
\le& \int_{t}^{s} \int_{\mathbb{R}^d} \frac{\rho}{2(s+\varepsilon^{2}-\tau)} (\xi_{\tau})_+ \, \dd x\, \dd\tau\\
\le& C \int_{t}^{s} \frac{\varepsilon^{-\beta}}{\sqrt{s+\varepsilon^{2}-\tau}} \, \dd\tau 
\leq C \varepsilon^{-\beta} \sqrt{s+\varepsilon ^2 -t}
\\
\le&C \varepsilon^{\beta^{\prime}-\beta}.
\end{split}
\end{equation*}
Here we used $\int_{\mathbb{R}^d} \rho \,  \dd x \le \sqrt{4\pi (s+\varepsilon^{2}-\tau)}$ and $2\varepsilon ^{2\beta'} \geq s-t$ by $s-2\varepsilon ^{2\beta'} \leq t$.
Therefore
\begin{equation}\label{424}
\begin{split}
\int_{\mathbb{R}^d} \rho \,  \dd\omega_{s}^{\varepsilon} \le & 
\int_{\mathbb{R}^d} \rho \, \dd\omega_{t}^{\varepsilon} + C \varepsilon^{\beta^{\prime}-\beta}.
\end{split}
\end{equation}
By \eqref{422} and \eqref{424}, for sufficiently small $\varepsilon$ we have
\begin{equation}\label{426}
2a \le \int_{\mathbb{R}^d} \rho \,  \dd \omega_{t} ^{\varepsilon}.
\end{equation}
Next we apply Lemma \ref{lem6} with $r=\sqrt{\frac{16}{3}\log (2^{d-1} D_{1} a^{-1})}$. 
We note that $2^{d-1} D_{1}e^{-\frac{3 r^{2}}{16}}=a$. 
Then we have
\begin{equation}\label{427}
2a \leq 
\int_{\mathbb{R}^d} \rho \, \dd\omega _{t}^{\varepsilon} \le \left( \frac{r}{\sqrt{4\pi}R} \right)^{d-1} \omega_{t}^{\varepsilon} (B_{R}(y) )  +2^{d-1} D_{1} e^{-\frac{3 r^{2}}{16}}
=\left( \frac{r}{\sqrt{4\pi}R} \right)^{d-1} \omega_{t}^{\varepsilon} (B_{R}(y) )  +a. 
\end{equation}
Note that $(s+\varepsilon ^2) - t =(\frac{R}{r})^2$, that is, 
$R=r(s+\varepsilon^{2}-t)^{1/2}$.
From \eqref{426} and \eqref{427}, for sufficiently small $\varepsilon$ we obtain
\[
a \le \left( \frac{r}{\sqrt{4\pi} R} \right)^{d-1} \omega_{t}^{\varepsilon} (B_{R}(y)).
\]
This completes the proof.
\end{proof}

Set
\[
\tilde \xi (x,t):=\sum_{i=1} ^n \left( \frac{\varepsilon |\nabla u_i (x,t)|^2}{2} -\frac{W(u_i(x,t))}{\varepsilon}\right) .
\] 
Next we give a sharp estimate for the positive part of the discrepancy measure.
\begin{lemma}\label{lem4.4}
There exist $\varepsilon_{3} \in (0, \varepsilon_{2})$ and $C>0$ such that for any $y\in \mathbb{T}^d$, $r \in (\varepsilon^{\beta^{\prime}} , \infty)$, and $t \in [2\varepsilon^{2\beta^{\prime}}, T_{\varepsilon}]$, we have
\begin{equation}\label{431}
\int_{B_{r}(y) } ( \tilde \xi (x,t) )_{+} \, \dd x \le C \varepsilon^{\beta^{\prime}-\beta} r^{d-1},
\end{equation}
provided $0<\varepsilon<\varepsilon_{3}$.
\end{lemma}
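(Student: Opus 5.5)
We split the positive part of $\tilde\xi$ according to whether some component $u_i$ is in a transition regime. Concretely, for each $i$ we decompose $B_r(y)$ into the set $A_i:=\{x\in B_r(y): \frac14<u_i(x,t)<\frac34\}$ and its complement. By the pointwise bound \eqref{discrepancy1}, each summand of $\tilde\xi$ satisfies $\big(\frac\eps2|\nabla u_i|^2-\frac1\eps W(u_i)\big)_+\le C\eps^{-\beta}$ everywhere. On $A_i$ we therefore only need the Lebesgue measure estimate
\[
  \mathcal L^d\big(A_i\cap B_r(y)\big)\le C\eps^{\beta'} r^{d-1}.
\]
This would give a contribution $C\eps^{\beta'-\beta}r^{d-1}$. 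On the complement, where $u_i$ is close to $0$ or $1$, we want to show that the discrepancy of that component is exponentially small, or at least $O(\eps^{\beta'-\beta})$ in the averaged sense.

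\emph{Measure of the transition region.} We cover $A_i\cap B_r(y)$ by balls $B_{\eps^{\beta'}}(z)$ centered at points $z\in A_i$, and pass to a Vitali subfamily of disjoint balls $B_{\eps^{\beta'}}(z_k)$ whose fivefold enlargements cover the set.
\begin{itemize}
\item We apply Lemma~\ref{lemma43} at $(z_k,t)$ with an earlier time $t'=t-\eps^{2\beta'}$; this is admissible since $t\ge 2\eps^{2\beta'}$. It gives $\omega^\eps_{t'}(B_{R}(z_k))\ge c_1R^{d-1}$ with $R\approx c_2\eps^{\beta'}$.
\item To compare with time $t$ rather than $t'$, it is cleaner to apply Lemma~\ref{lemma43} with $s=t$ and the time itself equal to $t$. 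This gives $R=c_2\eps$, which is too small. So instead we use the lemma with $s=t$ and $t'=t-\eps^{2\beta'}$, giving $\omega^\eps_{t'}(B_{c_2\sqrt2\eps^{\beta'}}(z_k))\ge c\,\eps^{\beta'(d-1)}$.
\end{itemize}
Disjointness (after enlarging radii by the fixed factor $c_2\sqrt2$, which requires a bounded-overlap count) together with the density bound $D_\eps(t')\le D_1$ on $B_{2r}(y)$ yields
\[
  \#\{k\}\,\eps^{\beta'(d-1)}\le C\,\omega^\eps_{t'}(B_{2r}(y))\le CD_1 r^{d-1}.
\]
Hence $\mathcal L^d(A_i\cap B_r(y))\le C\#\{k\}\eps^{\beta' d}\le C\eps^{\beta'}r^{d-1}$. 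This step uses $r>\eps^{\beta'}$ so that the enlarged balls stay within $B_{2r}(y)$ with $2r\le$ a fixed multiple, and we treat $r\ge1$ by covering with unit balls.

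\emph{Complement of the transition region.} Where $u_i\le\frac14$ or $u_i\ge\frac34$, we rescale parabolically by $\eps$ as in the proof of \eqref{discrepancy1}. There the equation \eqref{rescaledeq} is a small perturbation, of size $\eps^{1-\alpha}$, of $\partial_t u=\Delta u-W'(u)$, and $W''>0$ near the wells. Set $d_x$ to be the distance from $x$ to $A_i$, measured in $\eps$-units. A barrier/comparison argument in the rescaled variables shows that $u_i$, resp.\ $1-u_i$, decays like $e^{-c\,d_x}+C\eps^{1-\alpha}$. Interior parabolic estimates then give the same bound for $|\nabla u_i|$, so the discrepancy density is bounded by $\eps^{-1}(e^{-c\,d_x/\eps}+C\eps^{1-\alpha})$.

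The perturbative term is awkward, but it is multiplied by $\sqrt{2W(u_i)}$, which vanishes at the wells. Hence the fixed points $0$ and $1$ persist, and the decay is genuinely exponential without the additive error. Integrating the exponential tail over shells around $A_i$ reduces, by the same covering count as above, to $C\eps^{\beta'}r^{d-1}\cdot\eps^{-1}\cdot\eps$. For points farther than $\eps^{\beta'}$ from $A_i$, the contribution is $e^{-c\eps^{\beta'-1}}$, which is negligible.

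\emph{Main obstacle.} The main difficulty is the exponential-decay step. It requires the time interval $[t-\eps^{2\beta'},t]$ to be long compared with $\eps^2$, which holds because $\beta'<1$. It also requires that $u_i$ stay away from $(\frac14,\frac34)$ on a parabolic neighbourhood, which must be handled by applying the covering at all times in that window. If this turns out too delicate, the fallback is simpler: in the well region the discrepancy is already bounded pointwise by \eqref{discrepancy1}. The remaining task is then only to control where $\frac\eps2|\nabla u_i|^2$ is large away from $A_i$, which again reduces via the gradient decay to the measure estimate.
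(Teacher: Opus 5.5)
Your architecture matches the paper's: a bad set near the transition layer, measured via Lemma~\ref{lemma43}, Vitali and the density bound, where \eqref{discrepancy1} is used; and decay elsewhere from $W''>0$ near the wells over a window of length $\eps^{2\beta'}\gg\eps^2$. There is a genuine gap, however: the bad set you measure is not the one your far-field step needs. You estimate only the single-time set $A_i=\{x:\frac14<u_i(x,t)<\frac34\}$. Your barrier step needs $u_i$ to avoid $(\frac14,\frac34)$ on the whole cylinder $B_{\eps^{\beta'}}(x)\times[t-\eps^{2\beta'},t]$. So what must be excised is the $\eps^{\beta'}$-neighbourhood of the space-time transition set $\tilde A=\{x\in B_{2r}(y):\frac14<u_i(x,\tilde t)<\frac34\text{ for some }\tilde t\in[t-\eps^{2\beta'},t]\text{ and some }i\}$.

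Your remedy, ``applying the covering at all times in that window'', does not work. Measure bounds for each time slice give no control on the union over a continuum of times, since nothing bounds how far the layer moves in time $\eps^{2\beta'}$. The missing idea is that Lemma~\ref{lemma43} allows the base time to lie up to $2\eps^{2\beta'}$ before $s$. For each Vitali centre $x_j\in\tilde A$ with its own time $\tilde t_j$, apply the lemma with $s=\tilde t_j$ and the \emph{common} time $\hat t=t-2\eps^{2\beta'}$. Then $\tilde t_j-\hat t\in[\eps^{2\beta'},2\eps^{2\beta'}]$, so $R_j\in[c_2\eps^{\beta'},2c_2\eps^{\beta'}]$. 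All the lower bounds $c_1R_j^{d-1}\le\omega^\eps_{\hat t}(B_{R_j}(x_j))$ then refer to the single measure $\omega^\eps_{\hat t}$. That is what lets you sum over disjoint balls and apply $D_\eps(\hat t)\le D_1$; taking Vitali balls of radius $2c_2\eps^{\beta'}$ also removes your overlap count. This is exactly why the hypothesis is $t\ge 2\eps^{2\beta'}$; your choice $t'=t-\eps^{2\beta'}$ would only need $t\ge\eps^{2\beta'}$.

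Second, the zone between $A_i$ and distance $\eps^{\beta'}$ is mishandled. ``Integrating the exponential tail over shells'' needs shells of width $\eps$ to have measure $\lesssim\eps\,r^{d-1}$. That is a bound on $\mathcal{H}^{d-1}$ of distance level sets at scale $\eps$, which a covering at scale $\eps^{\beta'}$ does not provide. The covering only gives $\mathcal{L}^d(\{\mathrm{dist}<\rho\})\le C\eps^{\beta'}r^{d-1}$ uniformly in $\rho\le\eps^{\beta'}$, hence only $C\eps^{\beta'-1}r^{d-1}$. Instead, put the whole $\eps^{\beta'}$-neighbourhood $A$ of $\tilde A$ into the bad set. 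The same Vitali count gives $\mathcal{L}^d(A)\le C\eps^{\beta'}r^{d-1}$, and $(\tilde\xi)_+\le C\eps^{-\beta}$ there.

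With both corrections, your far-field argument is a valid alternative to the paper's. On a cylinder where $0\le u_i\le\frac14$, you have $W'(u)\ge\frac34u$ and $\sqrt{2W(u)}\le\sqrt2u$. So in rescaled variables $\partial_tu_i-\Delta u_i\le-\frac12u_i$ for small $\eps$. A $\cosh$-barrier plus interior estimates then give the pointwise bound $\frac\eps2|\nabla u_i|^2\le C\eps^{-1}e^{-c\eps^{\beta'-1}}$.

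The paper instead differentiates the system and tests with $\varphi^2\partial_{x_k}u_i$, where $\varphi$ is a cutoff with $|\nabla\varphi|\le2\eps^{-\beta'}$ supported off $\tilde A$. Using $W''\ge c$ it gets $F'\le-\frac{C}{\eps^2}F+C\eps^{-2\beta'}M$ for the localized Dirichlet energy. It then bounds $\eps M$ by the density bound, which yields $C\eps^{2(\beta'-\beta)}r^{d-1}$. The paper's route is purely integral and needs no comparison or regularity theory. Yours gives a stronger, pointwise and exponentially small, bound at the cost of that machinery.
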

\begin{proof}
Let $y \in \mathbb{T}^d$, $r \in  (\varepsilon^{\beta^{\prime}} , \infty)$ and fix $t_{\ast} \in [2\varepsilon^{\beta^{\prime}} , T_{\varepsilon}]$.
We denote
\[
\tilde{A} :=\Big\{ x \in B_{2r}(y) \,\Big|\, \frac14 < u_i (x, \tilde{t}) <\frac34 \ \text{for some} \ \tilde{t} \in [t_{\ast}-\varepsilon^{2\beta^{\prime}},  t_{\ast}] \ \text{and for some} \ i\in\{1,\ldots,n\} \Big\}
\]
and
\[
A:=\{ x \in B_{2r+2c_{2}\varepsilon^{\beta^{\prime}}} (y) \mid \text{dist}(x, \tilde{A}) < 2c_{2}\varepsilon^{\beta^{\prime}} \} .
\]
By Vitali's covering theorem applied to $\mathcal{F}=\{ \bar{B}_{2c_{2}\varepsilon^{\beta^{\prime}}} (x) | x \in \tilde{A} \} $, there exists a set of pairwise disjoint balls 
$\{ \bar{B}_{2c_{2}\varepsilon^{\beta^{\prime}}} (x_{j}) \}_{j=1}^{N}$ such that 
\begin{equation*}
x_{j} \in \tilde{A} \; \text{for each} \; j=1, \ldots , N \; \text{and} \; A \subset \bigcup_{j=1}^{N} \bar{B}_{10c_{2}\varepsilon^{\beta^{\prime}}} (x_{j}).
\end{equation*}
By the definition of $\tilde{A}$, for any $j=1,\dots,N$ there exists $\tilde{t_{j}}$ and $ i(j) \in \{1,\dots,n\}$ such that 
\begin{equation}\label{432}
t_{\ast} - \varepsilon^{2\beta^{\prime}} \le \tilde{t_{j}} \le t_{\ast} 
\quad
\text{and}
\quad
 \frac14 < u_{i(j)} (x_{j}, \tilde{t_{j}})< \frac34.
\end{equation}
We define $\hat{t} :=t_{\ast} -2\varepsilon^{2\beta^{\prime}}.$
Note that $\hat{t} \geq 0$ since $t_{\ast} \geq 2\varepsilon^{2\beta^{\prime}}.$ 
By \eqref{432}, 
\[
\varepsilon^{2\beta^{\prime}}  \le \tilde{t_{i}}-\hat{t}  \le 2\varepsilon^{2\beta^{\prime}} .
\]
Applying Lemma \ref{lemma43} for $s=\tilde{t_{j}}, y=x_{j}, t=\hat{t}$ and $R_{j}:=c_{2}(\tilde{t_{j}} + \varepsilon^{2} - \hat{t})^{\frac{1}{2}}$, we obtain
\[
c_{1}R_{j}^{d-1} \le \omega_{\hat{t}}^{\varepsilon} (B_{R_{j}} (x_{j})), \; j=1, \ldots , N.
\]
For $c_{3}:=c_{1}c_{2}^{d-1},$ we have
\[
c_{3}\varepsilon^{\beta^{\prime} (d-1)} \le \omega_{\hat{t}}^{\varepsilon} (B_{2c_{2}\varepsilon^{\beta^{\prime}}} (x_{j})),
\]
because $c_{2}\varepsilon^{\beta^{\prime}} \le c_{2}(\varepsilon^{2\beta^{\prime}} +\varepsilon^{2} )^{\frac{1}{2}} \le R_{i} \le 2c_{2}\varepsilon^{\beta^{\prime}}$ by \eqref{432}.
Since the balls $\{ B_{2c_{2}\varepsilon^{\beta^{\prime}}}(x_{j}) \}_{j=1}^{N} $ are pairwise disjoint and $B_{2c_{2}\varepsilon^{\beta^{\prime}}}(x_{j}) \subset B_{2r+2c_{2}\varepsilon^{\beta^{\prime}}}(y),$ we have
\[
Nc_{3}\varepsilon^{\beta^{\prime} (d-1)} \le \omega_{\hat{t}}^{\varepsilon} (B_{2r+2c_{2}\varepsilon^{\beta^{\prime}}} (y)).
\]
Hence by $A \subset \bigcup_{j=1}^{N} \bar{B}_{10c_{2}\varepsilon^{\beta^{\prime}}} (x_{j}),$ we have
\[
\mathcal{L}^{d} (A) \le N \omega_{d} (10c_{2}\varepsilon^{\beta^{\prime}} )^{d} \le \frac{\omega_{d} (10c_{2})^{d} \varepsilon^{\beta^{\prime}}}{c_{3}} \omega_{\hat{t}}^{\varepsilon} (B_{2r+2c_{2}\varepsilon^{\beta^{\prime}}} (y)).
\]
By $r \geq \varepsilon^{\beta^{\prime}}$ and the definition of $D_1$, 
\begin{equation*}
\mathcal{L}^{d} (A) \le \frac{\omega_{d} (10c_{2})^{d} \varepsilon^{\beta^{\prime}}}{c_{3}} D_{1}\omega_{d-1}(2r+2c_{2}\varepsilon^{\beta^{\prime}})^{d-1} \le c_{4}\varepsilon^{\beta^{\prime}}r^{d-1}, 
\end{equation*}
where $c_{4}:= \omega_{d} \omega_{d-1} (10c_{2})^{d}(2+2c_{2})^{d-1}D_{1}c_{3}^{-1}$.
By this and $\tilde \xi\leq \varepsilon ^{-\beta}$, 
\begin{equation}\label{434}
\int_{A \cap B_{r}(y)} ( \tilde \xi(x,t_\ast))_{+} \, \dd x \le \mathcal{L}^{d} (A) C\varepsilon^{-\beta} \le C c_{4}\varepsilon^{\beta^{\prime}-\beta}r^{d-1}.
\end{equation}
Next define $\varphi \in \text{Lip}(B_{2r}(y)) $ such that
\[
\varphi (x) :=
\begin{cases}
1 & \text{if }  x \in B_{r}(y) \backslash A, \\
0 & \text{if }  \text{dist}(x, B_{r}(y) \backslash A) \geq \varepsilon^{\beta^{\prime}}, 
\end{cases}
\] 
and
\[
|\nabla \varphi| \le 2\varepsilon^{-\beta^{\prime}}
\quad
\text{and}
\quad
0\le \varphi \le 1.
\]
We note that $\mathrm{supp} \,  \varphi \cap \tilde{A} = \emptyset$ since $r \geq \varepsilon^{\beta^{\prime}} $ and $2c_{2}\varepsilon^{\beta^{\prime}} >\varepsilon^{\beta^{\prime}}$.
Thus,
\begin{equation*}
u_i (x,s) \in [0,\frac14] \cup [\frac34,1] \quad 
\text{for any} \ x \in \mathrm{supp} \,  \varphi , \, s \in [t_{\ast}-\varepsilon^{2\beta^{\prime}}, t_{\ast}], \ \text{and} \ i =1,\dots,n.
\end{equation*}
Hence, there exists $C>0$ such that
\begin{equation}
W'' (u_i (x,s)) \geq C \qquad \text{for any} \ x \in \mathrm{supp} \,  \varphi , \, s \in [t_{\ast}-\varepsilon^{2\beta^{\prime}}, t_{\ast}], \ \text{and} \ i =1,\dots,n.
\end{equation}
For each $k=1,\dots, d$ differentiate the equation \eqref{eq:allen cahn} with respect to $x_{k}$, multiply $\varphi^{2}\frac{\partial u_i }{\partial x_{k}} $, sum over $k$ and integrate to obtain
\begin{equation}\label{436}
\begin{split}
& \, \frac{1}{2} \frac{\dd}{\dd t} \int_{\mathbb{R}^d} \varphi^{2} |\nabla \uu|^{2} \,\dd x \\
=& \, - \int_{\mathbb{R}^d} |\nabla^{2} \uu |^{2} \varphi^{2} \, \dd x - 2 \sum_{i=1} ^n  \int_{\mathbb{R}^d} \varphi \nabla \varphi \otimes \nabla u_i \cdotdot \nabla^2 u_i \, \dd x 
- \sum_{i=1} ^n \int_{\R^d} \varphi^2 |\nabla u_i |^2 \frac{W'' (u_i)}{\varepsilon^2} \, \dd x\\
&+ \sum_{i=1} ^n \left\{ \int_{\R^d } \varphi ^2 \nabla u_i \nabla (g(\uu)) \frac{\sqrt{2W(u_i)}}{\varepsilon^{1+\alpha}} + \varphi ^2 |\nabla u_i|^2 g(\uu) \frac{W'(u_i)}{\sqrt{2W(u_i)}} \frac{1}{\varepsilon ^{1+\alpha}} \, \dd x\right\}\\
\leq& \, C \int_{\mathrm{supp} \,  \varphi} |\nabla \varphi| ^2 |\nabla \uu|^2 \, \dd x
- \frac{C}{\varepsilon^2} \int_{\R^d} \varphi ^2 |\nabla \uu|^2 \, \dd x \\
\leq& \, C\varepsilon ^{-2\beta'} \int_{\mathrm{supp} \,  \varphi} |\nabla \uu|^2 \, \dd x
- \frac{C}{\varepsilon^2} \int_{\R^d} \varphi ^2 |\nabla \uu|^2 \, \dd x
\end{split}
\end{equation}
for sufficiently small $\varepsilon$, where $g(\uu):=\Big(\sigma - \sum_{j=1}^n \phi(u_j)\Big)$. 
Here we used $W'' (u_i (x,s)) \geq C$,
\[
\sum_{i=1} ^n \left\{ \int_{\R^d } \varphi ^2 \partial_{x_k} u_i \partial_{x_k} (g(\uu)) \frac{\sqrt{2W(u_i)}}{\varepsilon^{1+\alpha}} + \varphi ^2 |\nabla u_i|^2 g(\uu) \frac{W'(u_i)}{\sqrt{2W(u_i)}} \frac{1}{\varepsilon ^{1+\alpha}} \, \dd x\right\}
\leq \frac{C}{\varepsilon ^{1+\alpha}} \int_{\R^d} \varphi^2 |\nabla \uu|^2 \, \dd x,
\]
and $|\nabla \varphi|\leq 2\varepsilon ^{-\beta'}$. Note that $\|u_i \|_\infty \leq 1$ and 
$\sup_{0\leq s \leq 1}\frac{|W'(s)|}{\sqrt{2W(s)}} <\infty$.
Set 
\[
F(t):=\frac12 \int_{\mathbb{R}^d} \varphi^{2} |\nabla \uu (x,t)|^{2} \, \dd x,
\qquad
M:=\sup\limits_{\tau \in [t_{\ast}-\varepsilon^{2\beta^{\prime}}, t_{\ast}]} \int_{\mathrm{supp} \,  \phi} \frac{1}{2} |\nabla \uu (x, \tau) |^{2} \, \dd x.
\]
Then we have
\[
\frac{\dd}{\dd t} F(t) \leq -\frac{C}{\varepsilon^2} F(t) + C\varepsilon^{-2\beta'} M
\]
and by integrating this over $[t_{\ast}-\varepsilon^{2\beta^{\prime}}, t_{\ast}]$, we obtain
\begin{equation}\label{437}
\begin{split}
F(t_\ast) \leq M (e^{-C\varepsilon^{2(\beta' -1)}} + C\varepsilon^{2(1-\beta')} ). 
\end{split}
\end{equation}
By $\mathrm{supp} \,  \phi \subset B_{2r}(y)$, it holds that
\[
\int_{\mathrm{supp} \,  \phi} \frac{\varepsilon}{2}|\nabla \uu|^{2} \, \dd x \le \mu_{t}^{\varepsilon}(B_{2r}(y)) \le \omega_{d-1}D_{1}(2r)^{d-1} 
\]
for all $t \in [t_{\ast}-\varepsilon^{2\beta^{\prime}}, t_{\ast}]$ and thus
$\varepsilon M \le \omega_{d-1}D_{1}(2r)^{d-1}$.
Hence, for sufficiently small $\varepsilon>0$,
\begin{equation*}
\begin{split}
& \int_{\R^d} \frac{\varepsilon}{2} |\nabla \uu |^{2} \varphi^{2} (x, t_{\ast}) \, \dd x 
=\varepsilon F (t_\ast)
\le \varepsilon M (e^{-C\varepsilon^{2(\beta' -1)}} + C\varepsilon^{2(1-\beta')} ) \\
\le & \, \omega_{d-1}D_{1}(2r)^{d-1} (e^{-C\varepsilon^{2(\beta' -1)}} + C\varepsilon^{2(1-\beta')} ) 
\le  
\omega_{d-1}D_{1}(2r)^{d-1} (2C\varepsilon^{2(1-\beta')} ) \\
= & \, 
\omega_{d-1}D_{1}(2r)^{d-1} (2C\varepsilon^{2(\beta'-\beta)} ) ,
\end{split}
\end{equation*}
where we used $0<\beta'<1$ and $1-\beta'=\beta' -\beta$. 
Since $B_{r} \backslash A \subset \{ \varphi =1 \}$, we have
\begin{equation}\label{439}
\int_{B_{r} (y) \backslash A} \frac{\varepsilon}{2} |\nabla \uu|^{2} \varphi^{2} (x, t_{\ast}) \, \dd x 
\le \int_{\R^d} \frac{\varepsilon}{2} |\nabla \uu|^{2} \varphi^{2} (x, t_{\ast}) \, \dd x \le C \varepsilon^{\beta^{\prime}-\beta}r^{d-1}.
\end{equation}
By \eqref{434} and \eqref{439}, we obtain
\begin{equation*}
\int_{B_{r}(y) }  ( \tilde \xi(x,t_\ast))_{+}  \, \dd x 
\le C \varepsilon^{\beta^{\prime}-\beta}r^{d-1}.
\end{equation*}
The proof is complete.
\end{proof}

Now we are able to improve upon the integral estimate from Theorem~\ref{thm:positive part of disrepancy measure}, by showing that even when integrating against the singular kernel $\frac1{s-t}\rho_{(y,s)}(x,t) \chi_{t\leq s}$ that appears on the right-hand side of the monotonicity formula in Lemma~\ref{lemma:monotonicity formula}, the positive part of the discrepancy measure is still small.
\begin{lemma}\label{lemma9}
There exists $C>0$ such that for any $y \in \mathbb{R}^d$, $0<\varepsilon < \varepsilon_{3}$, $t \in [0, T_{\varepsilon}]$, $t<s$, we have
\begin{equation}\label{442}
\int_{0}^{t} \left[ \frac{1}{2(s-\tau)}\int_{\R^d} (\tilde \xi(x,\tau ))_{+} \rho_{(y,s)} (x, \tau)  \, \dd x\right] \, \dd \tau \le C \varepsilon^{\beta^{\prime}-\beta} |\log \varepsilon|.
\end{equation}
Moreover, for any $t_1 <t_2 <s$ with $t_1, t_2 \in [0, T_{\varepsilon}]$ we have
\begin{equation}\label{MF2}
\begin{split}
& \int_{\R^d} \rho_{(y,s)} (x,t_2) \,\dd \omega^\eps_{t_2}(x)
  + \int_{t_1} ^{t_2}  \frac1{2(s-t)} \int_{\R^d} \rho_{(y,s)}(x,t) \,\dd \xi^\eps_{t,-} (x) \dd t \\
  \leq & \,\int_{\R^d} \rho_{(y,s)} (x,t_1) \,\dd \omega^\eps_{t_1}(x)
 + C \varepsilon^{\beta^{\prime}-\beta} |\log \varepsilon|.
\end{split}
\end{equation}
\end{lemma}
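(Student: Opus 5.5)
The plan is to split the time integral in \eqref{442} into three regimes according to the distance $s-\tau$ and to the initial layer $\tau<2\eps^{2\beta'}$. In each regime I will use a different bound on $(\tilde\xi)_+$. The pointwise bound \eqref{discrepancy1} gives $(\tilde\xi)_+\le C\eps^{-\beta}$ everywhere. Lemma~\ref{lem4.4} gives the localized integral bound $\int_{B_r(y)}(\tilde\xi)_+\,\dd x\le C\eps^{\beta'-\beta}r^{d-1}$ for $r\ge\eps^{\beta'}$ and $\tau\in[2\eps^{2\beta'},T_\eps]$. For $\tau\le 2\eps^{2\beta'}$ I will also use the same pointwise bound.

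\emph{Near-singular regime} $s-\tau\le\eps^{2\beta'}$. Here I use only the pointwise bound together with $\int_{\R^d}\rho_{(y,s)}(x,\tau)\,\dd x=\sqrt{4\pi(s-\tau)}$. The contribution is then at most
\[
C\eps^{-\beta}\int_{s-\eps^{2\beta'}}^{s}(s-\tau)^{-1/2}\,\dd\tau\le C\eps^{\beta'-\beta},
\]
exactly as in the proof of Lemma~\ref{lemma43}.

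\emph{Initial layer} $\tau\in[0,2\eps^{2\beta'}]$ with $s-\tau\ge\eps^{2\beta'}$. The same pointwise bound gives
\[
C\eps^{-\beta}\int_{0}^{2\eps^{2\beta'}}(s-\tau)^{-1/2}\,\dd\tau\le C\eps^{-\beta}\,\eps^{2\beta'}\,\eps^{-\beta'}=C\eps^{\beta'-\beta}.
\]

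\emph{Main regime} $\tau\ge 2\eps^{2\beta'}$ and $s-\tau\ge\eps^{2\beta'}$. Set $r_0=\sqrt{s-\tau}\ge\eps^{\beta'}$ and decompose $\R^d$ into the ball $B_{r_0}(y)$ and the dyadic annuli $B_{2^{k+1}r_0}(y)\setminus B_{2^kr_0}(y)$. On the $k$-th annulus, $\rho\le C(s-\tau)^{-(d-1)/2}e^{-4^k/4}$. Lemma~\ref{lem4.4} bounds the mass of $(\tilde\xi)_+$ there by $C\eps^{\beta'-\beta}(2^{k+1}r_0)^{d-1}$. Summing over $k$ with the Gaussian weights gives
\[
\int_{\R^d}(\tilde\xi)_+\rho_{(y,s)}(x,\tau)\,\dd x\le C\eps^{\beta'-\beta},
\]
uniformly in $\tau$ in this range. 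This works because periodicity lets Lemma~\ref{lem4.4} apply to balls of any radius, with $r\ge\eps^{\beta'}$ holding at every scale used. The factor $\frac1{2(s-\tau)}$ then integrates over $s-\tau\in[\eps^{2\beta'},s]$ to at most $\log(s/\eps^{2\beta'})\le C|\log\eps|$. This needs $s$ to be bounded, say $s\le T+1$; for larger $s$ the tail $s-\tau\ge1$ can be handled using $\rho\le(4\pi)^{-(d-1)/2}$ and the global bound of Theorem~\ref{thm:positive part of disrepancy measure}, or simply by noting $\log(s/(s-t))$ with $t\le T$. This is the step I expect to be the main obstacle: one has to keep the Gaussian annulus sum uniform and make sure the logarithm is the only loss.

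\emph{Proof of \eqref{MF2}.} I will integrate the second inequality of Lemma~\ref{lemma:monotonicity formula} from $t_1$ to $t_2$. By the Remark, $\xi^\eps_{t,+}\le(\tilde\xi)_+$ pointwise, so the positive-part term is dominated by the left side of \eqref{442}, with $t_2$ in place of $t$. Applying \eqref{442} then gives \eqref{MF2}.
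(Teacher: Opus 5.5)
Your proposal is correct and follows the paper's proof. You use the same splitting of the time integral (near $s$, the initial layer $\tau\le 2\eps^{2\beta'}$, and the main regime), the pointwise bound \eqref{discrepancy1} on the two short pieces, Lemma~\ref{lem4.4} together with $\int\frac{\dd\tau}{s-\tau}$ for the $|\log\eps|$ loss in the main regime, and you obtain \eqref{MF2} by integrating Lemma~\ref{lemma:monotonicity formula} using $\xi^\eps_{t,+}\le(\tilde\xi)_+$.

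The differences are minor. You turn Lemma~\ref{lem4.4} into the Gaussian-weighted bound through dyadic annuli starting at radius $\sqrt{s-\tau}\ge\eps^{\beta'}$, whereas the paper uses the layer-cake formula for $\rho$ and treats $B_{\eps^{\beta'}}(y)$ separately with the pointwise bound. Also, of your two remedies for large $s$, only the $\log(s/(s-t))$ argument with $t\le T$ works: Theorem~\ref{thm:positive part of disrepancy measure} gives no rate, so it cannot produce $C\eps^{\beta'-\beta}|\log\eps|$.
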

\begin{proof}
If $t \le 2\varepsilon^{2\beta^{\prime}}$, then, by \eqref{discrepancy1} and $\int_{\R^d} \rho_{(y,s)} (x,t) \, \dd x = \sqrt{4\pi (s-\tau)}$, we have
\begin{equation}\label{443}
\begin{split}
& \int_{0}^{t} \left[ \frac{1}{2(s-\tau)}\int_{\R^d} (\tilde \xi(x,\tau))_{+} \rho_{(y,s)} (x, \tau)  \, \dd x\right] \, \dd \tau \\
\le &\, \int_{0}^{t} \frac{C\varepsilon^{-\beta} \sqrt{\pi} }{\sqrt{s-\tau}} \dd\tau 
\le \int_{0}^{t} \frac{C\varepsilon^{-\beta} \sqrt{\pi} }{\sqrt{t-\tau}} \dd\tau
\le C \varepsilon^{\beta^{\prime}-\beta}.
\end{split}
\end{equation}
If $s>t\geq s-2\varepsilon^{2\beta^{\prime}}$, then by a similar argument, we have
\[
\int_{s-2\varepsilon^{2\beta^{\prime}}}^{t} \left[ \frac{1}{2(s-\tau)}\int_{\R^d } (\tilde \xi(x,\tau))_{+} \rho_{(y,s)} (x, \tau) \, \dd x\right]\dd\tau 
\le C \varepsilon^{\beta^{\prime}-\beta}.
\]
So we need to estimate on $[2\varepsilon^{2\beta^{\prime}}, t]$ with $t\le s-2\varepsilon^{2\beta^{\prime}}$.
On $B_{\varepsilon^{\beta^{\prime}}}(y)$, by \eqref{discrepancy1} and $s-t \geq 2\varepsilon^{2\beta^{\prime}}$, we have
\begin{equation}\label{444}
\begin{split}
&\int_{2\varepsilon^{2\beta^{\prime}}}^{t}  \left[ \frac{1}{2(s-\tau)}\int_{B_{\varepsilon^{\beta^{\prime}}}(y)} (\tilde \xi(x,\tau) )_{+} \rho_{(y,s)} (x, \tau) \,  \dd x\right]\dd \tau \\
&\le \int_{2\varepsilon^{2\beta^{\prime}}}^{t} \frac{C \omega_{d}\varepsilon^{\beta^{\prime}d-\beta}}{(\sqrt{4\pi})^{d-1}(s-\tau)^{\frac{d+1}{2}}} \dd\tau \le C \varepsilon^{\beta^{\prime}-\beta} .
\end{split}
\end{equation}
On $\R^d \backslash B_{\varepsilon^{\beta^{\prime}}}(y) $, by \eqref{431} and $\int_0 ^1 (-\log l)^{\frac{d-1}{2}} dl= \pi ^{\frac{d-1}{2}} \omega_{d-1} ^{-1}$, we obtain
\begin{equation}\label{445}
\begin{split}
&\int_{2\varepsilon^{2\beta^{\prime}}}^{t}  
\left[ \frac{1}{2(s-\tau)}\int_{\R^d \backslash B_{\varepsilon^{\beta^{\prime}}}(y)} (\tilde \xi(x,\tau))_{+} \rho_{(y,s)} (x, \tau) \, \dd x\right]\dd\tau \\
\le& \int_{2\varepsilon^{2\beta^{\prime}}}^{t} \frac{1}{2(s-\tau)^{\frac{d+1}{2}}(\sqrt{4\pi})^{d-1}} \int_{0}^{1} \left\{ \int_{\{ x | e^{-\frac{|x-y|^{2}}{4(s-\tau)}} \geq l \} \backslash B_{\varepsilon^{\beta^{\prime}}}(y) } (\tilde \xi(x,\tau))_{+} \right\} \, \dd l\, \dd\tau \\
=& \int_{2\varepsilon^{2\beta^{\prime}}}^{t} \frac{1}{2(s-\tau)^{\frac{d+1}{2}}(\sqrt{4\pi})^{d-1}} \int_{0}^{1} \left\{ \int_{\{\varepsilon ^{\beta'} \leq |x-y|\leq \sqrt{4(s-\tau ) (-\log l)} \} } (\tilde \xi (x,\tau))_{+}\right\} \, \dd l\, \dd\tau \\
\leq & \int_{2\varepsilon^{2\beta^{\prime}}}^{t} \frac{1}{2(s-\tau)^{\frac{d+1}{2}}(\sqrt{4\pi})^{d-1}} \int_{0}^{1} 
C \varepsilon ^{\beta' -\beta} 
(\sqrt{4(s-\tau) (-\log l)})^{d-1} \,
\dd l \,\dd\tau \\
\le& C\varepsilon^{\beta^{\prime}-\beta} \int_{2\varepsilon^{2\beta^{\prime}}}^{t} \frac{(s-\tau)^{\frac{d-1}{2}}}{(s-\tau)^{\frac{d+1}{2}}} \, \dd \tau 
\le C\varepsilon^{\beta^{\prime}-\beta} \beta^{\prime}|\log \varepsilon|,
\end{split}
\end{equation}
where we used $s-t\geq 2\varepsilon ^{2\beta'}$ and $s\geq 4\varepsilon ^{2\beta'}$, since $2\varepsilon ^{2\beta'} \leq t \leq s- 2\varepsilon ^{2\beta'}$.
Combining \eqref{443}, \eqref{444}, and \eqref{445}, we obtain the estimate \eqref{442}.
\end{proof}
Finally, we can prove the upper bound~\eqref{upperbound of density} on the density.
\begin{theorem}\label{thm:density}
For any $T>0$, there exists $\eps_{0}$ such that
\begin{equation}\label{eq:upperbound}
\inf_{\varepsilon \in (0,\eps_0)} T_\varepsilon \geq T.
\end{equation}
Hence
\begin{equation}
\sup_{\varepsilon \in (0,\eps_0)}\sup\limits_{t \in[0, T]} D_\varepsilon (t) \leq D_1. 
\end{equation}
\end{theorem}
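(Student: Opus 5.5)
The plan is a continuity (bootstrap) argument in the style of Ilmanen and Takasao--Tonegawa. I fix $T>0$ and argue by contradiction: suppose $T_\eps<T$ for some small $\eps$. Since $t\mapsto \omega^\eps_t$ is continuous (for fixed $\eps$ the solution is smooth), $D_\eps$ is continuous in $t$. By the definition of $T_\eps$, this gives $D_\eps(T_\eps)=D_1$, while $D_\eps(t)\le D_1$ for all $t\in[0,T_\eps]$. It therefore suffices to show that, for $\eps<\eps_0$ with $\eps_0$ depending only on $T$, $D_0$, $E_0$, $d$, $n$, $\alpha$, we have $D_\eps(t)\le D_1-\tfrac12$ for every $t\in[0,T_\eps]$. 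This contradicts $D_\eps(T_\eps)=D_1$ and forces $T_\eps=T$.

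To bound the density at a time $t\in(0,T_\eps]$, I fix $x\in\R^d$ and $r\in(0,1)$ and set $s:=t+r^2$. For $z\in B_r(x)$ we have $\rho_{(x,s)}(z,t)\ge (4\pi r^2)^{-\frac{d-1}{2}}e^{-1/4}$, hence
\[
\omega^\eps_t(B_r(x))\le e^{\frac14}(4\pi)^{\frac{d-1}2}r^{d-1}\int_{\R^d}\rho_{(x,s)}(z,t)\,\dd\omega^\eps_t(z).
\]
The hypotheses of Lemma~\ref{lemma9} hold on $[0,T_\eps]$ with $t_1=0$, $t_2=t<s$. Applying \eqref{MF2} there and dropping the nonnegative $\xi_-$ term gives
\[
\int\rho_{(x,s)}(\cdot,t)\,\dd\omega^\eps_t\le\int\rho_{(x,s)}(\cdot,0)\,\dd\omega^\eps_0+C\eps^{\beta'-\beta}|\log\eps|.
\]
Lemma~\ref{lemma:ilmanen}\eqref{item:ilmanen1} bounds the first term by $D_0$, using the density bound of Definition~\ref{def:well-prepared}(v) on $\omega^\eps_0$. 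One point needs checking: that lemma uses the density bound for all $r>0$, whereas (v) only controls $r\le1$. Large radii are handled through periodicity and the energy bound $E_0$, which gives a bound of order $E_0 r^{d-1}$ at scale $r\ge1$. The constant $D_0$ should then be understood, or enlarged, accordingly, and the same remark applies to $D_1$ in Lemma~\ref{lem6}.

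Combining the two displays yields
\[
\frac{\omega^\eps_t(B_r(x))}{\omega_{d-1}r^{d-1}}\le\frac{e^{1/4}(4\pi)^{\frac{d-1}2}}{\omega_{d-1}}\big(D_0+C\eps^{\beta'-\beta}|\log\eps|\big).
\]
By the choice of $D_1$, the leading constant satisfies $e^{1/4}(4\pi)^{\frac{d-1}{2}}D_0/\omega_{d-1}\le\tfrac14(D_1-1)$, which leaves ample room. Since $\beta'>\beta$, I then choose $\eps_0\le\eps_3$ so small that the error term is below $\tfrac12$. This gives $D_\eps(t)\le D_1-\tfrac12$, as required, and the second claim of the theorem follows immediately.

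The main obstacle is making sure the constant $C$ in Lemma~\ref{lemma9} is uniform: it must depend only on $D_1$ and the data, and not on $\eps$, $s$, $y$ or $T_\eps$. It is also important that Lemma~\ref{lemma9} is only ever applied on $[0,T_\eps]$, where the density bound $D_1$ underlying Lemmas~\ref{lemma43} and \ref{lem4.4} holds; this is exactly what makes the bootstrap non-circular. A further subtlety is the case $s>T_\eps$, which arises when $t$ is close to $T_\eps$. This is harmless, since Lemma~\ref{lemma9} only requires $t_1,t_2\in[0,T_\eps]$ and places no restriction on $s$.
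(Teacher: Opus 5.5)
Your argument is correct and is essentially the paper's. Both evaluate the Gaussian density at $s=t+r^2$, use $\rho_{(x,s)}(\cdot,t)\ge e^{-1/4}(4\pi r^2)^{-\frac{d-1}{2}}$ on $B_r(x)$, and control the result via \eqref{MF2} from $t_1=0$ together with Lemma~\ref{lemma:ilmanen}\eqref{item:ilmanen1}. The only difference is presentation: the paper runs it as a contradiction at time $T_{\eps_i}$ with a ball of density $>D_1/2$, absorbing the error into $2D_0$, whereas you prove the direct bound $D_\eps\le D_1-\frac12$ on $[0,T_\eps]$.

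One correction to your side remark on large radii. For the periodic extension one only has $\omega^\eps_0(B_r(x))\lesssim E_0 r^{d}$ for $r\ge1$, not $E_0r^{d-1}$, so no scale-independent density bound exists. What saves the argument is that only Gaussian scales $s< T+1$ occur here. At those scales the contribution from $\{|z-x|\ge1\}$ is at most $C(d,T)E_0$, so replacing $D_0$ by $D_0+C(d,T)E_0$ suffices, with $D_1$ adjusted accordingly and now depending on $T$.
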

\begin{proof}
We will prove the statement by contradiction. Let $T>0$ and suppose that there exists a sequence $\{ \varepsilon_{i} \}_{i=1}^{\infty}$ such that $\varepsilon_{i} \to 0$ and $T_{\varepsilon_i} <T $ holds for all $i \in \mathbb{N}$.
Note that $1< D_{1}=D_{\varepsilon_i}(T_{\varepsilon_i})$.
From the definition of $D_\varepsilon (t)$, there exist $y_i \in \mathbb{R}^d$ and $r_i \in (0, 1)$ such that 
\[
\frac{\omega_{T_{\varepsilon_i}}^{\varepsilon_{i}}(B_{r_i}(y_i))}{\omega_{d-1}r_i^{d-1}} > \frac{D_{\varepsilon_i} (T_{\varepsilon_i})}{2}=\frac{D_1}{2}.
\]
By Lemma~\ref{lemma:ilmanen}\eqref{item:ilmanen1}, we have
\begin{equation*}
\int_{\R^d} \rho_{(y_i,s_i)} (x, 0) \, \dd \omega_{0}^{\varepsilon_{i}} \le D_{0}.
\end{equation*}
Set $s_i=T_{\varepsilon_i} + r_i^{2}$.
By \eqref{442}, for $t \in [0, T_{\varepsilon_i}]$ and sufficiently small $\varepsilon_{i}$ we have
\begin{equation*}
\int_{0}^{t} \left[ \frac{1}{2(s_i-\tau)}\int_{\R^d} (\xi_{\tau}^{\varepsilon_{i}})_{+} \rho_{(y_i,s_i)} (x, \tau) \, \dd x\right]\dd\tau \le C \varepsilon_{i}^{\beta^{\prime}-\beta} |\log \varepsilon_{i}|.
\end{equation*}
We compute 
\begin{equation*}
\int_{\R^d} \rho_{(y_i,s_i)} (x,T_{\varepsilon_i}) \, \dd \omega_{T_{\varepsilon_i}} ^{\varepsilon_i} \geq \int_{B_{r_i}(y_i)} \rho_{(y_i,s_i)} (x,T_{\varepsilon_i}) \, \dd \omega_{T_{\varepsilon_i}}^{\varepsilon_{i}} \geq \frac{e^{-\frac{1}{4}}}{(4\pi)^{\frac{d-1}{2}}r_i^{d-1}} \omega_{T_{\varepsilon_i}}^{\varepsilon_{i}} (B_{r}(y)) >
\frac{e^{-\frac{1}{4}}\omega_{d-1}}{2(4\pi)^{\frac{d-1}{2}}}D_{1}.
\end{equation*}
By the monotonicity formula \eqref{MF2}, we have
\begin{equation*}
\begin{split}
 \frac{e^{-\frac{1}{4}}\omega_{d-1}}{2(4\pi)^{\frac{d-1}{2}}}D_{1}
&<
\int_{\R^d} \rho_{(y_i,s_i)} (x,T_{\varepsilon_i}) \,\dd \omega^{\eps_i} _{T_{\varepsilon_i}}(x) \\
  &\leq \int_{\R^d} \rho_{(y,s)} (x,0) \,\dd \omega^{\eps_i} _{0}(x)
 + C \varepsilon_i ^{\beta^{\prime}-\beta} |\log \varepsilon_i | \\
&  \leq  D_0
 + C \varepsilon_i ^{\beta^{\prime}-\beta} |\log \varepsilon_i | \leq 2D_0
\end{split}
\end{equation*}
for sufficient small $\varepsilon_i$.
Then we have
\[
\frac{e^{-\frac{1}{4}}\omega_{d-1}}{4(4\pi)^{\frac{d-1}{2}}}D_{1} \le D_{0},
\]
which is a contradiction. Hence we obtain \eqref{eq:upperbound}.
\end{proof}
By Lemma~\ref{lemma9} and Theorem~\ref{thm:density} we obtain the following energy-decay estimate from the monotonicity formula in Lemma~\ref{lemma:monotonicity formula}.
\begin{corollary}\label{cor:discrepancy decay}
There exists $C>0$ such that for any $y \in \mathbb{R}^d$, $0<\varepsilon < \varepsilon_{4}$, $t \in [0, T]$, $t<s$, we have
\begin{equation}\label{442_a}
\int_{0}^{t} \left[ \frac{1}{2(s-\tau)}\int_{\R^d} (\tilde \xi(x,\tau ))_{+} \rho_{(y,s)} (x, \tau)  \, \dd x\right] \, \dd \tau \le C \varepsilon^{\beta^{\prime}-\beta} |\log \varepsilon|.
\end{equation}
Moreover, for any $t_1 <t_2 <s$ with $t_1, t_2 \in [0, T]$ we have
\begin{equation}\label{MF2_a}
\begin{split}
& \int_{\R^d} \rho_{(y,s)} (x,t_2) \,\dd \omega^\eps_{t_2}(x)
  + \int_{t_1} ^{t_2}  \frac1{2(s-t)} \int_{\R^d} \rho_{(y,s)}(x,t) \,\dd \xi^\eps_{t,-} (x) \dd t \\
  \leq & \,\int_{\R^d} \rho_{(y,s)} (x,t_1) \,\dd \omega^\eps_{t_1}(x)
 + C \varepsilon^{\beta^{\prime}-\beta} |\log \varepsilon|.
\end{split}
\end{equation}
\end{corollary}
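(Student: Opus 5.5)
The corollary is Lemma~\ref{lemma9} with $T_\varepsilon$ replaced by $T$. The plan is to let Theorem~\ref{thm:density} remove the restriction $t\in[0,T_\varepsilon]$, and then re-derive \eqref{MF2_a} from the monotonicity formula with the error controlled by \eqref{442_a}. First, I apply Theorem~\ref{thm:density} with the given $T$ to get $\eps_0$ such that $T_\varepsilon\ge T$ for all $\varepsilon\in(0,\eps_0)$. Then I set $\varepsilon_4:=\min\{\varepsilon_3,\eps_0\}$. For $\varepsilon<\varepsilon_4$, every $t\in[0,T]$ lies in $[0,T_\varepsilon]$. The hypotheses of Lemma~\ref{lemma9} therefore hold verbatim, and \eqref{442} gives \eqref{442_a} with the same constant $C$. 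The constants in Lemma~\ref{lemma9} depend only on $D_1$, $d$, $\alpha$, $n$, $E_0$ and the constants of the pointwise discrepancy bound \eqref{discrepancy1} and the gradient bound \eqref{gradient-est}, none of which depend on $T_\varepsilon$. So $C$ is uniform.

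For \eqref{MF2_a}, I integrate the second inequality of Lemma~\ref{lemma:monotonicity formula} in time over $[t_1,t_2]$, with $t_1<t_2<s$ and $t_1,t_2\in[0,T]$. This gives
\[
\int_{\R^d}\rho_{(y,s)}(x,t_2)\,\dd\omega^\eps_{t_2}+\int_{t_1}^{t_2}\frac{1}{2(s-t)}\int_{\R^d}\rho_{(y,s)}\,\dd\xi^\eps_{t,-}\,\dd t
\le\int_{\R^d}\rho_{(y,s)}(x,t_1)\,\dd\omega^\eps_{t_1}+\int_{t_1}^{t_2}\frac{1}{2(s-t)}\int_{\R^d}\rho_{(y,s)}\,\dd\xi^\eps_{t,+}\,\dd t .
\]
The map $t\mapsto\int\rho\,\dd\omega^\eps_t$ is absolutely continuous on $[t_1,t_2]$ because $s-t\ge s-t_2>0$ there, so the kernel is smooth.

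To bound the last term, I use the pointwise inequality $(\frac\eps2|\nabla\uu|^2-F_\eps(\uu))_+\le(\tilde\xi)_+$ from the Remark in Section~\ref{sec:discrepancy upper bound}, where the dropped penalty term is nonpositive. I then enlarge the time interval from $[t_1,t_2]$ to $[0,t_2]$, since the integrand is nonnegative. Applying \eqref{442_a} with $t=t_2$ bounds it by $C\varepsilon^{\beta'-\beta}|\log\varepsilon|$.

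The argument has no real obstacle. The one point to check is that \eqref{442_a} is applied for a fixed $s>t_2$ but otherwise arbitrary $s$, possibly larger than $T$. Lemma~\ref{lemma9} only restricts $t$ and not $s$, so this is fine.
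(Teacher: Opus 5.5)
Your proposal is correct and follows the same route the paper indicates. Theorem~\ref{thm:density} gives $T_\varepsilon = T$ for small $\varepsilon$, so Lemma~\ref{lemma9} holds on all of $[0,T]$ with $\varepsilon_4=\min\{\varepsilon_3,\varepsilon_0\}$, and \eqref{MF2_a} follows by integrating Lemma~\ref{lemma:monotonicity formula} in time, using $\xi^\varepsilon_{t,+}\le(\tilde\xi)_+$ and \eqref{442_a}; you simply write out the details the paper compresses into a one-line remark.
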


\begin{remark}
The inequality \eqref{442_a} cannot be derived directly from Theorem \ref{thm:positive part of disrepancy measure}, because it holds uniformly even as $s\downarrow t$.
\end{remark}

\bigskip

We now show that $\uu_\eps$ cannot be close to the wells around a point where the energy concentrates.
This is again in stark contrast to the case of the Ginzburg--Landau equation.

\begin{lemma}[Distance to wells]\label{lemma:distance to wells}
 For any $(x,t)\in \supp \omega$ with $t>0$ there exist an index $ i \in \{ 1,\dots, n \}$ and sequences $\eps_k\to0$ and $(x_k,t_k)\to (x,t)$ such that $\frac13 < u_{i,\varepsilon _k} (x_k,t_k) <\frac23$.
\end{lemma}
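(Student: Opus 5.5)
We argue by contradiction, following the strategy of \cite{TakasaoTonegawa_transport}. Here $\supp\omega$ refers to the space-time support of the limit measure $\dd\omega_t\,\dd t$, so that $(x,t)\in\supp\omega$ with $t>0$ means $\int_{t-r^2}^{t+r^2}\omega_\tau(B_r(x))\,\dd\tau>0$ for every $r>0$. Suppose the conclusion fails. Then there exist $r>0$ (with $r^2<t/2$) and $\eps_0>0$ such that for all $\eps<\eps_0$, all $i\in\{1,\dots,n\}$, and all $(y,\tau)\in B_{2r}(x)\times[t-4r^2,t+4r^2]$, we have $u_{i,\eps}(y,\tau)\in[0,\frac13]\cup[\frac23,1]$. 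By continuity of $u_{i,\eps}$ in $(y,\tau)$, each $u_{i,\eps}$ stays in one of the two intervals throughout this cylinder (possibly after shrinking $r$ to have a connected cylinder). The goal is to show that $\int_{t-r^2}^{t+r^2}\omega^\eps_\tau(B_r(x))\,\dd\tau\to0$, which contradicts $(x,t)\in\supp\omega$.

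\textbf{Gradient decay.} We redo the localized Bochner-type estimate \eqref{436} from the proof of Lemma~\ref{lem4.4}, now on the cylinder $B_{2r}(x)\times[t-4r^2,t+4r^2]$ with a fixed cutoff $\varphi$ (so $|\nabla\varphi|\le C/r$) instead of the $\eps^{\beta'}$-scale one. Since $W''\ge c>0$ on $[0,\frac13]\cup[\frac23,1]$, and the coupling terms are $O(\eps^{-1-\alpha})\ll\eps^{-2}$, we obtain
\[
\frac{\dd}{\dd t}\int\varphi^2|\nabla\uu|^2\le -\frac{c}{\eps^2}\int\varphi^2|\nabla\uu|^2+\frac{C}{r^2}\int_{B_{2r}}|\nabla\uu|^2.
\]
The density bound from Theorem~\ref{thm:density} gives $\int_{B_{2r}}\eps|\nabla\uu|^2\le Cr^{d-1}$. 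Integrating the differential inequality over a time interval of length $r^2$ then yields
\[
\int_{B_r}\eps|\nabla\uu|^2\le C\big(e^{-cr^2/\eps^2}+\eps^2r^{-2}\big)r^{d-1}\to0
\]
uniformly for $\tau\in[t-r^2,t+r^2]$.

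\textbf{Potential decay.} It remains to control $\frac1\eps W(u_i)$ and the penalty term. For the $W$-part, we set $v=u_i$ or $1-u_i$, so that $v\le\frac13$. Multiplying the equation for $u_i$ by $\varphi^2 v$ (or using a comparison or energy argument) gives
\[
\frac1{\eps^2}\int\varphi^2 v^2\lesssim \int|\nabla\varphi|^2v^2+\text{time-derivative terms},
\]
again using $W'(v)v\ge cv^2$ near the wells and that the coupling term is of lower order. Combined with $W(v)\le Cv^2$, this gives $\int_{B_r}\frac1\eps W(u_i)\to0$. For the penalty term, note that each $u_i$ is close to $0$ or $1$ in the cylinder. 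The $L^1$ limit satisfies $\sum_i u_i=1$ (Proposition~\ref{prop:compactness of u}), so exactly one component is near $1$ in most of the ball. Consequently $\sigma-\sum_j\phi(u_j)$ is controlled by $\sum_j\min(\phi(u_j),\sigma-\phi(u_j))\lesssim\sum_j W(u_j)$. The penalty density $\eps^{-\alpha}(\cdot)^2$ is then bounded by $C\eps^{-\alpha}\sum_jW(u_j)\le C\eps^{1-\alpha}\cdot\frac1\eps\sum_jW(u_j)$, which vanishes after integration. The one exception is the bad case in which no component is near $1$ (or two are). There the penalty equals a fixed positive constant times $\eps^{-\alpha}$, and the energy bound $E_0$ forces that set to have measure $O(\eps^\alpha)$. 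By continuity, the configuration in the cylinder is constant, so the bad case either covers the whole cylinder or is excluded.

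\textbf{Main obstacle.} The main obstacle is the potential part: making the exponential decay of $W(u_i)$ and of the penalty rigorous under the coupling, in particular handling sign-definiteness of the reaction term $-W'(u_i)+\eps^{1-\alpha}\sqrt{2W}g$ near the wells. We would handle it by treating the coupling as a perturbation of relative size $\eps^{1-\alpha}$ in the rescaled equation \eqref{rescaledeq}.
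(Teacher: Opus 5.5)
\textbf{The gradient-decay step fails as written.} It needs $W''\geq c>0$ on $[0,\frac13]\cup[\frac23,1]$. But for $W(u)=u^2(1-u)^2$ one has $W''(u)=2-12u+12u^2$, so $W''(\frac13)=W''(\frac23)=-\frac23$. In fact $W''<0$ on all of $(\frac12-\frac{\sqrt3}{6},\frac12+\frac{\sqrt3}{6})\approx(0.21,0.79)$. Knowing only that each $u_{i,\eps}$ stays in $[0,\frac13]$ or in $[\frac23,1]$ on the cylinder, you do not get the absorbing term $-\frac{c}{\eps^2}\int\varphi^2|\nabla\uu|^2$. The claimed bound $\int_{B_r}\eps|\nabla\uu|^2\le C(e^{-cr^2/\eps^2}+\eps^2r^{-2})r^{d-1}$ is therefore unjustified. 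Importing \eqref{436} does not rescue this, because the convexity claim used there (with thresholds $\frac14,\frac34$) has the same defect: $W''(\frac14)=-\frac14$.

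Conversely, the step you flag as the main obstacle is the easy one. For $0\le v\le\frac13$ one has $vW'(v)\ge\frac49v^2$ and $v\sqrt{2W(v)}\le\sqrt2\,v^2$. So the coupling is a perturbation of relative size $\eps^{1-\alpha}$, and your potential-decay argument is exactly the paper's: test the $i$-th equation with $\varphi^2u_i$ and integrate in time.

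\textbf{How to close the gap.} The paper needs no Bochner-type estimate here. Once $\int\!\!\int\varphi^2\frac1\eps W(u_i)\to0$, it uses the pointwise inequality $\frac\eps2|\nabla u_i|^2\le\big(\frac\eps2|\nabla u_i|^2-\frac1\eps W(u_i)\big)_++\frac1\eps W(u_i)$. It then uses the vanishing of the space-time integral of the positive part, which is Chen's estimate as in the proof of Theorem~\ref{thm:positive part of disrepancy measure}. Doing your potential step first and then this closes the gap.

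If you prefer to keep your route (it would give decay pointwise in time and avoid Chen's estimate), you must first push $u_i$ into the convexity region of $W$. By the bounds above, $v$ satisfies $\partial_t v\le\Delta v-\frac{c}{\eps^2}v$ on the cylinder. Comparison with a standard exponential barrier then gives $v\le Ce^{-c'r/\eps}$ on an inner cylinder, where $W''\geq c$ and your estimate runs.

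\textbf{Penalty term.} Your handling of it is correct and supplies a step the paper's proof leaves implicit: concluding $(x,t)\notin\supp\omega$ also requires $\frac1{2\eps^\alpha}(\sigma-\sum_j\phi(u_j))^2$ to vanish locally. The only slip is the claimed pointwise lower bound $\gtrsim\eps^{-\alpha}$ when no component is near $1$. This fails for $n\ge4$, because $n\phi(\frac13)=\frac{7n}{27}\sigma$ can exceed $\sigma$. However, Proposition~\ref{prop:compactness of u} already excludes that configuration, so nothing is lost.
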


The proof closely follows the one in~\cite{MizunoTonegawa}.
\begin{proof}
  Assume that there exists $(x_0,t_0)\in \supp \omega$ with $t_0>0$, and $r_0>0$ such that for all $i\in\{1,\ldots,n\}$, there exists $\eps_0>0$ such that for all sufficiently large $k$, we have
  \[
    u_i^{\eps_k}(x,t) \notin \Big(\frac13,\frac23\Big) \quad \text{for all } (x,t)\in B_{r_0}(x)\times (t_0-r_0^2,t_0+r_0^2).
  \]
  We want to conclude from this that $(x_0,t_0)\notin \supp \omega$.

  Fix $i\in\{1,\ldots,n\}$ and $\eps<\eps_0$. 
  By continuity, for every such $k$, either $u_i^{\eps_k} \leq \frac13$ in $B_{r_0}(x)\times (t-r_0^2,t+r_0^2)$ or $u_i^{\eps_k} \geq \frac23$ in $B_{r_0}(x_0)\times (t_0-r_0^2,t_0+r_0^2)$. Passing to a subsequence, we may without loss of generality assume that $u_i^{\eps} \leq \frac13$ in $ B_{r_0}(x_0)\times (t_0-r_0^2,t_0+r_0^2)$. The other case is analogous.

  For notational convenience, we omit the index $k$ on $\eps$ and write $u_i $ for $u_i^\eps$. 
  Let $\varphi \in C_c^\infty(B_{r_0}(x_0))$ such that $|\nabla \varphi|\leq \frac 3{r_0}$ and $\varphi=1$ on $B_{r_0/2}(x_0)$. 
  Testing the $i$-th component of the Allen--Cahn equation~\eqref{eq:allen cahn} with $\varphi^2 u_i^{\eps}$, denoting $g=\sigma-\sum_{i} \phi(u_i)$, we obtain
  \begin{align*}
    \frac{\dd}{\dd t} \int_{\torus}\varphi^2 \frac\eps2 u_i^2 \,\dd x 
    &= 
    \int_{\torus} \Big(  \eps \varphi^2  u_i \Delta u_i -  \varphi^2 u_i \partial_{u_i} F_\eps(\uu_\eps)  \Big)\,\dd x
    \\&
    \int_{\torus} \Big( - \eps \varphi^2  |\nabla u_i|^2 - 2 \eps \varphi u_i \nabla \varphi \cdot \nabla u_i -  \varphi^2 u_i \frac1\eps W'(u_i) + \varphi^2 u_i g \frac1{\eps^\alpha } \sqrt{2W(u_i)}   \Big)\,\dd x.
  \end{align*}
  By Cauchy--Schwarz, $2|\varphi u_i \nabla \varphi \cdot \nabla u_i| \leq \varphi^2 |\nabla u_i|^2 + u_i^2 |\nabla \varphi|^2$. Moreover, since $u_i \leq \frac13$, there exists a constant $C<\infty$ such that $u_i W'(u_i) \geq \frac1C W(u_i)$ and $u_i\sqrt{2W(u_i)} \leq C W(u_i)$. 
  Hence
   \begin{align*}
    \frac{\dd}{\dd t} \int_{\torus}\varphi^2 \frac\eps2 u_i^2 \,\dd x 
    &\leq  
    \int_{\torus} \eps u_i^2 |\nabla \varphi|^2 \, \dd x -  \big(\frac1C -C \|g\|_\infty \eps^{1-\alpha}\big) \int_{\torus} \varphi^2 \frac1\eps W(u_i) \,\dd x.
   \end{align*}
   Integrating in time and reordering yields
   \begin{align*}
    &\big(\frac1C -C \|g\|_\infty \eps^{1-\alpha}\big) \int_{t_0-r_0^2}^{t_0+r_0^2}\int_{\torus} \varphi^2 \frac1\eps W(u_i) \,\dd x \, \dd t
    \\&\leq 
    -\int_{\torus}\varphi^2 \frac\eps2 u_i^2 \,\dd x\Big|_{t=t_0-r_0^2}^{t=t_0+r_0^2}
    +\int_{t_0-r_0^2}^{t_0+r_0^2}\int_{\torus} \eps u_i^2 |\nabla \varphi|^2 \, \dd x \, \dd t.
   \end{align*}
   As the right-hand side vanishes in the limit $\eps\to0$ due to the uniform bounds~\eqref{gradient-est}, we obtain
   \begin{align}\label{eq:pf_Wto0}
       \lim_{\eps\to0}\int_{t_0-r_0^2}^{t_0+r_0^2}\int_{\torus} \varphi^2 \frac1\eps W(u_i) \,\dd x \, \dd t =0.
   \end{align}
   
   Moreover,
   \begin{align*}
    \int_{t_0-r_0^2}^{t_0+r_0^2}\int_{\torus} \varphi^2 \frac\eps2 |\nabla u_i|^2 \,\dd x \, \dd t
    &\leq
    \int_{t_0-r_0^2}^{t_0+r_0^2}\int_{\torus} \varphi^2 (\xi_\eps)_+ \,\dd x \, \dd t
    + \int_{t_0-r_0^2}^{t_0+r_0^2}\int_{\torus} \varphi^2 \frac1\eps W(u_i)\,\dd x \, \dd t.
   \end{align*}
   By Theorem~\ref{thm:chen}, the first right-hand side term vanishes in the limit $\eps\to0$ and hence, by~\eqref{eq:pf_Wto0}, we get
   \begin{align}\label{eq:pf_Duto0}
       \lim_{\eps\to0}\int_{t_0-r_0^2}^{t_0+r_0^2}\int_{\torus} \varphi^2 \frac\eps2 |\nabla u_i|^2 \,\dd x \, \dd t =0.
   \end{align}

   Hence, by~\eqref{eq:pf_Wto0} and~\eqref{eq:pf_Duto0}, we have $(x_0,t_0)\notin \supp \omega$, which is precisely what we wanted to prove.
\end{proof}

Now the following lemmas follow with proofs adapted from~\cite{MizunoTonegawa}, as they mostly rely on the monotonicity formula and Lemma~\ref{lemma:distance to wells}.

\begin{lemma}[Clearing out lemma]\label{lemma:clearing out}
  There exist $\gamma_0,\delta_0>0$ depending only on $T>0$ and $W$ such that the following holds: 
  If
  \begin{align}
    \int_{\R^d} \rho_{(y,s)}(x,t) \,\dd \omega_s(y) <\delta_0
  \end{align}
  for some $x\in \R^d$ and some $0\leq t<s<T/2$, then $(x',t')\notin \supp \omega$ for all $x'\in B_{\gamma_0 r} (x)$, where $t':=2s-t$ and $r:=\sqrt{2(s-t)}$.
\end{lemma}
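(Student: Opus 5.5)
The plan is to argue by contradiction along the lines of the clearing-out lemma in~\cite{MizunoTonegawa} (going back to~\cite{Ilmanen_allencahn}). The hypothesis is read as smallness of the Gaussian density of the limit measure at time $s$, centered at $(x,t')$: $\int\rho_{(x,t')}(y,s)\,\dd\omega_s(y)<\delta_0$, with $t'-s=s-t=r^2/2$. Suppose some $(x',t')$ with $|x'-x|<\gamma_0 r$ lies in $\supp\omega$.

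\emph{Step 1 (transfer to $\eps$ and to shifted centers).} Because $\rho_{(x,t')}(\cdot,s)$ is continuous with Gaussian decay, and the densities are uniformly bounded by Theorem~\ref{thm:density}, the weak-$\ast$ convergence $\omega^{\eps}_s\to\omega_s$ gives $\int\rho_{(x,t')}(\cdot,s)\,\dd\omega^\eps_s<\delta_0$ for small $\eps$; the tails are handled with Lemma~\ref{lemma:ilmanen}\eqref{item:ilmanen2}. By Lemma~\ref{lemma:distance to wells} there are $\eps_k\to0$, an index $i$, and points $(x_k,t_k)\to(x',t')$ with $\frac13<u_{i,\eps_k}(x_k,t_k)<\frac23$. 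Then $|x_k-x|\le 2\gamma_0 r\le \gamma(\delta)(t_k+\eps_k^2-s)^{1/2}$ once $\gamma_0$ is chosen small. Lemma~\ref{lemma:ilmanen}\eqref{item:ilmanen3} with this $\delta$ therefore yields
\[
\int\rho_{(x_k,t_k+\eps_k^2)}(\cdot,s)\,\dd\omega^{\eps_k}_s\le(1+\delta)\int\rho_{(x,t_k+\eps_k^2)}(\cdot,s)\,\dd\omega^{\eps_k}_s+\delta D_1 .
\]
Since $t_k+\eps_k^2\to t'$ and the kernel depends continuously on the time parameter, the right-hand side is at most $(1+\delta)\delta_0+\delta D_1+o(1)$.

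\emph{Step 2 (monotonicity forward to time $t_k$).} Apply Corollary~\ref{cor:discrepancy decay}, i.e.\ \eqref{MF2_a}, with center $(x_k,t_k+\eps_k^2)$ between the times $s$ and $t_k$. This gives
\[
\int\rho_{(x_k,t_k+\eps_k^2)}(\cdot,t_k)\,\dd\omega^{\eps_k}_{t_k}\le(1+\delta)\delta_0+\delta D_1+C\eps_k^{\beta'-\beta}|\log\eps_k|+o(1).
\]

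\emph{Step 3 (lower bound at the transition point).} As in the proof of Lemma~\ref{lemma43}, estimate \eqref{422} shows that the left-hand side is at least $5a$, where $a>0$ depends only on $W$ and $d$. This uses the gradient bound \eqref{gradient-est}, which keeps $u_i$ away from $\{0,1\}$ on an $\eps$-ball around $x_k$ and thereby gives a lower bound on $W(u_i)/\eps$ there. Choosing $\delta$ and $\delta_0$ so that $(1+\delta)\delta_0+\delta D_1<a$, and then letting $k\to\infty$, produces a contradiction. In this way $\delta_0,\gamma_0$ depend only on $W$, $d$, and, through $D_1$ and $\eps_0$, on $T$; the restriction $s<T/2$ guarantees $t'<T$, so that Theorem~\ref{thm:density} applies.

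The main obstacle is Step 1: controlling the center shift and the time perturbation $\eps_k^2$ uniformly. Lemma~\ref{lemma:ilmanen}\eqref{item:ilmanen3} requires the shift to be small relative to $\sqrt{t'-s}\sim r$, and this is exactly what fixes $\gamma_0$. A further subtlety is that the weak convergence must be applied to a test function with unbounded support on $\R^d$; here one uses the periodic extension and the uniform density bound to cut off the tails.
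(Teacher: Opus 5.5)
Your proof is correct and uses the same ingredients as the paper: Lemma~\ref{lemma:distance to wells}, the lower bound \eqref{422}, the monotonicity estimate \eqref{MF2_a} between times $s$ and $t_k$, and Lemma~\ref{lemma:ilmanen}\eqref{item:ilmanen3}. The only difference is the order. The paper first lets $k\to\infty$ to obtain $c_0\le\int\rho_{(x',t')}(y,s)\,\dd\omega_s(y)$ and then shifts the center from $x'$ to $x$ for the limit measure $\omega_s$; you instead transfer the hypothesis down to $\omega^{\eps_k}_s$ and shift centers at the $\eps$-level, which only adds the harmless continuity step $t_k+\eps_k^2\to t'$.
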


\begin{proof}
Assume $(x' , t') \in \supp \mu$ for a conrtadiction. By this assumption, Lemma \ref{lemma:distance to wells} implies that there exist a subsequence $\varepsilon _k \to 0$, $i \in \{1,\dots,n\}$, and a sequence $\{ (x_k,t_k) \}_{k=1} ^\infty$ with 
$(x_k ,t_k) \to (x', t')$ as $k \to \infty$ and 
\begin{equation}\label{col-1}
\frac14 < u_{i,\varepsilon_k} (x_k ,t_k) < \frac34 \qquad \text{for any} \ k\geq 1. 
\end{equation}
Set $T_k := t_k + \varepsilon _k ^2$. Similar to that for the calculation of \eqref{422}, by \eqref{col-1} there exists a constant $c_0 >0$ such that for any $k\geq1$
\begin{equation*}
c_0 \leq \int_{\R^d} \rho _{(x_k , T_k)} (y,t_k) \, \dd \mu_{t_k} ^{\varepsilon _k} (y). 
\end{equation*}
By this and \eqref{MF2_a}, 
\begin{equation*}
\begin{split}
 c_0 
&\leq \int_{\R^d} \rho_{(x_k,T_k)} (y, t_k) \,\dd \omega^{\eps_k}_{t_k}(y)
\\&\leq \int_{\R^d} \rho_{(x_k ,T_k)} (y,s) \,\dd \omega^{\eps_k}_{s}(y)
 + C \varepsilon_k ^{\beta^{\prime}-\beta} |\log \varepsilon_k |
\end{split}
\end{equation*}
for any $k \geq 1$. Letting $k\to \infty$, we have
\begin{equation}\label{col-2}
\begin{split}
c_0 \leq \,\int_{\R^d} \rho_{(x' , t' )} (y,s) \,\dd \omega_{s}(y).
\end{split}
\end{equation}
By $r=\sqrt{2(s-t)}=\sqrt{2(t' -s)}$, \eqref{col-2}, and Lemma~\ref{lemma:ilmanen}\eqref{item:ilmanen3}, 
for any $\delta>0$, there exists $\gamma (\delta)>0$ such that for any $x \in B_{\gamma (\delta) r} (x')$,
\begin{equation*}
\begin{split}
c_0 \leq & \,\int_{\R^d} \rho_{(x' , t' )} (y,s) \,\dd \omega_{s}(y) \\
\leq & \, (1+\delta) \int_{\R^d} \rho_{(x , t' )} (y,s) \,\dd \omega_{s}(y) + \delta D(s), 
\end{split}
\end{equation*}
where $D(s):= \sup_{z \in \R^d, R >0} \frac{\omega_s (B_R(z))}{\omega_{d-1} R^{d-1}}$. By \eqref{upperbound of density}, $\sup_{s \in [0,T]} D(s) \leq D_0$ for some $D_0>0$.
Thus, we may choose $\delta>0$ such that
\begin{equation*}
\begin{split}
\frac{c_0}{2} \leq \int_{\R^d} \rho_{(x , t' )} (y,s) \,\dd \omega_{s}(y),
\end{split}
\end{equation*}
and the claim holds with $\delta_0 :=\frac{c_0}{2}$ and $\gamma_0:=\gamma (\delta_0)$.
\end{proof}

Since the following lemma relies only on Lemma \ref{lemma:clearing out} and the upper bound of the density \eqref{upperbound of density}, its proof is almost identical to that of \cite{Ilmanen_allencahn,MizunoTonegawa,TakasaoTonegawa_transport} and is therefore omitted.

\begin{lemma}[Lower bounds on forward Gaussian density]\label{Lower bounds on forward Gaussian density}
  For $T>0$ let $\delta_0(T)>0$ be the constant from Lemma~\ref{lemma:clearing out} and let
  \[
    Z^-(T) :=
    \Big\{
      (x,t)\in \supp \omega 
      \,\Big|\,
      0\leq t <\frac{T}{2} \text{ and }
      \limsup_{s\downarrow t} \int_{\mathbb{T}^d} \rho_{(y,s)}(x,t)\,\dd \omega_s(y) <\delta_0(T)
    \Big\}
  \]
  satisfies $\omega(Z^-(T)) =0$.
\end{lemma}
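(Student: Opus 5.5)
We follow Ilmanen's covering argument (see also~\cite{MizunoTonegawa,TakasaoTonegawa_transport}). Its only inputs are the clearing out Lemma~\ref{lemma:clearing out} and the uniform density bound~\eqref{upperbound of density}, which passes to the limit and gives $\omega_t(B_R(z))\le D_1\omega_{d-1}R^{d-1}$ for all $t\in[0,T]$. We read $\omega$ as the space-time measure $\dd\omega_t\,\dd t$ on $\torus\times[0,T)$. Fix a compact interval $[\tau_1,\tau_2]\subset[0,T/2)$ and a ball $B$. It suffices to show $\omega(Z^-(T)\cap (B\times[\tau_1,\tau_2]))=0$.

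\emph{Step 1: choosing a good scale at each point.} Fix $\lambda>0$ small and let $(x,t)\in Z^-(T)$. By definition, $\int\rho_{(y,s)}(x,t)\,\dd\omega_s(y)<\delta_0$ for all $s\in(t,t+\lambda^2)$ after possibly shrinking $\lambda$ depending on the point. Set $Z_\lambda$ to be the points for which this holds with the given $\lambda$. Then $Z^-(T)=\bigcup_\lambda Z_\lambda$ increases as $\lambda\downarrow 0$, so it suffices to show $\omega(Z_\lambda)=0$ for each fixed $\lambda$. For $(x,t)\in Z_\lambda$ and every $s\in(t,t+\lambda^2)$, Lemma~\ref{lemma:clearing out} gives that the space-time cylinder $P_{(x,t)}(r):=B_{\gamma_0 r}(x)\times[t+r^2/2, t+r^2]$ with $r=\sqrt{2(s-t)}$ misses $\supp\omega$ when the time window is parametrized by $t'=2s-t$. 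Hence a "forward parabolic cone" above $(x,t)$ of aperture $\gamma_0$ and height $\sim\lambda^2$ is free of $\supp\omega$.

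\emph{Step 2: covering and measure estimate.} We show $\omega(Z_\lambda\cap(B\times[\tau_1,\tau_2]))\le C\lambda'$ for arbitrarily small $\lambda'$. Take $r<\lambda$ and slice time into intervals of length $\theta r^2$, with $\theta$ small relative to $\gamma_0$. If two points of $Z_\lambda$ lie in the same time slab, they are separated in space by at least $c\gamma_0 r$, since otherwise one sits inside the other's empty cone. There is one subtlety: the cone starts only at time $t+r^2/2$. We handle this by using all scales $r'\in(0,\lambda)$, which makes the full cone $\{(x',t'): |x'-x|<\gamma_0\sqrt{t'-t},\ t<t'<t+\lambda^2\}$ empty. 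Points $(x_1,t_1),(x_2,t_2)\in Z_\lambda\cap \supp\omega$ with $t_1<t_2<t_1+\lambda^2$ therefore satisfy $|x_1-x_2|\ge\gamma_0\sqrt{t_2-t_1}$. Hence $Z_\lambda$ is a "parabolic Lipschitz graph": its projection to $\torus$ is injective on time windows of length $\lambda^2$, and the slab $Z_\lambda\cap\{t\in I\}$ with $|I|=\theta r^2$ can be covered by $N\lesssim$ (number of $r$-balls needed to cover its spatial projection) cylinders $B_r(x_j)\times I$. Since the projections from different slabs are disjoint up to $O(r)$ neighborhoods, the total count over all slabs of one window is at most $C r^{-d}$. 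Each cylinder has $\omega$-measure at most $D_1\omega_{d-1}r^{d-1}\theta r^2$. Summing gives $\omega(Z_\lambda\cap\text{window})\le C r^{-d}r^{d+1}=Cr\to0$ as $r\to0$. Summing over the finitely many windows of length $\lambda^2$ in $[\tau_1,\tau_2]$ gives the claim.

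\emph{Main obstacle.} The delicate step is the counting in Step 2. We must show that the cylinders from distinct time slabs have essentially disjoint spatial projections, with bounded overlap. This gives the $r^{-d}$ bound rather than $r^{-d}\cdot(\#\text{slabs})=r^{-d-2}$. The cone property gives the needed separation $|x_1-x_2|\ge\gamma_0\sqrt{t_2-t_1}\ge\gamma_0\sqrt\theta\,r$ for points in distinct slabs, so each $r$-ball in space meets only $O((\gamma_0^2\theta)^{-d/2})$ points of a maximal $r$-separated family. We will make this precise via a Vitali-type selection in the parabolic metric, exactly as in~\cite[Section~7]{Ilmanen_allencahn}.
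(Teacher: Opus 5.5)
Your proposal is correct in substance and follows the same route as the argument the paper defers to (Ilmanen, Mizuno--Tonegawa, Takasao--Tonegawa). The clearing out lemma, applied at all $s\in(t,t+\lambda^2)$, makes the forward cone $\{|x'-x|<\gamma_0\sqrt{t'-t},\ t<t'<t+\lambda^2\}$ above each point of $Z_\lambda$ free of $\supp\omega$. Hence $|x_1-x_2|\ge\gamma_0\sqrt{|t_2-t_1|}$ for points of $Z_\lambda$ in a common window. Then $Cr^{-d}$ parabolic cylinders of mass $\lesssim r^{d-1}\cdot r^2$ give $\omega(Z_\lambda\cap\text{window})\lesssim r$. (Choose $\lambda$ also so that $t+\lambda^2<T/2$, since the clearing out lemma requires $s<T/2$.)

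The ``main obstacle'' you identify is self-inflicted, and your time-slab bookkeeping contains two false statements:
\begin{itemize}
\item Points in the \emph{same} slab need not be separated at all. They can have equal times, e.g.\ a whole time slice of a hypersurface.
\item Points in \emph{adjacent} slabs can have arbitrarily small time difference. So $|x_1-x_2|\ge\gamma_0\sqrt\theta\, r$ holds only for slabs at least two apart, and if you keep the slab route you need a parity splitting of the slabs to get bounded overlap.
\end{itemize}
It is simpler to cover space first. The cone inequality gives $|t_1-t_2|\le |x_1-x_2|^2/\gamma_0^2<4r^2/\gamma_0^2$ for any two points of $Z_\lambda\cap(B_r(z)\times[a,a+\lambda^2))$. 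So this set lies in $B_r(z)\times J$ with $|J|\le 8r^2/\gamma_0^2$, and it has $\omega$-measure at most $8\gamma_0^{-2}D_1\omega_{d-1}r^{d+1}$ by the limiting density bound. Covering $\torus$ by $Cr^{-d}$ balls of radius $r$ then gives the bound $Cr$ directly. This is exactly the standard proof, with no slab counting needed.
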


Now we integrate the monotonicity formula from Lemma~\ref{lemma:monotonicity formula}, with the bound from Theorem~\ref{thm:positive part of disrepancy measure}, over the base point $y$ and the time horizon $s$ to prove Theorem~\ref{thm:discrepancy measure full}.

\begin{proof}[Proof of Theorem~\ref{thm:discrepancy measure full}]
  This now, too, follows as in~\cite{Ilmanen_allencahn,MizunoTonegawa}.
However, for the convenience of the reader, we briefly outline the strategy.
By \eqref{eq:integral discprepancy positive vanishes}, we only need to prove
  \[
    \dd \xi^\eps_- = \Big( \frac\eps2 |\nabla \uu_\eps|^2 - F_\eps(\uu_\eps)\Big)_- \dd x \,\dd t \stackrel{\ast}{\rightharpoonup} 0 \quad\text{as }\eps\to0.
  \]
We may assume that there exists a Radon measure $\xi_{-}$ such that $\xi_ - ^\varepsilon \stackrel{\ast}{\rightharpoonup} \xi_-$ by Banach--Alao\u{g}lu. 
By \eqref{MF2_a}, letting $\varepsilon \to 0$, we have
\begin{equation}\label{proof Thm3-1}
\begin{split}
  \int_{\R^d\times(0,s)} \frac{\rho_{(y,s)}(x,t)}{2(s-t)} \,\dd \xi_{-} (x,t) 
  \leq C<\infty
\end{split}
\end{equation}
holds some $C>0$. 
By Fubini's theorem and \eqref{proof Thm3-1}, we have
\begin{equation}\label{proof Thm3-2}
\begin{split}
& \int_{\R^d\times(0,s)} \left(\int_{0} ^ T \int_{\mathbb{T}^d} \frac{\rho_{(y,s)}(x,t)}{2(s-t)} \, \dd \omega_s (y) \dd s \right) \dd \xi_{-} (x,t) \\
= & \, \int_{0} ^ T \int_{\mathbb{T}^d} \left( \int_{\R^d\times(0,s)} \frac{\rho_{(y,s)}(x,t)}{2(s-t)} \,\dd \xi_{-} (x,t)\right) \dd \omega_s (y) \dd s
\leq C T \omega_ 0 (\mathbb{T}^d) <\infty,
\end{split}
\end{equation}
where we used the monotonicity of $\omega_ s (\mathbb{T}^d)$. 
Then, by \eqref{proof Thm3-2}, similar to that for the argument in~\cite{Ilmanen_allencahn,MizunoTonegawa} one can prove that for $\xi_-$-almost all $(x,t)$, 
\[
\lim_{s \downarrow t} \int_{\mathbb{T}^d} \rho _{y,s} (x,t) \, \dd \omega_s (y)=0,
\]
and thus $\xi_- (\mathbb{T}^d \times (0,T) \setminus Z^- (T))=0$. Therefore,
\[
\xi_- (\mathbb{T}^d \times (0,T) ) \leq \xi_- (Z^- (T)) \leq \omega (Z^- (T))=0,
\]
where we used Lemma \ref{Lower bounds on forward Gaussian density} and $\xi_- (A) \leq \omega (A)$ for any Borel set $A$, which follows immediately from the definition. This completes the proof of the theorem.
\end{proof}
The following lemma yields a lower bound on the density, which is essential to prove the rectifiablity of the associated varifolds.

\begin{lemma}\label{lemma:lower density bounds}
\begin{enumerate}[(i)]
\item \label{item1} There exists $C(T)>0$ such that 
\begin{equation}\label{supp_omega_t}
\mathcal{H}^{d-1} (\supp \omega_t) \leq C(T) \liminf_{r\downarrow 0} \omega_{t-r^2} (\mathbb{T}^d)
\end{equation}
holds for any $0 < t \leq T$.
\item \label{item2} For any $0<t <\infty$, we have
\begin{equation}\label{density_zero_sets}
\omega_t \left(\left\{ x \in \supp \omega_t \ \middle| \ \lim_{r\downarrow 0} \frac{\omega_t (B_r (x))}{\omega_{d-1} r^{d-1}}=0 \right\} \right)=0.
\end{equation}
\end{enumerate}
\end{lemma}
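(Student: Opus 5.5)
Both parts will follow the scheme of Ilmanen~\cite{Ilmanen_allencahn} and Mizuno--Tonegawa~\cite{MizunoTonegawa}. The ingredients are the clearing out Lemma~\ref{lemma:clearing out}, the monotonicity inequality~\eqref{MF2_a} of Corollary~\ref{cor:discrepancy decay} passed to the limit $\eps\to0$, and the uniform density bound~\eqref{upperbound of density}.

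For \eqref{item1}, fix $0<t\le T$ and a small $r>0$, and cover $\supp\omega_t$ by balls. For every $x\in\supp\omega_t$, apply the clearing out lemma in contrapositive form, centered at time $t$ with scale $r$. That is, choose $t_0=t-r^2$ and $s=t-r^2/2$, so that $2s-t_0=t$ and $\sqrt{2(s-t_0)}=r$. Since $(x,t)\in\supp\omega$ (after checking that $x\in\supp\omega_t$ implies $(x,t)\in\supp\omega$, which follows from the semi-decreasing property), we get
\[
\int \rho_{(y,s)}(x,t_0)\,\dd\omega_s(y)\ge\delta_0.
\]
Next I split this Gaussian integral as in Lemma~\ref{lem6}. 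The contribution from outside $B_{Lr}(x)$ is at most $2^{d-1}D_1e^{-cL^2}$ by Lemma~\ref{lemma:ilmanen}\eqref{item:ilmanen2}, so for $L$ large, depending only on $D_1$ and $\delta_0$, it is below $\delta_0/2$. This yields $\omega_s(B_{Lr}(x))\ge c\,r^{d-1}$ with $s\in[t-r^2,t]$. Applying the Vitali covering lemma to $\{B_{Lr}(x)\}_{x\in\supp\omega_t}$ gives disjoint balls $B_{Lr}(x_j)$, $j=1,\dots,N$, whose fivefold enlargements cover $\supp\omega_t$. Here $s$ depends only on $r$, not on $x$, so I can sum over $j$ to get $Nc\,r^{d-1}\le\omega_s(\torus)$. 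Taking the infimum over the admissible times, $s$ may be replaced by $t-r^2$ up to the semi-monotonicity correction $E_0r^2$. Hence
\[
\mathcal H^{d-1}_{10Lr}(\supp\omega_t)\le C N r^{d-1}\le C\,\omega_{t-r^2}(\torus)+o(1),
\]
and taking $\liminf_{r\downarrow0}$ gives \eqref{supp_omega_t}. The step I expect to need care is this time shift, namely controlling $\omega_s$ by $\omega_{t-r^2}$ via the semi-decreasing property. Since that property gives monotonicity of $\omega_\tau(\torus)$ directly (take $\varphi\equiv1$, so there is no error term), I expect this step to be harmless.

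For \eqref{item2}, let $Z$ be the set of points in $\supp\omega_t$ with zero density. The aim is to show $\omega_t(Z)=0$. For $x\in Z$ and any $\delta>0$, the measure $\omega_t(B_r(x))\le\delta r^{d-1}$ for all small $r$. Pass \eqref{MF2_a} to the limit to obtain Huisken-type monotonicity $\int\rho_{(x,t+h)}(\cdot,t)\,\dd\omega_t\ge\limsup\int\rho_{(x,t+h)}(\cdot,\tau)\,\dd\omega_\tau$ for $\tau>t$. The left-hand side is small when $h$ is small because the density vanishes: split into $B_{L\sqrt h}(x)$ and its complement, using the density bound $D_1$ for the tail. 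Smallness of this backward Gaussian density at time $t$, combined with the clearing out lemma (with base time $t$), shows that $\omega$ vanishes near $(x,t+2h)$.

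It then remains to prove that the set of such $x$ carries no $\omega_t$-mass. I would do this as in Lemma~\ref{Lower bounds on forward Gaussian density}: the set is contained, up to an $\omega$-null set, in the bad set $Z^-$, or one argues via a covering argument with Besicovitch on $\omega_t$. I expect the main obstacle to be exactly this last measure-theoretic step, which transfers the space-time null statement of Lemma~\ref{Lower bounds on forward Gaussian density} to the time slice $\omega_t$. The fix I would try first is to use that $\omega_t(B_r(x))\le\delta r^{d-1}$ on $Z$ together with Lemma~\ref{lemma:clearing out} in a Vitali covering to show $\omega_t(Z\cap K)\le C\delta$ for each compact $K$, and then let $\delta\to0$.
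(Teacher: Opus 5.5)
Your part (i) is the paper's argument. The paper invokes the covering proof of \cite[Corollary~6.1]{TakasaoTonegawa_transport}, which is driven by Lemma~\ref{lemma:clearing out}, together with $\supp\omega_t\subset(\supp\omega)_t$; that inclusion is exactly your semi-decreasing-property check. Your other steps are also correct: the choice $t_0=t-r^2$, $s=t-r^2/2$, the observation that $s$ does not depend on the center, and the use of monotonicity of $\omega_\tau(\torus)$ to pass from $\omega_s$ to $\omega_{t-r^2}$.

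For (ii), your leading route does not prove the statement, and it is far more work than needed.

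\begin{itemize}
\item \textbf{The paper's route.} Combine (i) with the upper-density comparison theorem \cite[Theorem~3.2]{LeonSimon}. If $\limsup_{r\downarrow0}\omega_t(B_r(x))/(\omega_{d-1}r^{d-1})\le s$ on a set $A$, then $\omega_t(A)\le 2^{d-1}s\,\mathcal{H}^{d-1}(A)$. Take $A=Z\subset\supp\omega_t$. By (i), $\mathcal{H}^{d-1}(\supp\omega_t)\le C(T)\,\omega_0(\torus)\le C(T)E_0<\infty$. Hence $\omega_t(Z)\le 2^{d-1}s\,C(T)E_0$ for every $s>0$, and $s\downarrow0$ finishes the proof.

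\item \textbf{Why your Huisken/forward clearing-out route fails.} It looks in the wrong time direction. Knowing that $\supp\omega$ is empty near $(x,t+h)$ does not stop $\omega_t$ from charging $x$, because the semi-decreasing property only bounds mass growth and allows sudden mass loss.

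\item \textbf{Why the reduction to $Z^-$ is not enough.} Lemma~\ref{Lower bounds on forward Gaussian density} is a statement about the space-time measure $\omega$. Via Fubini it gives at best $\omega_\tau(Z_\tau)=0$ for a.e.\ $\tau$. A fixed slice $\{t\}\times Z_t$ is always space-time null, so nothing follows for a given $t$, whereas (ii) is claimed for every $t$.
\end{itemize}

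Your fallback does work and gives every $t$. Set $Z^\delta_k=\{x\in Z:\omega_t(B_\rho(x))\le\delta\rho^{d-1}\text{ for all }\rho<1/k\}$, so that $Z=\bigcup_k Z^\delta_k$. For $2Lr<1/k$, take a maximal disjoint family $B_{Lr}(x_j)$ with centers in $Z^\delta_k$. Bound $N\le C r^{1-d}\omega_0(\torus)$ by the same clearing-out count as in (i). By maximality, $Z^\delta_k\subset\bigcup_j B_{2Lr}(x_j)$. Summing $\omega_t(B_{2Lr}(x_j))\le\delta(2Lr)^{d-1}$ then gives $\omega_t(Z^\delta_k)\le C\delta$, uniformly in $k$. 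This is, however, just a hands-on proof of the density comparison theorem fed by (i). Present (ii) as a direct corollary of (i) and drop the monotonicity detour.
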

\begin{proof}
\step{Argument for~(\ref{item1}).} By following virtually the same argument as in \cite[Corollary~6.1]{TakasaoTonegawa_transport}, Lemma \ref{lemma:clearing out} yields that there exists $C(T)>0$ such that
\begin{equation*}
\mathcal{H}^{d-1} ((\supp \omega)_t) \leq C(T) \liminf_{r\downarrow 0} \omega_{t-r^2} (\mathbb{T}^d)
\end{equation*}
for any $0<t \leq T$, where 
$(\supp \omega)_t = \{ x \in \mathbb{T}^d \mid (x,t) \in \supp \omega \}$.
By the same proof as in~\cite[Lemma 5.1]{TakasaoTonegawa_transport}, $\supp \omega_t \subset (\supp \omega)_t$ holds. 
Hence we obtain \eqref{supp_omega_t}. 

\noindent
\step{Argument for~(\ref{item2}).} Fix an arbitrary $0 < t < \infty$. By \cite[Theorem 3.2]{LeonSimon} we have
\[
\omega _t \left( \left\{ x\in \supp \omega_t \ \middle| \ \limsup_{r\downarrow 0} \frac{\omega_t (B_r (x))}{\omega_{d-1} r^{d-1}} \leq s \right\}\right) 
\leq 2^{d-1} s \mathcal{H}^{d-1} (\supp \omega_t)
\]
for any $s>0$. By this and (\ref{item1}) we obtain \eqref{item2}.
\end{proof}

\section{Proof of Theorem~\ref{thm:ACtoBrakke}: The convergence proof}
\label{sec:convergence}
We first introduce the varifolds $\mu^\eps_t$ associated to our energy measures $\omega^\eps_t$.

\begin{definition}\label{def:varifold}
  For any $t>0$ and $\eps>0$, we define the varifold
  \begin{align*}
    \mu^\eps_t(\dd x,\dd P) := \sum_{i=1}^n \Big(\eps |\nabla u_{\eps,i}|^2 \,\dd x \otimes \delta_{I-\nu^\eps_i\otimes \nu^\eps_i}(\dd P)\Big).
  \end{align*} 
  In other words, for any test function on the Grassmannian bundle $\zeta \in C(\torus\times G_{d-1}(d))$, it holds
  \begin{align*}
      \int_{\torus\times G_{d-1}(d)} \zeta(x,P) \, \dd \mu^\eps_t(x,P)
      = \sum_{i=1}^n 
      \int_{\torus} \zeta(x, I-\nu^\eps_i\otimes \nu^\eps_i)  \eps|\nabla u_{\eps,i}|^2 \, \dd x.
  \end{align*}
\end{definition}

We have the following compactness for these varifolds.
\begin{proposition}\label{prop:compactness of varifolds}
  Let $\eps\downarrow0$ be a subsequence from Proposition~\ref{prop:compactness of energy measures}.
  Then, for almost every $t>0$, the limit $\omega_t$ is $(d-1)$-rectifiable, and there exists a further subsequence $\eps'(t)=\eps'\to0$ such that
  \[
    \mu^{\eps'}_t \stackrel{\ast}{\rightharpoonup} \mu_t,\quad\text{weakly-$\ast$ as measures on }\torus\times G_{d-1}(d),
  \]
  where $\mu_t$ is the unique rectifiable varifold associated to $\omega_t$. 
  Moreover, $\delta \mu_t = - \vec H_t \omega_t$ with
  \begin{align}\label{eq:liminfH}
    \int_{\torus} |\vec H_t|^2 \, \dd \omega_t \leq \liminf_{\eps'\to0} \int_{\torus} \frac1{\eps'} \big|\eps \Delta \uu_{\eps'}  -\partial_{\uu} F_{\eps'}(\uu_{\eps'}) \big|^2 \,\big|_{t} \dd x .
  \end{align}
\end{proposition}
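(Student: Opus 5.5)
The plan is to follow the Hutchinson--Tonegawa / Ilmanen scheme, with the sum over components handled through the first-variation identity~\eqref{eq:IBPLuckhausModica}.

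\emph{Step 1: choice of times and first-variation bound.} By~\eqref{eq:EDI} and Fatou's lemma,
\[
  \int_0^T \liminf_{\eps\to0} \int_{\torus} \frac1\eps\big|\eps\Delta\uu_\eps-\partial_\uu F_\eps(\uu_\eps)\big|^2\,\dd x\,\dd t\le E_0 .
\]
Hence for a.e.\ $t$ the liminf is finite. We pass to a further subsequence $\eps'(t)$ realizing it, and use compactness of Radon measures on the compact space $\torus\times G_{d-1}(d)$ to get $\mu^{\eps'}_t\stackrel{\ast}{\rightharpoonup}\mu_t$. The total mass of $\mu^\eps_t$ is $\int\eps|\nabla\uu_\eps|^2=\omega^\eps_t(\torus)+\xi^\eps_t(\torus)$. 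Theorem~\ref{thm:discrepancy measure full} alone gives only space--time vanishing of the discrepancy. I will therefore first apply Fubini together with Theorem~\ref{thm:positive part of disrepancy measure}, and the $L^1_{t}$-convergence of $\int|\xi^\eps_t|$ that follows from the monotonicity argument, to select $t$ such that, along the same subsequence, $|\xi^{\eps'}_t|(\torus)\to0$. Since a countable union of null sets is null, this costs nothing. Then the weight $\|\mu_t\|=\omega_t$.

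\emph{Step 2: first variation.} For $X\in C^1(\torus;\R^d)$ we have
\[
  \delta\mu^\eps_t(X)=\sum_i\int(I-\nu_i\otimes\nu_i)\cdotdot\nabla X\,\eps|\nabla u_i|^2 .
\]
Using~\eqref{eq:IBPLuckhausModica} tested against $X$, this equals
\[
  \int \nabla\cdot X\,\dd\xi^\eps_t+\int X\cdot(\nabla\uu)^\ast\big(\eps\Delta\uu-\partial_\uu F_\eps(\uu)\big)\,\dd x .
\]
The first term vanishes by Step 1. By Cauchy--Schwarz, the second is bounded by
\[
  \Big(\int|X|^2\eps|\nabla\uu|^2\Big)^{1/2}\Big(\int\tfrac1\eps|\eps\Delta\uu-\partial_\uu F_\eps|^2\Big)^{1/2}.
\]
Passing to the limit gives $|\delta\mu_t(X)|\le \|X\|_{L^2(\omega_t)}\,\Lambda_t^{1/2}$, where $\Lambda_t$ is the liminf in~\eqref{eq:liminfH}. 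By Riesz representation, $\delta\mu_t=-\vec H_t\omega_t$ with $\int|\vec H_t|^2\dd\omega_t\le\Lambda_t$, which is~\eqref{eq:liminfH}.

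\emph{Step 3: rectifiability.} Theorem~\ref{thm:density} gives the upper density bound $D_1$. Lemma~\ref{lemma:lower density bounds}\eqref{item2} shows that the lower density $\Theta^{d-1}(\omega_t,x)$ is positive $\omega_t$-a.e. Together with the $L^2$ first-variation bound, Allard's monotonicity yields existence of the density and a positive lower bound for it, and Allard's rectifiability theorem then gives that $\omega_t$ is $(d-1)$-rectifiable. It remains to show that $\mu_t$ is the varifold associated to $\omega_t$, i.e.\ that its Grassmannian part is $\delta_{T_x\omega_t}$. I expect this to be the main obstacle. My plan is to follow Hutchinson--Tonegawa and Tonegawa's argument via blow-ups at $\omega_t$-a.e.\ $x$. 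The tangent varifold of $\mu_t$ is stationary, with weight $\theta\,\mathcal H^{d-1}\llcorner T_x\omega_t$, so by a constancy argument its first variation vanishing forces each $P$ in its support to contain... more concretely, stationarity of $\int (I-P)$-type identities combined with $\|V\|$ supported on a plane shows $P=T_x\omega_t$ for $V$-a.e.\ $P$. The vectorial feature (several $\nu_i$ at once) is harmless here, because the identity in Step 2 already sums over $i$.
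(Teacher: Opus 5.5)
Your proof is correct and has the same skeleton as the paper's:
\begin{itemize}
\item Banach--Alaoglu at a fixed time $t$;
\item rewriting $\delta\mu^\eps_t(X)$ via~\eqref{eq:IBPLuckhausModica};
\item Cauchy--Schwarz plus Fatou in time for the $L^2$ bound on $\vec H_t$;
\item then the density lemma and Allard's theorem.
\end{itemize}
There are two differences worth noting.

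\textbf{Step~1.} Here you are more careful than the paper. The paper simply invokes Theorem~\ref{thm:discrepancy measure full} to get $\|\mu_t\|=\omega_t$ and to discard $\int\nabla\cdot X\,\dd\xi^\eps_t$ at a fixed time, although that theorem is a space--time statement. Your route through $\int_0^T|\xi^\eps_t|(\torus)\,\dd t\to0$ is the right way to make this precise. That convergence does follow from Theorems~\ref{thm:positive part of disrepancy measure} and~\ref{thm:discrepancy measure full} together with $\xi^\eps_{t,-}(\torus)\le E_0$. You only need to order the choices correctly. First extract a $t$-independent subsequence along which $|\xi^\eps_t|(\torus)\to0$ for a.e.\ $t$. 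Then, inside it, use Fatou to choose $\eps'(t)$ realizing a finite liminf of the dissipation. Intersecting full-measure sets of times does not by itself give one subsequence that does both.

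\textbf{Step~3.} The ``main obstacle'' is not one. Allard's rectifiability theorem is a statement about the varifold itself. Locally bounded first variation and $\Theta^{*(d-1)}(\|\mu_t\|,x)>0$ for $\|\mu_t\|$-a.e.\ $x$ already imply that $\mu_t$ is a rectifiable varifold. Then $\mu_t=\omega_t\otimes\delta_{T_x\omega_t}$ follows by uniqueness of the rectifiable varifold with given weight, which is exactly the paper's one-line conclusion. Your blow-up argument is the corresponding internal step of Allard's proof, and it is sound: testing the stationary tangent varifold $C$ with $X=\psi(x)(x\cdot e)e$, $e\perp T_x\omega_t$, gives $\int\psi|Pe|^2\,\dd C=0$. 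So it is redundant rather than wrong.

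Likewise, Lemma~\ref{lemma:lower density bounds}\eqref{item2} gives positivity of the \emph{upper} density $\omega_t$-a.e., not the lower density as you state. The upper density is precisely Allard's hypothesis, so neither the lower density nor a monotonicity argument is needed.
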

Note carefully, that in contrast to Proposition~\ref{prop:compactness of energy measures}, here the subsequence depends on $t$. 
Nevertheless, this turns out to be enough as we can identify the limit. (Note that since $(0,\infty)$ is uncountable, this of course does not allow for a diagonal argument to select a single subsequence.)
\begin{proof}
  Fix $t\in(0,\infty)$. 
  First, by Banach--Alao\u{g}lu, since $\mu_t^\eps(\torus \times G_{d-1}(d)) = \omega^\eps_t(\torus) \leq \omega^\eps_0(\torus)\leq C$, we can extract a subsequence $\eps'\to0$ such that $\mu^{\eps'}_t \stackrel{\ast}{\rightharpoonup} \mu_t$ for some varifold $\mu_t$. 
  By Theorem~\ref{thm:discrepancy measure full} and Proposition~\ref{prop:compactness of energy measures}, $|\mu_t|=\omega_t$ for a.e.\ $t\in (0,\infty)$.
  Moreover, also the first variations converge, i.e., $\delta \mu^{\eps'}_t\stackrel{\ast}{\rightharpoonup}  \delta \mu_t$.
  Hence, for any vector field $X\in C^1(\torus;\R^d)$, we have
  \begin{align*}
    \delta \mu_t (X ) -\mu_t(|X|^2)
    &= \lim_{\eps'\to0} \Big(\delta \mu^{\eps'}_t(X) -\mu^{\eps'}_t(|X|^2) \Big).
  \end{align*}
  The second right-hand side term can be rewritten via
  \begin{align*}
   \mu^{\eps}_t(|X|^2) = \omega^\eps_t(|X|^2) +\xi^\eps_t(|X|^2) \to \omega_t(|X|^2) \quad \text{as }\eps\to0,
  \end{align*}
  and for the first one we have
  \begin{align*}
    \delta \mu^{\eps}_t(X) 
    &= \int_{\torus} \sum_{i=1}^n \nabla X \cdotdot (I-\nu^{\eps}_i \otimes \nu^{\eps}_i) \, \eps |\nabla u_{\eps,i}|^2 \, \dd x
    \\&=
     \int_{\torus} \nabla \cdot X \omega^\eps_t -
      \nabla X \cdotdot  \big( \eps 
      \nabla 
      \uu_{\eps}^\ast \nabla \uu_{\eps} \big)\,\dd x
    - \int_{\torus} \nabla \cdot X \, \dd \xi^\eps_t.
  \end{align*}
  Integrating by parts and using~\eqref{eq:IBPLuckhausModica} for the first integral, while estimating the second one via Theorem~\ref{thm:discrepancy measure full}, this yields
  \begin{align}\label{eq:deltamueps}
   \limsup_{\eps\to0}\bigg|
    \delta \mu^{\eps}_t(X)  
   - 
   \int_{\torus} X \cdot (\nabla \uu_\eps)^\ast (\eps \Delta \uu_\eps  -\partial_{\uu} F_\eps(\uu_\eps) ) \big|_{t}\,\dd x
   \bigg|=0.
  \end{align}
  Hence, evaluating along the subsequence $\eps'(t)$, by Cauchy--Schwarz and Theorem~\ref{thm:discrepancy measure full}, we obtain
  \begin{align*}
    |\delta \mu_t(X)| 
    &= \lim_{\eps'\to0} |\delta \mu^{\eps'}_t(X)|
    \\&\leq 
    \liminf_{\eps'\to0} \Big( \int_{\torus} |X|^2 \eps' |\nabla \uu_{\eps'}|^2 \big|_{t}\, \dd x\Big)^\frac12
    \Big( \int_{\torus} \frac1{\eps'} \big|\eps' \Delta \uu_{\eps'}  -\partial_{\uu} F_{\eps'}(\uu_{\eps'}) \big|^2\big|_{t} \, \dd x\Big)^\frac12
    \\&
    = \Big(\int_{\torus} |X|^2 \, \dd \omega_t\Big)^\frac12
    \Big( \liminf_{\eps'\to0} \int_{\torus} \frac1{\eps'} \big|\eps' \Delta \uu_{\eps'}  -\partial_{\uu} F_{\eps'}(\uu_{\eps'}) \big|^2 \,\big|_{t} \dd x \Big)^\frac12.
  \end{align*}
  By Fatou's lemma, the energy estimate~\eqref{eq:EDI} implies that for a.e.\ $t\in (0,\infty)$,
  \begin{align}\label{L2bound_for_H}
    \Big( \liminf_{\eps'\to0} \int_{\torus} \frac1{\eps'} \big| \eps' \Delta \uu_{\eps'}  -\partial_{\uu} F_{\eps'}(\uu_{\eps'}) \big|^2\big|_{t} \, \dd x \Big)^\frac12 <\infty,
  \end{align}
  and hence, any such limit $\mu_t$ has bounded first variation.
  Together with the lower density bounds from Lemma~\ref{lemma:lower density bounds}\eqref{item2}, this allows us to apply Allard's rectifiability theorem. Hence $\omega_t$ is rectifiable for a.e.\ $t\in (0,\infty)$.
  In that case the rectifiable varifold associated to $\omega_t$ is uniquely given by $\mu_t = \omega_t\otimes \delta_{T_x{\omega_t}}$, where $T_x{\omega_t}$ is the approximate tangent plane at $x$. Hence any converging subsequence converges to this varifold and we have~\eqref{eq:liminfH}.
\end{proof}

It is automatic that with the varifolds $\mu^\eps_t$, also their first variations $\delta\mu^\eps_t$ converge to the first variation $\delta \mu_t$ of the limit $\mu_t$.
Applying this to the transport term $\delta \mu^\eps_t(\nabla \varphi)$ in Brakke's inequality, we would get in the limit $\delta \mu_t (\nabla \varphi) = -\int_{\torus} \nabla \varphi \cdot \vec H_t \, \dd \omega_t$. 
To guarantee that this term has geometric meaning, we show that $\vec{H}_t$ is in fact a normal vector. 
This is done in the following proposition by adapting Ilmanen's ``Seven Epsilon Proof'' to our vectorial setting, cf.~\cite[pp.\ 450--451]{Ilmanen1994}.

\begin{proposition}[Brakke's transport term]\label{prop:7epsilon}
  For almost every $t\in (0,\infty)$, the limit varifold $\mu_t$ satisfies
  \begin{align}
    \delta \mu_t(X) 
    = -\int_{\torus} X \cdot (T_{(\cdot)} \omega_t)^\perp \vec H_t \, \dd \omega_t
  \end{align}
  for any $X \in C(\torus;\R^d)$. In other words, $\vec{H}_t$ is a normal vector $\omega_t$-a.e.
\end{proposition}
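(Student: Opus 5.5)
Fix $t$ outside a null set such that Proposition~\ref{prop:compactness of varifolds} applies, the bound~\eqref{L2bound_for_H} holds, and the discrepancy $\xi^\eps_t$ tends to zero in total variation along a subsequence. The last property is obtained from Theorem~\ref{thm:discrepancy measure full} combined with the $L^1$ vanishing of $\xi^\eps_+$ in Theorem~\ref{thm:positive part of disrepancy measure}, which forces $\int_0^T |\xi^\eps_t|(\torus)\,\dd t\to0$, so we may pass to a.e.\ $t$ along a further subsequence. The plan follows Ilmanen's ``seven epsilon'' argument. Let $P_\eps:=(\nabla\uu_\eps)^\ast\ff_\eps^-$ with $\ff_\eps^-:=\eps\Delta\uu_\eps-\partial_\uu F_\eps(\uu_\eps)$. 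By~\eqref{eq:deltamueps} we have $\delta\mu_t(X)=\lim\int X\cdot P_\eps\,\dd x$. Since $\mu_t$ is rectifiable with bounded first variation, $\delta\mu_t=-\vec H_t\,\omega_t$. It therefore suffices to show, for every $X$ and every continuous matrix field $S$ on the Grassmannian bundle, that the tangential part vanishes, i.e.\
\[
  \int_{\torus} X\cdot T_x\omega_t\,\vec H_t\,\dd\omega_t=0 .
\]

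The key pointwise observation is that $P_\eps=\sum_i \nabla u_i\, f_i$ with $f_i:=(\ff^-_\eps)_i$, so $P_\eps$ lies in the span of the normals $\nu_i$. Hence, for each $i$, the projection $(I-\nu_i\otimes\nu_i)$ kills the $i$-th summand. I would write
\[
  \int X\cdot P_\eps=\sum_i\int X\cdot(\nu_i\otimes\nu_i)\nabla u_i\,f_i
\]
and compare this with the varifold-weighted expression $\int \zeta(x,P)\ldots$ by testing $\mu^\eps_t$ against $\Phi(x,P)=(I-P)X$, where $I-P$ equals $\nu_i\otimes\nu_i$ on the $i$-th piece. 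By Cauchy--Schwarz, the error made by replacing $(I-P)$ with the limit projection $(T_x\omega_t)^\perp$ is bounded by
\[
  \Big(\int\big|\big((I-P)-S(x)\big)X\big|^2\,\dd\mu^\eps_t\Big)^{1/2}\Big(\frac1\eps\int|\ff^-_\eps|^2\Big)^{1/2},
\]
where $S$ is any continuous approximation of $x\mapsto (T_x\omega_t)^\perp$ in $L^2(\omega_t)$. The first factor converges, by varifold convergence $\mu^\eps_t\rightharpoonup\mu_t=\omega_t\otimes\delta_{T_x\omega_t}$, to $\|((T\omega_t)^\perp-S)X\|_{L^2(\omega_t)}$, which can be made arbitrarily small. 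The second factor is bounded by~\eqref{L2bound_for_H}.

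Thus $\lim\int X\cdot P_\eps=\lim\int S X\cdot P_\eps+O(\delta)$. The term $\int SX\cdot P_\eps$ has a continuous test field and converges to $\delta\mu_t(S X)=-\int SX\cdot\vec H_t\,\dd\omega_t$. Letting $\delta\to0$ gives $\delta\mu_t(X)=-\int X\cdot(T\omega_t)^\perp\vec H_t\,\dd\omega_t$, and comparing with $\delta\mu_t(X)=-\int X\cdot\vec H_t\,\dd\omega_t$ yields normality.

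The main obstacle is justifying that $\int SX\cdot P_\eps\to\delta\mu_t(SX)$ through~\eqref{eq:deltamueps}, which requires $SX\in C^1$. I would first mollify $S$ to a $C^1$ field and absorb the difference using the same Cauchy--Schwarz bound. A second delicate point is that~\eqref{eq:deltamueps} needs the discrepancy at the fixed time $t$ to vanish, not only in space-time. This is why I restrict to those a.e.\ $t$ where $|\xi^\eps_t|(\torus)\to0$ along the $t$-dependent subsequence $\eps'(t)$, which is compatible with Proposition~\ref{prop:compactness of varifolds}.
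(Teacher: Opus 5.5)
Your proposal is correct and is essentially the paper's seven-epsilon argument: your $C^1$ field $SX$ plays the role of the paper's $Y\approx(T_{(\cdot)}\omega_t)^\perp X$ in $L^2(\omega_t)$, and the identity $X\cdot\nabla u_i=(X\cdot\nu_i)\,\nu_i\cdot\nabla u_i$ is the paper's term $E=0$. Likewise, your Cauchy--Schwarz bound tested against $|((I-P)-S)X|^2\,\dd\mu^\eps_t$ is exactly the paper's estimate of $D$ with $\zeta(x,P)=|Y-(I-P)X|^2$.

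Your extra step, deriving $\int_0^T|\xi^\eps_t|(\torus)\,\dd t\to0$ to get vanishing discrepancy at the fixed time $t$, usefully justifies what the paper uses only implicitly in \eqref{eq:deltamueps}. Note that you should first take $X\in C^1$, so that $SX\in C^1$, and then extend to $X\in C(\torus;\R^d)$ by density; both sides are continuous in $X$ because $\vec H_t\in L^2(\omega_t)$.
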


\begin{proof}
For a.e. $t\geq 0$, there exists a subsequence $\varepsilon'(t) \to 0$
such that $\mu_t ^{\varepsilon'} \overset{*}{\rightharpoonup} \mu_t$, 
\eqref{eq:liminfH}, and \eqref{L2bound_for_H} by Proposition \ref{prop:compactness of varifolds}. 
Fix such $t\geq 0$ and $\varepsilon' (t) \to 0$. Fix an arbitrary $\delta>0$. Since 
$ (T_{(\cdot)} \omega_t)^\perp X  \in L^2 (\omega_t ; \mathbb{R}^d)$,
there exists $Y \in C^1 (\mathbb{T}^d ; \mathbb{R}^d)$ such that 
\begin{equation}\label{7eps-2}
\begin{split}
\int_{\mathbb{T}^d} |Y -  (T_{(\cdot)} \omega_t)^\perp X  |^2 \, \dd \omega_t
\leq \delta ^2 .
\end{split}
\end{equation}
We decompose the desired bilinear term into six terms:
\begin{equation}\label{7eps-3}
\begin{split}
& \int_{\mathbb{T}^d} X \cdot (T_{(\cdot)} \omega_t)^\perp \vec{H}_t \, \dd \omega_t \\
= & \, \int_{\mathbb{T}^d} (X \cdot (T_{(\cdot)} \omega_t)^\perp  \vec{H}_t 
- Y\cdot \vec{H}_t) \, \dd \omega_t 
\\& + (\delta \mu_t (Y) - \delta \mu_t ^{\varepsilon'} (Y) ) \\
& + \left( \delta \mu_t ^{\varepsilon'} (Y) - \int_{\mathbb{T}^d} Y\cdot (\nabla \uu _{\varepsilon'}) ^{\ast} (\varepsilon' \Delta \uu_{\varepsilon'} -\partial _{\uu} F_{\varepsilon'} (\uu_{\varepsilon'})) \, \dd x \right) \\
& + \sum_{i=1} ^n \int_{\mathbb{T}^d} 
(Y \cdot \nabla u_{\varepsilon' ,i} - (X \cdot \nu_i ^{\varepsilon'}) \nu_i ^{\varepsilon'} \cdot \nabla u_{\varepsilon',i} )
(\varepsilon' \Delta u_{\varepsilon' ,i} -\partial _{u_i} F_{\varepsilon'} (\uu_{\varepsilon'})) \, \dd x \\
& + \sum_{i=1} ^n \int_{\mathbb{T}^d} \{ (X \cdot \nu_i ^{\varepsilon'}) \nu_i ^{\varepsilon'} \cdot \nabla u_{\varepsilon',i} -X \cdot \nabla u_{\varepsilon',i} \}(\varepsilon' \Delta u_{\varepsilon' ,i} -\partial _{u_i} F_{\varepsilon'} (\uu_{\varepsilon'})) \, \dd x \\
& + \sum_{i=1} ^n \int_{\mathbb{T}^d} X \cdot \nabla u_{\varepsilon',i} (\varepsilon' \Delta u_{\varepsilon' ,i} -\partial _{u_i} F_{\varepsilon'} (\uu_{\varepsilon'})) \, \dd x\\
=:& \, A+B+C+D+E+F.
\end{split}
\end{equation}
By \eqref{eq:liminfH}, \eqref{L2bound_for_H}, \eqref{7eps-2}, and the Cauchy--Schwarz inequality, there exists $C>0$ such that
\[
|A| \leq  C \delta.
\]
The convergence $\mu_t ^{\varepsilon'} \overset{*}{\rightharpoonup} \mu_t$ implies
$|B|\to 0$ as $\varepsilon'\to 0$.
By \eqref{eq:deltamueps}, we have $|C| \to 0$ as $\varepsilon'\to 0$.
By the Cauchy--Schwarz inequality and \eqref{L2bound_for_H}, 
\[
|D| \leq C \left( \sum_{i=1} ^n \int_{\mathbb{T}^d} |Y - (X\cdot \nu_i ^{\varepsilon'}) \nu_i ^{\varepsilon'}|^2 \varepsilon' |\nabla u_{\varepsilon',i}|^2 \, \dd x \right)^{\frac12}
\]
holds for some $C>0$. Set $\zeta =\zeta (x,P):= |Y (x) + X (x) \cdot (-I +P)|^2$. Then $\zeta (x, I-\nu_i ^{\varepsilon'} \otimes \nu_i ^{\varepsilon'}) =|Y - (X \cdot \nu_i ^{\varepsilon'}) \nu_i ^{\varepsilon'}|^2$ and thus 
\begin{equation*}
\begin{split}
|D|\leq & \, C \left( \sum_{i=1} ^n \int_{\mathbb{T}^d} |Y - (X \cdot \nu_i ^{\varepsilon'}) \nu_i ^{\varepsilon'}|^2 \varepsilon' |\nabla u_{\varepsilon',i}|^2 \, \dd x \right)^{\frac12}
=  C \left( \int_{\mathbb{T}^d \times G_{d-1} (d)} \zeta (x,P) \, \dd \mu_t ^{\varepsilon'} (x,P) \right)^{\frac12} \\
\to & \, C \left( \int _{\mathbb{T}^d \times G_{d-1} (d)} \zeta (x,P) \, \dd \mu_t (x,P) \right)^{\frac12}
\leq C\delta,
\end{split}
\end{equation*}
where we used \eqref{7eps-2} and $\zeta (x,P)= |Y - X\cdot (T_{(\cdot)} \omega_t)^\perp |^2$.
One can easily see $E=0$ since $(X \cdot \nu_i ^{\varepsilon'}) \nu_i ^{\varepsilon'} \cdot \nabla u_{\varepsilon',i} -X\cdot \nabla u_{\varepsilon',i}=0$. 
Finally, we have $F\to \delta \mu_t (X) $ as $\varepsilon'\to0$, since 
\eqref{eq:deltamueps} implies $|\delta \mu_t ^{\varepsilon'} (X) -F| \to 0$
and $|\delta \mu_t (X) -\delta \mu_t ^{\varepsilon'} (X)| \to 0$ by $\mu_t ^{\varepsilon'} \overset{*}{\rightharpoonup} \mu_t$.
Since $\delta > 0$ is arbitrary, we obtain the desired claim.
\end{proof}

While the measurability of the mass measures $\omega_t$ in time is immediate, the measurable dependence of the associated varifolds $\mu_t$ on $t$ is more subtle.
We argue similarly to~\cite[Theorem 3.10]{LiuWorkman}, to obtain this measurability.

\begin{proposition}\label{prop:measurability}
  The following functions, defined on $(0,\infty)$, are measurable:
\begin{enumerate}[(i)]
\item \label{measurability1} $t \mapsto \omega _t (\varphi)$, for any $\varphi \in C (\mathbb{T}^d)$,
\item \label{measurability2} $t \mapsto \mu_t (\zeta)$, for any $\zeta \in C(\mathbb{T}^d\times G_{d-1}(d))$,
\item \label{measurability3}
$t \mapsto \int_{\torus} X\cdot  \vec{H}_t\, \dd \omega_t$, for any $X \in C (\mathbb{T}^d;\R^d)$, and
\item \label{measurability5}
$t \mapsto \int_{\torus} \varphi |\vec{H}_t|^2 \, \dd \omega_t$, for any $\varphi \in C (\mathbb{T}^d)$.
\end{enumerate}
\end{proposition}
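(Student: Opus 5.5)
The plan is to prove the four items in order, each time writing the quantity as a pointwise-in-$t$ limit (or $\liminf$) of functions known to be measurable, or as a monotone/countable combination of such.

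\emph{Item (i).} By Proposition~\ref{prop:compactness of energy measures}, along the fixed subsequence $\eps_j$ we have $\omega^{\eps_j}_t(\varphi)\to\omega_t(\varphi)$ for \emph{every} $t$. Each $t\mapsto\omega^{\eps_j}_t(\varphi)$ is continuous because $\uu_{\eps_j}$ is smooth for $t>0$. A pointwise limit of measurable functions is measurable, which gives (i).

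\emph{Item (ii).} Here the subsequence in Proposition~\ref{prop:compactness of varifolds} depends on $t$, so I would avoid it. For a.e.\ $t$ the varifold is $\mu_t=\omega_t\otimes\delta_{T_x\omega_t}$, so I would write $\mu_t(\zeta)$ as a limit of expressions in $\omega_t$ alone, following~\cite[Theorem~3.10]{LiuWorkman}. First, for a.e.\ $t$ the limit varifold is uniquely determined, so every subsequence of $\mu^{\eps_j}_t$ converges to $\mu_t$. Hence the whole sequence converges, $\mu^{\eps_j}_t(\zeta)\to\mu_t(\zeta)$ for a.e.\ $t$, and each $t\mapsto\mu^{\eps_j}_t(\zeta)$ is continuous. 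Since Lebesgue measurability only needs a.e.\ equality, (ii) follows. The key check is that for a.e.\ $t$ both the rectifiability and the identity $|\mu_t|=\omega_t$ (from Theorem~\ref{thm:discrepancy measure full}) hold, and that together they force uniqueness of subsequential limits. This was already argued in the proof of Proposition~\ref{prop:compactness of varifolds}.

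\emph{Items (iii) and (iv).} For (iii), $\int X\cdot\vec H_t\,\dd\omega_t=-\delta\mu_t(X)=-\mu_t(\nabla X\cdotdot P)$ for $X\in C^1$. Since $(x,P)\mapsto \nabla X(x)\cdotdot P$ is continuous on the Grassmannian bundle, this is measurable by (ii). For merely continuous $X$, I would approximate by $X_k\in C^1$ uniformly. Then $\big|\int (X-X_k)\cdot\vec H_t\,\dd\omega_t\big|\le\|X-X_k\|_\infty\,\omega_t(\torus)^{1/2}\|\vec H_t\|_{L^2(\omega_t)}$, which tends to $0$ for every $t$ with $\vec H_t\in L^2(\omega_t)$; by \eqref{L2bound_for_H} that is a.e.\ $t$. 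For (iv), I would use the duality formula
\[
\int\varphi|\vec H_t|^2\,\dd\omega_t=\sup_{X}\Big(2\int X\cdot\vec H_t\,\dd\omega_t-\int\frac{|X|^2}{\varphi}\,\dd\omega_t\Big)
\]
for $\varphi\ge0$, restricting first to $\varphi>0$ and then approximating $\varphi+1/k\downarrow\varphi$ via monotone convergence. The supremum ranges over a countable dense subset of $C^1(\torus;\R^d)$, which is enough because $C^1$ is dense in $L^2(\omega_t/\varphi)$. A countable supremum of functions measurable by (i) and (iii) is measurable. A general $\varphi$ is the difference of its positive and negative parts.

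I expect the main obstacle to be (ii), namely justifying that the full sequence $\mu^{\eps_j}_t$ converges for a.e.\ $t$. I would make sure the exceptional null set is chosen independently of $\zeta$, using only \eqref{L2bound_for_H}, the rectifiability of $\omega_t$, and $|\mu_t|=\omega_t$. I would also verify carefully that any subsequential limit of $\mu^{\eps_j}_t$ at such a $t$ has bounded first variation and mass $\omega_t$. Then Allard's theorem applies to that limit as well, so it coincides with $\omega_t\otimes\delta_{T_x\omega_t}$.
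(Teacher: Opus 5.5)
Your items (i), (iii) and (iv) are fine as reductions to (ii). The gap is in (ii): it rests on a claim that is not available, namely that for a.e.\ $t$ \emph{every} subsequence of $\mu^{\eps_j}_t$ converges to $\mu_t$, so that the whole sequence converges. To identify a subsequential limit $\tilde\mu$ with $\omega_t\otimes\delta_{T_x\omega_t}$, you need two facts along that particular subsequence and at that particular time.

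First, you need $\xi^\eps_t\to0$. Since $|\mu^\eps_t|=\omega^\eps_t+\xi^\eps_t$, otherwise $|\tilde\mu|\neq\omega_t$. Second, you need a bound on $g_\eps(t):=\int_{\torus}\frac1\eps|\eps\Delta\uu_\eps-\partial_\uu F_\eps(\uu_\eps)|^2\,\dd x$. Otherwise $\tilde\mu$ need not have bounded first variation, Allard's theorem does not apply, and oscillating tangent planes are not excluded.

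Only the positive part of the discrepancy is controlled pointwise in time (Lemma~\ref{lem4.4}). The negative part and $g_\eps$ are controlled only in $L^1(0,T)$, via Theorem~\ref{thm:discrepancy measure full} and \eqref{eq:EDI}. Fatou's lemma therefore gives information on the $\liminf$ at a.e.\ $t$, not on the $\limsup$. An $L^1$-bounded sequence can have $\limsup_j g_{\eps_j}(t)=+\infty$ at every $t$. This is exactly why Proposition~\ref{prop:compactness of varifolds} only provides a $t$-dependent subsequence, as the paper stresses. So the step you flagged as the main obstacle cannot be verified as formulated. Without it, the a.e.\ limit of $t\mapsto\mu^{\eps_j}_t(\zeta)$ is not known to exist.

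Your route can be repaired by building the $\liminf$ information into the approximants. Set $\eta_\eps:=(\int_0^T|\xi^\eps_t|(\torus)\,\dd t)^{1/2}+\eps\to0$ and $h_\eps(t):=g_\eps(t)+\eta_\eps^{-1}|\xi^\eps_t|(\torus)$. Then $\int_0^T h_\eps\,\dd t$ is bounded, so Fatou gives $\liminf_j h_{\eps_j}(t)<\infty$ for a.e.\ $t$. Along indices with $h_{\eps_j}(t)\le K$, both facts above hold. Hence every limit point along those indices is $\mu_t$, by your Allard, density and uniqueness argument. It follows that $\mu_t(\zeta)=\lim_{K\to\infty}\limsup_j H_{j,K}(t)$, where $H_{j,K}(t)$ equals $\mu^{\eps_j}_t(\zeta)$ on $\{h_{\eps_j}\le K\}$ and $-\infty$ elsewhere. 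This is a countable limit of measurable functions.

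The paper instead never returns to the $\eps$-level. It proves a closure property of the \emph{limit} family in time. Suppose $t_j\to s$, with $\mu_{t_j}$ and $\mu_s$ rectifiable, $\sup_j\int|\vec H_{t_j}|^2\,\dd\omega_{t_j}<\infty$, $\omega_{t_j}\stackrel{\ast}{\rightharpoonup}\omega_s$ and $\mu_{t_j}\stackrel{\ast}{\rightharpoonup}\tilde\mu$. Then $\tilde\mu=\mu_s$, again by Allard, Lemma~\ref{lemma:lower density bounds}\eqref{item2}, and uniqueness of the rectifiable varifold with given weight. The paper then feeds this into the measurability scheme of \cite{ChiesaTakasao,LiuWorkman}.
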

\begin{proof}
The proof is almost identical to the appendix in \cite{ChiesaTakasao} (see also \cite[Theorem 3.10]{LiuWorkman}). However, since the integrality of our family of varifolds $\{\mu_t\}_{t\geq 0}$ remains open, we prove only the parts where the arguments differ. Specifically, to establish (\ref{measurability2}) and (\ref{measurability5}) under our assumptions, it suffices to show the following:
If $s \geq 0$ and a sequence $t_j \to s$ with $t_j\geq0$ satisfy 
\begin{enumerate}[(a)]
\item $\mu_{t_j}$ and $\mu_s$ are rectifiable with $\sup_{j\geq 1} \int |\vec{H}_{t_j} |^2 \, \dd \omega_{t_j} <\infty$, $\int |\vec{H}_s |^2 \, \dd \omega_s <\infty$, 
\item $\omega_{t_{j}} \stackrel{\ast}{\rightharpoonup} \omega_s$,
\item $\mu_{t_{j}} \stackrel{\ast}{\rightharpoonup} \tilde \mu$ for some $\tilde \mu$, 
\end{enumerate}
then $\tilde \mu = \mu_s$. Since the first variations $\delta \mu_{t_j}$ are uniformly bounded, Allard's rectifiability theorem yields that
$\tilde \mu \llcorner \{ x \mid \theta ^{\ast (d-1)} (\| \tilde \mu \|,x) > 0 \} $ is rectifiable. 
Here $\theta ^{\ast (d-1)}(\| \tilde \mu \|,x) = \limsup_{r\to 0} \frac{\|\tilde \mu\|(B_r(x))}{r^{d-1}}$ denotes the upper $(d-1)$-dimensional density. Note that $\| \mu_{t_{j}} \| =\omega_{t_{j}}$, $\omega_{t_{j}} \stackrel{\ast}{\rightharpoonup} \omega_s$, and  $\mu_{t_{j}} \stackrel{\ast}{\rightharpoonup} \tilde \mu$ imply $\| \tilde \mu \| =\omega_s$.
By this and \eqref{density_zero_sets} we have
\[
\tilde \mu \llcorner \{ x \mid \theta ^{\ast (d-1)} (\| \tilde \mu \|,x) > 0 \}  = \tilde \mu \llcorner \{ x \mid \theta ^{\ast (d-1)} (\omega_s,x) > 0 \} = \tilde \mu
\] and thus $\tilde \mu$ is rectifiable.
Since 
$\|\tilde \mu \|=\omega_s = \| \mu_s \|$
and there is only one rectifiable varifold $\tilde \mu$ satisfying $\omega_s = \| \tilde \mu \|$ (this uniqueness holds only for rectifiable varifolds), we have $\tilde \mu=\mu_s$. 
\end{proof}

Now we are in the position to prove our main result.

\begin{proof}[Proof of Theorem~\ref{thm:ACtoBrakke}]
  By Propositions~\ref{prop:compactness of energy measures}, \ref{prop:compactness of u}, \ref{prop:compactness of varifolds}, and \ref{prop:measurability}, we have compactness and measurability in time, as well as the rectifiability of $\omega_t$ for a.e.\ $t$. We only need to show that any limit $\omega_t$ satisfies Brakke's inequality~\eqref{eq:Brakke}. 
  By a standard approximation argument, it is sufficient to prove it for test functions $\varphi\in C^2(\torus\times [0,\infty))$.

  To that end, let $0\leq t_1<t_2$ and $\varphi\in C^2(\torus\times [0,\infty))$ be fixed.
  We integrate the $\eps$-version of Brakke's inequality from Lemma~\ref{lemma:epsBrakke}, with a time-dependent test function $\varphi$ over $t$ from $t_1$ to $t_2$ to obtain
  \begin{align}
    \notag
   \int_{\torus} \varphi  \,\dd \omega_{(\cdot)}^\eps\Big|_{t_1}^{t_2}
    =
    - &\int_{t_1}^{t_2} \int_{\torus} \varphi \, \frac1\eps \big|\eps\Delta \uu_\eps{-} \partial_{\uu}F_\eps(\uu_\eps)\big|^2 \,\dd x \,\dd t
    \\
    \notag
    -& \int_{t_1}^{t_2}\int_{\torus} \sum_{i=1}^n (I{-}\nu^\eps_i\otimes \nu^\eps_i) \cdotdot \nabla^2 \varphi \, \eps |\nabla u_{\eps,i}|^2\, \dd x \,\dd t
    \\
    +& \int_{t_1}^{t_2}\int_{\torus} \partial_t \varphi \, \dd \omega^\eps_t\,\dd t
    + \int_{t_1}^{t_2}\int_{\torus} \Delta\varphi  \,\dd \xi^\eps_t  \,\dd t.
    \label{eq:epsBrakkeintegrated}
  \end{align}
  The left-hand side as well as the penultimate right-hand side term converge since $\omega^\eps_t\to \omega_t$ for all $t\geq0$, while the last right-hand side term vanishes in the limit $\eps\to0$ by Theorem~\ref{thm:discrepancy measure full}.
  The second right-hand side term is, due to Definition~\ref{def:varifold}, precisely 
  $-\int_{t_1}^{t_2}\delta \mu^\eps_t(\nabla \varphi)\,\dd t$.
  By Proposition~\ref{prop:measurability}, and since for a.e.\ $t\in (0,\infty)$, we have $\delta \mu^\eps_t(\nabla \varphi) \geq - \|\nabla^2\varphi\|_\infty \int_{\torus} \eps |\nabla \uu_\eps|^2 \,\dd x \geq -\|\nabla^2 \varphi\|_\infty 2 E_0$, we may apply Fatou's lemma to obtain
  \begin{align*}
    \liminf_{\eps\to0}\int_{t_1}^{t_2}\delta \mu^\eps_t(\nabla \varphi)\,\dd t
    \geq \int_{t_1}^{t_2} \liminf_{\eps\to0} \delta \mu^\eps_t(\nabla \varphi)\,\dd t.
  \end{align*}
  Fix $t>0$. Let $\eps=\eps(t)\to0$ denote a subsequence such that 
  \[
    \liminf_{\eps\to0} \delta \mu^\eps_t(\nabla \varphi) = \lim_{\eps(t)\to0} \delta \mu_t(\nabla \varphi).
  \] 
  By Proposition~\ref{prop:compactness of varifolds}, for a.e.\ $t>0$, we find a further subsequence $\eps'=\eps'(t)\to0$ such that $ \mu^{\eps'(t)}_t\stackrel{\ast}{\rightharpoonup} \mu_t$ and hence also $\delta\mu^{\eps'(t)}_t\stackrel{\ast}{\rightharpoonup} \delta \mu_t$. 
  Therefore,
  \begin{align}\label{eq:Brakke_transport_limit}
    \liminf_{\eps\to0}\int_{t_1}^{t_2}\delta \mu^\eps_t(\nabla \varphi) 
    \geq   \int_{t_1}^{t_2}\lim_{\eps'(t)\to0}\delta \mu^{\eps'(t)}_t(\nabla \varphi) = \int_{t_1}^{t_2} \delta \mu_t(\nabla \varphi)\,\dd t.
  \end{align}
  By Proposition~\ref{prop:7epsilon}, this is the desired transport term.
  Moreover, localizing the argument from the proof of Proposition~\ref{prop:compactness of varifolds} and applying Fatou's lemma in $t$ yields
  \begin{align}\label{eq:Brakke_shrinkage_limit}
    \liminf_{\eps\to0}\int_{t_1}^{t_2} \int_{\torus} \varphi \, \frac1\eps \big|\eps\Delta \uu_\eps{-} \partial_{\uu}F_\eps(\uu_\eps)\big|^2 \,\dd x \,\dd t 
    \geq \int_{t_1}^{t_2}\int_{\torus} \varphi |\vec H_t|^2 \,\dd \omega_t.
  \end{align}
  Applying~\eqref{eq:Brakke_transport_limit} and~\eqref{eq:Brakke_shrinkage_limit} to the remaining terms in~\eqref{eq:epsBrakkeintegrated} yields Brakke's inequality~\eqref{eq:Brakke} for the limit $\omega_t$.
\end{proof}

\appendix
\section{Gamma-convergence of the energies}
\label{sec:Gamma-convergence}
  Although the $\Gamma$-convergence is quite natural in view of the general work of Baldo~\cite{Baldo}, our setting does not quite fall into his framework. 
  Therefore, we state the main results and give short ideas of the simple proofs in this section.

  To formulate the $\Gamma$-convergence result rigorously, we first extend $E_\eps$ and $E$ to the common space $L^1$:
  \begin{equation}\label{eq:Eeps_rigorously}
    E_\eps(\uu):=
    \begin{cases}
        \int_{\torus} \big(\frac{\eps}2 |\nabla \uu|^2 + F_\eps(\uu) \big)\, \dd x, 
       & \uu\in H^1(\torus;\R^n),\\
      +\infty, 
      & \text{else},
    \end{cases}
  \end{equation}
  and
  \begin{align}\label{eq:E_rigorously}
  E(\uu) := 
  \begin{cases}
    \sigma \sum_{i=1}^n \H^{d-1}(\partial^\ast \Omega_i),& \uu = \sum_{i=1}^n \chi_{\Omega_i} \ee_i, \, (\Omega_i)_{i=1}^N\text{ a Caccioppoli partition of } \torus,\\
    +\infty, & \text{else,}
  \end{cases}
\end{align}
  for $\uu\in L^1(\torus;\R^n)$.
  \begin{proposition}[$\Gamma$-convergence]\label{prop:Gamma_convergence}
    The energies~$E_\eps$ defined in~\eqref{eq:Eeps_rigorously} $\Gamma$-converge to $E$ given by~\eqref{eq:E_rigorously} with respect to the $L^1$-topology as $\eps\to0$.
    Precisely,
    \begin{enumerate}[(i)]
      \item For any sequence $\uu_\eps \to \uu$ in $L^1(\torus;\R^n)$ as $\eps\to0$, it holds
      \[
        \liminf_{\eps\to0} E_\eps(\uu_\eps)\geq E(\uu).
      \] 
      \item For every $\uu \in L^1(\torus;\R^n)$ there exists a sequence $\uu_\eps \to \uu$ in $L^1(\torus;\R^n)$ such that
      \[
        \limsup_{\eps\to0} E_\eps(\uu_\eps)\leq E(\uu).
      \] 
    \end{enumerate}
  \end{proposition}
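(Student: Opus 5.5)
The plan is the standard Modica--Mortola/Baldo scheme, with one extra point: the penalty term $\frac1{2\eps^\alpha}(\sigma-\sum_i\phi(u_i))^2$ has to enforce the sum constraint in the liminf, yet cost nothing in the limsup.

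\emph{Liminf inequality.} Let $\uu_\eps\to\uu$ in $L^1$ with $\liminf_\eps E_\eps(\uu_\eps)<\infty$, and pass to a subsequence realizing the liminf with bounded energies. We may also truncate each component to $[0,1]$. This only lowers the Dirichlet and $W$ parts, and it does not increase the penalty because $\phi$ is monotone with $\phi(0)=0$ and $\phi(1)=\sigma$. Strictly speaking, truncation can change the penalty when several components are truncated simultaneously, so instead I would argue directly: $\int W(u_{\eps,i})\le C\eps$ forces $u_i\in\{0,1\}$ a.e. Set $w_{\eps,i}=\frac1\sigma\phi(u_{\eps,i})$, where for values outside $[0,1]$ we extend $\phi$ by the same integral formula. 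As in Proposition~\ref{prop:compactness of u}, the Modica--Mortola trick gives
\[
\frac\eps2|\nabla u_{\eps,i}|^2+\frac1\eps W(u_{\eps,i})\ge\sqrt{2W(u_{\eps,i})}\,|\nabla u_{\eps,i}|=|\nabla\phi(u_{\eps,i})|,
\]
so
\[
E_\eps(\uu_\eps)\ge \sigma\sum_i\int|\nabla w_{\eps,i}|\,\dd x.
\]
Then $w_{\eps,i}\to w_i=u_i\in\{0,1\}$ in $L^1$, using $|\phi(a)-\phi(b)|\le C|a-b|(1+|a|^3+|b|^3)$ together with equi-integrability from the energy bound; alternatively, one truncates first. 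The penalty bound $\int(1-\sum_i w_{\eps,i})^2\le C\eps^\alpha$ yields $\sum_i u_i=1$ a.e., so $\uu=\sum_i\chi_{\Omega_i}\ee_i$ with $\Omega_i=\{u_i=1\}$ a partition. Lower semicontinuity of total variation then gives $\liminf_\eps E_\eps(\uu_\eps)\ge\sigma\sum_i|\nabla\chi_{\Omega_i}|(\torus)=E(\uu)$. If $\uu$ is not of this form, the liminf must be infinite, since otherwise the above argument produces a contradiction.

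\emph{Limsup inequality.} Here we may assume $E(\uu)<\infty$. By the standard density result for Caccioppoli partitions (Braides--Conti--Garroni, or Baldo's approximation), it suffices to treat partitions whose interfaces are finite unions of polyhedral pieces, with $\sum_i P(\Omega_i)$ converging. A diagonal argument together with the lower semicontinuity of the $\Gamma$-limsup then concludes.

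\emph{Main obstacle: the recovery sequence near junctions.} For a polyhedral partition, let $q$ be the optimal one-dimensional profile, $q'=\sqrt{2W(q)}$ with $q(-\infty)=0$ and $q(\infty)=1$; we may replace it by a version cut off at distance $\eps|\log\eps|$. Define $w$-coordinates so that the sum constraint holds exactly. Near each interface $\Sigma_{ij}$, away from lower-dimensional junctions, set $u_i=q(d_{ij}/\eps)$ and $u_j=q(-d_{ij}/\eps)$, where $d_{ij}$ is the signed distance. Since $\phi(q(s))+\phi(q(-s))=\sigma$ by the symmetry of $W$, which gives $q(-s)=1-q(s)$ and $\phi(1-u)=\sigma-\phi(u)$, the penalty vanishes identically there, and each interface costs $\sigma$ per unit area. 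Near the $(d-2)$-dimensional junction set, in a tube of radius $C\eps|\log\eps|$, one must interpolate. A crude Lipschitz interpolation with $|\nabla u|\lesssim\eps^{-1}$ and bounded penalty of order $\eps^{-\alpha}$ costs
\[
\big(\eps^{-1}+\eps^{-\alpha}\big)\cdot\mathcal L^d(\text{tube})\lesssim \eps^{-1}\,(\eps|\log\eps|)^2\to0.
\]
This smallness, which comes from $\alpha<1$ and the codimension-two volume of the tube, is the step to verify carefully. Making the interpolation explicit, e.g.\ via a partition of unity in the $w$ variables followed by $u_i=\phi^{-1}(\sigma w_i)$, should be the only genuinely technical point.
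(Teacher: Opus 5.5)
Your proposal is correct. The liminf part is the paper's argument: drop the coupling, apply Modica--Mortola, and use lower semicontinuity of the total variation. You handle the sum constraint and the $L^1$-convergence of $\phi(u_{\eps,i})$ more carefully than the paper does, and the abandoned truncation remark can simply be deleted.

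For the limsup the two routes differ. The paper constructs nothing by hand. It notes that $F_\eps\le\frac1\eps G_\delta$ with $G_\delta(\uu)=\sum_i W(u_i)+\frac\delta2\big(\sigma-\sum_i\phi(u_i)\big)^2$ once $\eps^{1-\alpha}<\delta$. It then invokes Baldo's theorem for the fixed multi-well potential $G_\delta$, whose zeros are exactly $\ee_1,\dots,\ee_n$. Finally it bounds the resulting geodesic surface tension $\sigma_\delta$ by the cost of the straight segment from $\ee_1$ to $\ee_2$, which is $\sigma$. You instead build the recovery sequence explicitly for polyhedral partitions and conclude by density and lower semicontinuity of the $\Gamma$-limsup.

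Both routes rest on the identity $\phi(s)+\phi(1-s)=\sigma$. In the paper it kills the penalty along the straight segment; in fact this gives $\sigma_\delta\le\sigma$ exactly, so the $o(1)$ and the diagonal step there are superfluous. In your construction it kills the penalty identically on the two-phase layers. The paper's route is shorter, but it outsources the polyhedral density and the junction construction to Baldo, and requires matching his hypotheses and normalization. Yours is self-contained up to the density theorem on $\torus$, and it makes explicit that the coupling is free in the limit, with the junction tube costing only $O(\eps|\log\eps|^2)$.

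Two minor fixes:
\begin{enumerate}
\item In your construction $\Sigma_{ij}$ costs $2\sigma$ per unit area, not $\sigma$, because both $u_i$ and $u_j$ make a full transition. This is exactly what $E(\uu)=\sigma\sum_i P(\Omega_i)$ charges, since $\Sigma_{ij}$ enters both $P(\Omega_i)$ and $P(\Omega_j)$.
\item No constraint needs to be enforced in the junction tube. A componentwise Lipschitz extension with constant $\lesssim\eps^{-1}$, truncated to $[0,1]$, has energy density $\lesssim\eps^{-1}+\eps^{-\alpha}\lesssim\eps^{-1}$. So the partition-of-unity refinement is unnecessary, and the estimate only uses $\alpha\le1$.
\end{enumerate}
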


  \begin{proposition}[$\Gamma$-compactness]\label{prop:Gamma_compactness}
    The energies~$E_\eps$ defined in~\eqref{eq:Eeps} are $\Gamma$-compact in the $L^1$-topology.
    More precisely, any sequence $(\uu_\eps)_\eps \subset L^1(\torus;\R^n)$ with $\sup_{\eps\in (0,1]}E_\eps(\uu_\eps)<+\infty$ is pre-compact in $L^1(\torus;\R^n)$.
  \end{proposition}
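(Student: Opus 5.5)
The plan is to reduce Proposition~\ref{prop:Gamma_compactness} to the scalar compactness argument already carried out in the proof of Proposition~\ref{prop:compactness of u}, via the Modica--Mortola change of variables $w_{\eps,i}=\Phi\circ u_{\eps,i}$ with $\Phi=\frac1\sigma\phi$. One preliminary point: the $\uu_\eps$ are not assumed to take values in $[0,1]^n$. I will therefore extend $\phi$ (and hence $\Phi$) to all of $\R$ by the same integral formula $\phi(u)=\int_0^u\sqrt{2W(s)}\,\dd s$. This extended $\phi$ is strictly increasing and grows like $|u|^3$ at infinity.

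First I derive a uniform $W^{1,1}$ bound on each $w_{\eps,i}$. By the chain rule and Young's inequality, as in~\eqref{eq5.9},
\begin{align*}
\int_{\torus}|\nabla w_{\eps,i}|\,\dd x\leq \frac1\sigma\int_{\torus}\Big(\frac\eps2|\nabla u_{\eps,i}|^2+\frac1\eps W(u_{\eps,i})\Big)\dd x\leq \frac1\sigma E_\eps(\uu_\eps)\leq C.
\end{align*}
For the $L^1$ bound I use $|\phi(u)|\leq C(1+W(u))$, which holds because $\phi$ is cubic and $W$ is quartic. This gives $\int|w_{\eps,i}|\leq C(1+\eps E_\eps(\uu_\eps))\leq C$. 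Hence $(w_{\eps,i})_\eps$ is bounded in $BV(\torus)$, and by the compact embedding $BV\hookrightarrow L^1$ on the torus a subsequence converges in $L^1$ and a.e.\ to some $w_i$.

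Next I transfer this convergence back to $u_{\eps,i}=\Phi^{-1}(w_{\eps,i})$. Since $\Phi^{-1}$ is continuous, a.e.\ convergence of $w_{\eps,i}$ gives a.e.\ convergence of $u_{\eps,i}$ to $u_i:=\Phi^{-1}(w_i)$. To upgrade to $L^1$ convergence I use Vitali's theorem, so I need uniform integrability of $u_{\eps,i}$. This follows from $\int W(u_{\eps,i})\leq \eps C\to 0$ together with $|u|^4\leq C(1+W(u))$, which gives a uniform $L^4$ bound.

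Finally, the limit $\uu$ is identified as a partition. Fatou's lemma gives $\int W(u_i)=0$, so $u_i\in\{0,1\}$ a.e., and then $w_i=u_i$. The penalty term yields $\int(\sigma-\sum_i\phi(u_{\eps,i}))^2\leq 2\eps^\alpha E_\eps(\uu_\eps)\to0$, so $\sum_i w_i=1$ a.e. Together these give $\uu=\sum_i\chi_{\Omega_i}\ee_i$, which is more than the proposition asks for.

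The only mildly delicate step is the uniform integrability needed to pass from convergence of $w_{\eps,i}$ to $L^1$ convergence of $u_{\eps,i}$ when the values are unbounded. The quartic growth of $W$ handles this, so I expect no real obstacle.
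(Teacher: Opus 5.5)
Your proposal is correct and follows the same route as the paper. The paper's proof invokes the Modica--Mortola bound from Step~1 to get $BV$-precompactness of $\phi\circ u_{\eps,i}$ and then says the compactness of $\uu_\eps$ ``can easily be deduced''; you supply exactly that deduction, using continuity of $\Phi^{-1}$ on $\R$ for a.e.\ convergence, and the uniform $L^4$ bound from $|u|^4\leq C(1+W(u))$ together with Vitali's theorem for $L^1$ convergence. The only step left implicit is the chain rule for the non-Lipschitz map $\Phi$ applied to $H^1$ functions, which follows from a routine truncation argument.
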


  \begin{proof}[Idea of proof for Proposition~\ref{prop:Gamma_convergence}]
    \step{Step 1: Lower bound.} If $\uu$ is such that $E(\uu) <+\infty$, then $\uu=\sum_{i=1}^n \chi_{\Omega_i} \ee_i$ for a Caccioppoli partition $(\Omega_i)_{i=1,\ldots,n}$. 
    In particular, each $\Omega_i$ is a set of finite perimeter. 
    By dropping the non-negative coupling term between the components and then applying the Modica-Mortola~\cite{ModicaMortola}/Bogomolnyi~\cite{Bogomolnyi} trick, we obtain
    \begin{align*}
      E_\eps(\uu_\eps) 
      \geq \sum_{i=1}^n \int_{\torus} \Big( \frac\eps2 |\nabla u_{\eps,i}|^2 + \frac1\eps W(u_{\eps,i})\Big)\,\dd x
      &\geq \sum_{i=1}^n \int_{\torus} 2\sqrt{W(u_{\eps,i})} |\nabla u_{\eps,i}| \,\dd x
      \\&= \sum_{i=1}^n \int_{\torus}  |\nabla (\phi \circ u_{\eps,i})| \,\dd x.
    \end{align*}
    By the lower semi-continuity of the $BV$-seminorm w.r.t.\ $L^1$-convergence and since $\phi(u_i) = \sigma \chi_{\Omega_i}$, we get
    \[
      \liminf_{\eps\to0}E_\eps(\uu_\eps) 
      \geq  \sum_{i=1}^n \int_{\torus}  |\nabla (\phi \circ u_{i})| \,\dd x
      = \sigma \sum_{i=1}^n \H^{d-1}(\partial^\ast \Omega_i) = E(\uu).
    \]
    In the remaining case of $E(\uu)=+\infty$ it is easy to see that $\lim_{\eps\to0} E_\eps(\uu_\eps) =+\infty$.
    The only additional case compared to the literature is if the sum constraint $\sum_{i=1}^n u_i=1$ is violated, but the constraint $\uu \in \{\ee_1,\ldots,\ee_n\}$ is respected. 
    Then the coupling term $\int_{\torus}\frac1{\eps^\alpha}\big(\sigma - \sum_{i=1}^n \phi(u_{\eps,i})\big)^2 \, \dd x$ diverges.

    \step{Step 2: Upper bound.} 
    Note that for any $\uu\colon \torus\to \R^n$ and for $\eps,\delta>0$ such that $\eps^{1-\alpha} <\delta$ we have
    \begin{equation}\label{eq:Gdelta}
      F_\eps(\uu) \leq \frac1\eps G_\delta(\uu) \quad \text{with} \quad 
      G_\delta(\uu) := \sum_{i=1}^n W(u_i) + \frac\delta{2} \Big( \sigma - \sum_{i=1}^n \phi (u_i)\Big)^2.
    \end{equation}
    Then $G_\delta$ is a regular multi-well potential with zeros $\{\ee_1,\ldots, \ee_n\}$, and we define 
    \begin{equation}\label{eq:Eeps_rigorously_delta}
      \tilde E_\eps^{(\delta)}(\uu):=
      \begin{cases}
       \int_{\torus} \big(\frac{\eps}2 |\nabla \uu|^2 + \frac1\eps G_\delta(\uu) \big)\, \dd x,
       & \uu\in H^1(\torus;\R^n),\\
      +\infty, 
      & \text{else,}
    \end{cases}
  \end{equation}
  for $\uu \in L^1(\torus;\R^n)$. Then $E_\eps(\uu) \leq E_\eps^{(\delta)}(\uu)$ for any $\uu \in L^1(\torus;\R^d)$.
  
  By Baldo's work~\cite{Baldo}, for any $\delta>0$ and for any $\uu \colon \torus \to \R^n$ there exists a recovery sequence $\uu_{\eps}^{(\delta)}$ of $\tilde E_\eps^{(\delta)}$, i.e.,
  \[
    \limsup_{\eps\to 0 } \tilde E_\eps^{(\delta)}(\uu_\eps^{(\delta)}) \leq \tilde E^{(\delta)}(\uu),
  \] 
  where the limit energy is given by\footnote{Carefully observe the different normalization in~\cite{Baldo}.
  More precisely, Baldo's energy in~\cite{Baldo} is defined as $\hat E_\eps(\uu) = \int\big( \eps |\nabla \uu|^2 + \frac1\eps \hat F(\uu)\big)\,\dd x$, and he shows $\hat E_\eps \to \hat E$ for $\hat E(\uu):= \sum_{i,j} d(\ee_i,\ee_j) \H^{d-1}(\partial^\ast \cap \partial^\ast \Omega_j)$ with $d(\ee_i,\ee_j) = \inf \int \sqrt{\hat F(\gamma)} |\gamma'|$. 
  Translated to our setting, $\tilde E_\eps^{(\delta)}=\frac12 \hat E_\eps$ with $\hat F = 2G_\delta$, Baldo's result indeed yields $\tilde E^{(\delta)}(\uu) = \frac12 \hat E (\uu)$ with $\sigma_\delta(\ee_i,\ee_j) =\frac12  d(\ee_i,\ee_j) = \inf_\gamma \sqrt{\frac12 G_\delta(\gamma)}|\gamma'|$.}
  \begin{equation}\label{eq:Edelta}
      \tilde E^{(\delta)}(\uu):=
      \begin{cases}
        \sum_{i,j=1}^n \sigma_{\delta}(\ee_i,\ee_j) \H^{d-1}(\partial^\ast \Omega_i\cap \partial^\ast \Omega_j),& \uu = \sum_{i=1}^n \chi_{\Omega_i} \ee_i, \, (\Omega_i)_{i=1}^N\text{ Cacc.\ partition},\\
        +\infty, & \text{else},
      \end{cases}
  \end{equation}
  with surface tension
  \begin{equation}
    \sigma_{\delta}(\ee_i,\ee_j)  := \inf\bigg\{ \int_0^1  \sqrt{\tfrac12G_\delta(\gamma)} |\gamma'|\,\dd s \bigg| 
    \text{$\gamma$ a $C^1$-curve connecting $\ee_i$ and $\ee_j$} \bigg\}.
  \end{equation}
  By symmetry, in our case, $\sigma_{\delta}(\ee_i,\ee_j)=:\sigma_\delta$ is independent of $i\neq j$.
  Moreover, since $G_\delta(\uu) \to G_0(\uu) = \sum_{i=1}^n W(u_i)$ as $\delta\to0$, letting $\gamma(s):= (1-s)\ee_1+s\ee_2$ denote the straight path between $\ee_1$ and $\ee_2$, we can estimate
  \begin{align*}
    \sigma_\delta 
    \leq \int_0^1  \sqrt{\tfrac12 G_\delta (\gamma(s))}|\gamma'(s)|\, \dd s
    &\leq \int_0^1  \sqrt{\tfrac12 G_0(\gamma(s))} |\gamma'(s)|\, \dd s + o(1)
  \end{align*}
  as $\delta \to0$.
  We compute for this particular path and our symmetric double-well potential $W$,
  \begin{align*}
    \int_0^1 \sqrt{\tfrac12 G_0(\gamma(s))} |\gamma'(s)|\, \dd s 
    = \int_0^1 \sqrt{\tfrac12 (W(s)+W(1-s))} \sqrt{2}\,\dd s
    = \int_0^1 \sqrt{ 2W(s)} \,\dd s
    = \sigma.
  \end{align*}
  Therefore,
  \begin{equation}
    \sigma_\delta \leq \sigma + o(1) \quad \text{as }\delta\to0,
  \end{equation}
  and hence, for any $\uu\in L^1(\torus;\R^d)$, 
  \begin{equation*}
      \limsup_{\delta\to0}\tilde E^{(\delta)}(\uu)\leq E(\uu).
  \end{equation*}
  Selecting a suitable diagonal sequence $\uu_\eps = \uu_\eps^{(\delta_\eps)}$, we obtain the tight upper bound
  \[
    \limsup_{\eps\to 0 } \tilde E_\eps^{(\delta)}(\uu_\eps) 
    \leq  \limsup_{\eps\to 0 } \tilde E_\eps^{(\delta)}(\uu_\eps^{(\delta_\eps)}) 
    \leq \tilde E(\uu),
  \]
  which concludes the proof.  
  \end{proof}

  \begin{proof}[Proof of Proposition~\ref{prop:Gamma_compactness}]
    Using Step 1 from the above proof, we see that with uniform energy bounds, the functions $\phi \circ u_{\eps,i}$ are pre-compact in $L^1$ and one can easily deduce the pre-compactness of $\uu$ in $L^1$.
  \end{proof}

\section{Construction of initial data}
\label{sec:initial_data}
In the next proposition, we construct well-prepared initial conditions. 
Loosely speaking, this is done by gluing the optimal transition profile $q^\eps$ around the interfaces. Here, 
 $q^\varepsilon (s) := \frac{e^{\sqrt{2} s/\varepsilon }}{1 + e^{\sqrt{2} s/\varepsilon}}$. Note that $q^\eps$ satisfies the equi-partition of energy $\frac{\varepsilon ((q^\varepsilon)')^2 }{2} = \frac{W(q^\varepsilon)}{\varepsilon}$, as well as the boundary conditions $q^\varepsilon (-\infty) =0$, and $q^\varepsilon (\infty) =1$, and the symmetry $q^\varepsilon (0)=\frac12$.
\begin{proposition}\label{prop7}
Assume that a family of open sets $E_1, E_2,\dots, E_n \subset \mathbb{T}^d$ satisfies the following:
\begin{enumerate}[(i)]
\item $\mathbb{T}= \cup_{i=1} ^n \overline{E_i}$ and if $i\not =j$ then $E_i \cap E_j =\emptyset$. 
\item Let $d_i $ be a singed distance function of $E_i$ with $d_i (x) > 0$ for any $x \in E_i$. For any $i=1,\dots,n$, $E_i$ is a set of finite perimeter and $|\mathcal{H}^{d-1} (\partial ^\ast E_i) -\mathcal{H}^{d-1} (\Gamma _i ^s) | \to 0$ as $s\to 0$, where $\Gamma _i ^s = \{ x \mid d_i (x)=s \}$. In addition, there exists $C>0$ such that $\mathcal{H}^{d-1} (\Gamma _i ^s) \leq C$ for a.e. $s \in \R$.
\item There exists $C>0$ such that for any $i=1,\dots,n$, $x \in \mathbb{T}^d$, and for a.e. $s \in \R$, $\mathcal{H}^{d-1} (\Gamma _i ^s \cap B_r (x)) \leq Cr^{d-1}$.
\item Set $N_i ^\varepsilon =\{ x \mid |d_i (x)| \leq \varepsilon^{\frac{1+\alpha}{2}} \}$. Then there exists $C>0$ such that $\mathcal{L}^d (N_ i ^\varepsilon )\leq C \varepsilon^\frac{1+\alpha}{2}$ for any $\varepsilon >0$ and for any $i=1,\dots,n$.
\end{enumerate}
Set $u_{0,\varepsilon,i} (x) := q^{\varepsilon} (d_i (x))$. Then 
\begin{enumerate}[(i)]
\item \label{label_initial1} $0\leq u_{0,\varepsilon,i} (x) \leq 1$ for any $x \in \mathbb{T}^d$ and for any $i=1,\dots,n$.
\item \label{label_initial2} $\lim_{\varepsilon \to 0} u_{0,\varepsilon,i} (x) = \chi _{E_i} (x)$ for a.e. $x \in \mathbb{T}^d$.
\item \label{label_initial3} $\varepsilon |\nabla u_{0,\varepsilon,i} (x)| \leq C$ for a.e. $x \in \mathbb{T}^d$ and for any $i=1,\dots,n$.
\item \label{label_initial4} $\displaystyle \frac{\varepsilon |\nabla u_{0,\varepsilon,i} (x)|^2 }{2} = \frac{W(u_{0,\varepsilon,i})}{\varepsilon}$ for a.e. $x \in \mathbb{T}^d$.
\item \label{label_initial5} We have
\begin{equation}
\omega_0 ^\varepsilon \stackrel{\ast}{\rightharpoonup} \sigma \sum_{i=1} ^n \mathcal{H}^{d-1} \llcorner \partial ^\ast E_i.
\end{equation}
\item \label{label_initial6} We have $D_0 <\infty$, where $D_0$ is defined in Definition \ref{def:well-prepared}.
\end{enumerate}
\end{proposition}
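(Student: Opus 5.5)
Most items follow from explicit computations with the profile $q^\eps$ composed with the signed distance $d_i$, which is $1$-Lipschitz with $|\nabla d_i|=1$ a.e. I would verify (i)--(iv) first, then prove the energy convergence (v) by the coarea formula, and finally the density bound (vi).

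For (i), note that $0<q^\eps<1$. For (ii), if $x\in E_i$ then $d_i(x)>0$ and $q^\eps(d_i(x))\to1$; if $x\notin\overline{E_i}$ then $d_i(x)<0$ and the limit is $0$. The boundary $\partial E_i$ is $\mathcal L^d$-null by hypothesis (iv), since $\mathcal L^d(N_i^\eps)\to0$. For (iii), $\eps|\nabla u_{0,\eps,i}|=\eps (q^\eps)'(d_i)|\nabla d_i|\le \eps\sup (q^\eps)'=\sqrt2/4$. For (iv), use $|\nabla d_i|=1$ a.e. together with the equipartition $\frac\eps2((q^\eps)')^2=\frac1\eps W(q^\eps)$; the vanishing discrepancy then gives Definition~\ref{def:well-prepared}(iv) trivially.

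For (v), I first show the coupling term vanishes. On $\bigcap_i(\torus\setminus N_i^\eps)$, exactly one $d_i$ is positive, since the $E_i$ partition $\torus$ up to boundaries. There $|d_j|\ge\eps^{\beta}$ with $\beta=\frac{1+\alpha}2$, so $u_j$ is exponentially close to $\delta_{ij}$, namely within $e^{-\sqrt2\eps^{\beta-1}}$. Hence $\sigma-\sum_j\phi(u_j)=O(e^{-c\eps^{\beta-1}})$, and this region contributes $o(1)$ even after multiplying by $\eps^{-\alpha}$. On $\bigcup_i N_i^\eps$ the integrand is bounded by $C\eps^{-\alpha}$, and by (iv) this region contributes at most $C\eps^{\beta-\alpha}=C\eps^{\frac{1-\alpha}2}\to0$. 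Here $\beta>\alpha$ is exactly what is needed, and I expect this to be the main delicate point.

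For the scalar parts, coarea with $|\nabla d_i|=1$ gives
\[
\int\varphi\Big(\tfrac\eps2|\nabla u_i|^2+\tfrac1\eps W(u_i)\Big)\dd x=\int_\R \eps((q^\eps)'(s))^2\int_{\Gamma_i^s}\varphi\,\dd\mathcal H^{d-1}\dd s .
\]
Since $\eps((q^\eps)')^2\,\dd s$ is an approximate identity of mass $\sigma$ concentrating at $s=0$, I need $\int_{\Gamma_i^s}\varphi\to\int_{\partial^*E_i}\varphi$ as $s\to0$. Lower semicontinuity gives the $\liminf$ inequality (the sets $\{d_i>s\}\to E_i$ in $L^1$). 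Combined with the convergence of total masses in (ii), this upgrades to weak convergence $\mathcal H^{d-1}\llcorner\Gamma_i^s\rightharpoonup\mathcal H^{d-1}\llcorner\partial^*E_i$, by the standard argument that a liminf inequality for every $0\le\varphi$ and $1-\varphi$ plus mass convergence gives convergence. The tails $|s|$ large are controlled by the uniform bound $\mathcal H^{d-1}(\Gamma_i^s)\le C$.

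For (vi), I apply the same coarea formula with $\varphi=\chi_{B_r(x)}$ and hypothesis (iii), which bounds the scalar parts by $\sigma Cr^{d-1}$. For the coupling part, I would use the pointwise bound $\eps^{-\alpha}(\cdot)^2\le C\eps^{-\alpha}\sum_i e^{-c|d_i|/\eps}$, which follows from the exponential closeness above. Applying coarea together with (iii) gives a bound of $C\eps^{1-\alpha}r^{d-1}$. This last exponential estimate is where I expect the most care to be needed.
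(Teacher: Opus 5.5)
Your proposal is correct and follows the paper's proof. It uses the same coarea computation, with $|\nabla d_i|=1$ a.e.\ and equipartition, for the scalar parts in (v) and (vi), and the same split of the coupling term into the slab $\bigcup_i N_i^\eps$ (contributing $C\eps^{\frac{1-\alpha}{2}}$) and its complement (exponentially small) in (v). The differences are minor: you justify $\mathcal{H}^{d-1}\llcorner\Gamma_i^s \stackrel{\ast}{\rightharpoonup} \mathcal{H}^{d-1}\llcorner\partial^\ast E_i$ via lower semicontinuity of perimeter plus mass convergence, which the paper only uses implicitly, and in (vi) you replace the paper's slab/complement split by the single pointwise bound $C\eps^{-\alpha}\sum_i e^{-c|d_i|/\eps}$, which is equally valid and slightly cleaner.
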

\begin{proof}
First, (\ref{label_initial1}) and (\ref{label_initial2}) are clear from the definition of $u_{0,\varepsilon,i}$.
Since $|\nabla d_i (x)| = 1$ for a.e.~$x \in \mathbb{T}^d$ and $\frac{\varepsilon ((q^\varepsilon)')^2 }{2} = \frac{W(q^\varepsilon)}{\varepsilon}$, we have (\ref{label_initial3}) and (\ref{label_initial4}). 
We prove (\ref{label_initial5}) in the following. For any $\varphi \in C (\mathbb{T}^d)$, we compute
\begin{equation}\label{label_initial_eq1}
\begin{split}
& \int_{\torus} \varphi \left(\frac{\varepsilon |\nabla u_{0,\varepsilon,i} (x)|^2 }{2} + \frac{W(u_{0,\varepsilon,i})}{\varepsilon} \right) \, \dd x 
=\int_{\torus} \varphi \left( |\nabla u_{0,\varepsilon,i} (x)| \sqrt{2W(u_{0,\varepsilon,i})} \right) \, \dd x \\
& \, = \int_0 ^1 \sqrt{2W(s)} \int_{\{ x \mid u_{0,\varepsilon,i} (x) =s \} } \varphi \, \dd \mathcal{H}^{d-1} \dd s
= \int_0 ^1 \sqrt{2W(s)} \int_{ \Gamma_i ^{(q^{\varepsilon })^{-1} (s)} } \varphi \, \dd \mathcal{H}^{d-1} \dd s \\
& \to \sigma \int _{ \partial^\ast E_i } \varphi \, \dd \mathcal{H}^{d-1},
\end{split}
\end{equation}
where we used the co-area formula, $|\mathcal{H}^{d-1} (\partial ^\ast E_i) -\mathcal{H}^{d-1} (\Gamma _i ^s) | \to 0$ and $\mathcal{H}^{d-1} (\Gamma _i ^s) \leq C$. By \eqref{label_initial_eq1}, we only need to show that the coupling-term in the energy vanishes:
\begin{equation}\label{label_initial_eq2}
\int_{\torus}\frac{1}{2\varepsilon ^\alpha} \left( \sigma - \sum_{i=1} ^n \phi (u_{0,\varepsilon,i}) \right)^2 \, \dd x \longrightarrow 0.
\end{equation}
On the one hand we estimate 
\begin{equation*}
\begin{split}
\int_{N_i ^\varepsilon}\frac{1}{2\varepsilon ^\alpha} \left( \sigma - \sum_{i=1} ^n \phi (u_{0,\varepsilon,i}) \right)^2 \, \dd x \leq C \varepsilon ^{\frac{1+\alpha}{2}-\alpha}
= C \varepsilon ^{\frac{1-\alpha}{2}}  \to 0,
\end{split}
\end{equation*}
where we used $\mathcal{L}^d (N_ i ^\varepsilon )\leq C \varepsilon^{\frac{1+\alpha}{2}}$ and $0<\alpha<1$.
On the other hand, we claim that for sufficiently small $\eps>0$,
\begin{equation}\label{eq:initial conditions in bulk}
  \Big| \sigma - \sum_{i=1} ^n \phi (u_{0,\varepsilon,i}) \Big| \leq \eps^{\alpha} \quad \text{in }\mathbb{T}^d \setminus \Big(\bigcup_{i=1} ^n N_i ^\varepsilon\Big).
\end{equation}
In fact, if $x \in \mathbb{T}^d \setminus (\bigcup_{i=1} ^n N_i ^\varepsilon)$ satisfies $x \in E_i$ and $x \not \in E_j$, then we have 
\[
  1 \geq u_{0,\varepsilon,i} (x) \geq \frac{e ^{\sqrt{2} \varepsilon ^{\frac{1+\alpha}{2}-1}}}{1+ e ^{\sqrt{2} \varepsilon ^{\frac{1+\alpha}{2}-1}}} 
  = \frac{e ^{\sqrt{2} \varepsilon ^{- \frac{1-\alpha}{2}}}}{1+ e ^{\sqrt{2} \varepsilon ^{- \frac{1-\alpha}{2}}}}
  \to 1
\]
and 
\[
  0 \leq u_{0,\varepsilon,j} (x) \leq \frac{e ^{-\sqrt{2} \varepsilon ^{\frac{1+\alpha}{2}-1}}}{1+ e ^{-\sqrt{2} \varepsilon ^{\frac{1+\alpha}{2}-1}}}
=\frac{e ^{-\sqrt{2} \varepsilon ^{-\frac{1-\alpha}{2}}}}{1+ e ^{-\sqrt{2} \varepsilon ^{ -\frac{1-\alpha}{2}}}}
\to 0,
\]
where both convergences are exponentially fast as $\eps\to 0$.
Hence, in particular \eqref{eq:initial conditions in bulk} holds true, and we obtain
\begin{equation*}
\int_{\mathbb{T}^d \setminus (\bigcup_{i=1} ^n N_i ^\varepsilon)}\frac{1}{2\varepsilon ^\alpha} \left( \sigma - \sum_{i=1} ^n \phi (u_{0,\varepsilon,i}) \right)^2 \dd x \longrightarrow 0.
\end{equation*}
Therefore we obtain \eqref{label_initial_eq2} and thus (\ref{label_initial5}).

Finally we prove (\ref{label_initial6}). Let $x_0 \in \torus$ and $r\in (0,1]$ be arbitrary.
By a similar calculation to \eqref{label_initial_eq1}, we obtain
\begin{equation}\label{label_initial_eq3}
\begin{split}
 \int_{B_r (x_0)} \left(\frac{\varepsilon |\nabla u_{0,\varepsilon,i} (x)|^2 }{2} + \frac{W(u_{0,\varepsilon,i})}{\varepsilon} \right) \, \dd x 
  &= \int_0 ^1 \sqrt{2W(s)} \mathcal{H}^{d-1} \big(B_r (x_0) \cap \Gamma_i ^{(q^{\varepsilon })^{-1} (s)} \big) \,\dd s 
  \\
&\leq   C \sigma r^{d-1},
\end{split}
\end{equation}
where we used $\mathcal{H}^{d-1} (B_r (x_0) \cap \Gamma_i ^{(q^{\varepsilon })^{-1} (s)} ) \leq C r^{d-1}$.
Similarly, by the coarea formula, we have
\begin{equation}\label{label_initial_eq4}
\begin{split}
& \int_{N_i ^\varepsilon \cap B_r (x_0)} \frac{1}{2\varepsilon ^\alpha} \left( \sigma - \sum_{i=1} ^n \phi (u_{0,\varepsilon,i}) \right)^2 \, \dd x \\
= & \, \int_{-\varepsilon ^{\frac{1+\alpha}{2}}} ^{\varepsilon ^{\frac{1+\alpha}{2}}}
\int_{\Gamma _i ^s \cap B_r (x_0)} \frac{1}{2\varepsilon ^\alpha} \left( \sigma - \sum_{i=1} ^n \phi (u_{0,\varepsilon,i}) \right)^2 \, \dd \mathcal{H}^{d-1} \dd s 
\leq C \varepsilon ^{\frac{1-\alpha}{2}} r^{d-1} \leq Cr^{d-1},
\end{split}
\end{equation}
where we used $\alpha \in (0,\frac12)$. In addition, by \eqref{eq:initial conditions in bulk}, 
\begin{equation}\label{label_initial_eq5}
\begin{split}
& \int_{B_r (x_0) \setminus (\bigcup_{i=1} ^n N_i ^\varepsilon)} \frac{1}{2\varepsilon ^\alpha} \left( \sigma - \sum_{i=1} ^n \phi (u_{0,\varepsilon,i}) \right)^2 \, \dd x 
=\int_{B_r (x_0) \setminus (\bigcup_{i=1} ^n N_i ^\varepsilon)} \frac{\varepsilon ^\alpha}{2} \, \dd x
\\
\leq & \, \frac{\varepsilon ^\alpha}{2}
\sum_{i=1} ^n \int_{ \varepsilon ^{\frac{1+\alpha}{2}}} ^{\sqrt{d}} \mathcal{H}^{d-1} (\Gamma _i ^s \cap B_r (x_0)) \dd s 
\leq C \varepsilon ^{\alpha} r^{d-1} \leq Cr^{d-1}.
\end{split}
\end{equation}
By \eqref{label_initial_eq3}, \eqref{label_initial_eq4}, and \eqref{label_initial_eq5}, we obtain the claim.
\end{proof}

\begin{remark}\label{rem:initialconditions}
The function given in Proposition \ref{prop7} is a Lipschitz function, and by applying a suitable mollifier to it, one can obtain the well-prepared initial data in Definition \ref{def:well-prepared}. As for the condition regarding the upper bound of the density, we briefly mention the reason below.
Set $u_{0,\varepsilon,i} ^\delta := \eta_\delta \ast u_{0,\varepsilon,i}$, where $\eta_\delta \in C_ c ^\infty (B_\delta (0))$ is the standard mollifier. By the Cauchy--Schwarz inequality and $\int \eta_\delta \, \dd x =1$,
\[
\int_{B_r (x_0)} \frac{\varepsilon |\nabla u_{0,\varepsilon,i} ^\delta (x)|^2 }{2} \, \dd x
\leq \int_{B_r (x_0)} \int_{B_\delta (x)} 
\frac{\varepsilon |\nabla u_{0,\varepsilon,i} (y) |^2 }{2} \eta_\delta (x-y) \, \dd y \, \dd x =:J.
\]
If $0<r \leq \delta$, by $\eta_\delta \leq C\delta^{-d}$ we have 
\[
J\leq \frac{C}{\delta ^d}  \int_{B_r (x_0)} \int_{B_\delta (x)} 
\frac{\varepsilon |\nabla u_{0,\varepsilon,i} (y) |^2 }{2} \, \dd y\, \dd x
\leq 
\frac{C}{\delta ^d}  \int_{B_r (x_0)} D_0 \delta^{d-1} \, \dd x = C D_0 \frac{r}{\delta} r^{d-1}\leq Cr^{d-1}.
\]
Note that $D_0 <\infty$ holds for $u_{0,\varepsilon,i}$. If $0<\delta \leq r$, by Fubini's theorem and $\int \eta_\delta \, \dd x =1$,
\[
J\leq \int_{B_{r+\delta} (x_0)} 
\frac{\varepsilon |\nabla u_{0,\varepsilon,i} (y) |^2 }{2} \, \dd y 
\leq D_0 (r+\delta)^{d-1} \leq 2^{d-1} D_0 r^{d-1}.
\]
Since $W$ is Lipschitz, 
$| W(u_{0,\varepsilon,i} ^\delta) - W(u_{0,\varepsilon,i})| 
\leq L|u_{0,\varepsilon,i} ^\delta - u_{0,\varepsilon,i} |
\leq L \| \nabla u_{0,\varepsilon,i} \|_{L^\infty} \delta \leq L C\delta \varepsilon ^{-1}$. Hence,
\[
\left| 
\int_{B_r (x_0)} \frac{W(u_{0,\varepsilon,i} ^\delta)}{\varepsilon} - \frac{W(u_{0,\varepsilon,i})}{\varepsilon} \, \dd x
\right|
\leq \frac{CL \delta }{\varepsilon^2} r^{d} \leq C r^{d-1}
\]
holds if $\delta \leq \varepsilon ^2$. A similar estimate holds for the remaining potential energy.
\end{remark}

\section*{Acknowledgments}
T.L.\ acknowledges support from the Deutsche Forschungsgemeinschaft (DFG, German Research
Foundation) under Germany's Excellence Strategy EXC 2181/1 - 390900948 (the Heidelberg
STRUCTURES Excellence Cluster). 
K.T.\ is supported by JSPS KAKENHI Grant Numbers
JP23K03180, JP23H00085, JP24K00531, and JP25K00918.

\printbibliography
\end{document}